\newtheorem{thm}{Theorem}[section]
\newtheorem{lemma}[thm]{Lemma}
\newtheorem{theorem}[thm]{Theorem}
\newtheorem{proposition}[thm]{Proposition}
\newtheorem{corollary}[thm]{Corollary}%%%[section]
\newtheorem*{corollary*}{Corollary}
\theoremstyle{definition}
\newtheorem{definition}[thm]{Definition}
\newtheorem{remark}[thm]{Remark}
\newtheorem{notation}[thm]{Notation}
\newtheorem{example}[thm]{Example}
\numberwithin{equation}{section}
\newcommand{\cG}{{\mathcal G}}
\newcommand{\cU}{{\mathcal U}}
\newcommand{\R}{\mathbb{R}}
\newcommand{\C}{\mathbb{C}}
\newcommand{\Z}{\mathbb{Z}}
\newcommand{\E}{\mathbb{E}}
\newcommand{\inv}{^{-1}}
\newcommand{\toto}{\rightrightarrows}
\DeclareMathOperator{\graff}{graph}
\DeclareMathOperator{\Hom}{Hom}
\newcommand{\pr}{\operatorname{pr}}
\newcommand{\Man} {\textsf{Man}}
\newcommand{\Bi} {\textsf{Bi}}
\newcommand{\St} {\textsf{St}}
\newcommand{\CFG} {\textsf{CFG}}
\newcommand{\Gr} {\textsf{Gpoid}}
\newcommand{\Gp} {\textsf{Gp}}
\newcommand{\HS} {\textsf{HS}}
\newcommand{\Mat} {\textsf{Mat}}
\newcommand{\Vect} {\textsf{Vect}}
\newcommand{\Cat} {\textsf{Cat}}
\newcommand{\triv}{_\textsf{triv}}
\newcommand{\under}[1] {\underline{#1}}
\newcommand{\sfC}{{\textsf C}}
\newcommand{\sfD}{{\textsf D}}
\newcommand{\sfE}{{\textsf E}}
\begin{document}
\title{Orbifolds as stacks?}

\author{  Eugene Lerman}\thanks{Supported in
part by NSF grant DMS-0603892}

\address{Department of Mathematics, University of Illinois, Urbana, IL 61801}

\email{lerman@math.uiuc.edu}

\begin{abstract}
The first goal of this survey paper is to argue that if orbifolds are
groupoids, then the collection of orbifolds and their maps has to be
thought of as a 2-category.  Compare this with the classical
definition of Satake and Thurston of orbifolds as a 1-category of sets
with extra structure and/or with the ``modern'' definition of
orbifolds as proper etale Lie groupoids up to Morita equivalence.

The second goal is to describe two complementary ways of thinking of
orbifolds as a 2-category:
\begin{enumerate}
\item the weak 2-category of foliation Lie groupoids, bibundles and equivariant maps between bibundles  and
\item the strict 2-category of Deligne-Mumford stacks over the category of smooth manifolds.
\end{enumerate}

\end{abstract}
\maketitle

\tableofcontents

\section{Introduction}
Orbifolds are supposed to be generalizations of manifolds.  While
manifolds are modeled by open balls in the Euclidean spaces, orbifolds
are supposed to be modeled by quotients of open balls by linear
actions of finite groups.  Orbifolds were first defined in the 1950's
by Satake \cite{satake1, satake2}.  The original definition had a number of problems.  The
chief problem was the notion of maps of orbifolds: different papers
of Satake had different definitions of maps and it was never clear if
maps could be composed.  Additionally
\begin{enumerate}
\item The group actions  were required to be effective (and there was a
spurious condition on the codimension of the set of singular points).
The requirement of effectiveness created a host of problems: there
were problems in the definition of suborbifolds and of vector (orbi-)bundles over the orbifolds.  A quotient of a manifold by
a proper locally free action of a Lie group was not necessarily an
orbifold by this definition.
\item  There were problems with pullbacks of
vector (orbi-)bundles --- it was not defined for all maps.
\end{enumerate}

Over the years various patches to the definition have been proposed.
See, for example, Chen and Ruan \cite{ChenRuan}, Haefliger\cite{h1,
h2} , Moerdijk \cite{moer}, Moerdijk and Pronk \cite{MoerPronk}.  In
particular Moerdijk's paper on orbifolds as groupoids has been quite
influential among symplectic topologists.  At about the same time the
notion that orbifolds are Deligne-Mumford/geometric stacks over the
category of manifolds started to be mooted.\\

There are two points to this paper.
\begin{enumerate}
\item If one thinks of orbifolds as groupoids then orbifolds have to be
treated as a 2-category: it is not enough to have maps between
groupoids, one also has to have maps between maps.  This point is not
new; I have learned it from \cite{hm}. Unfortunately it has not been
widely  accepted, and it bears repetition.

\item  There are two complementary ways of thinking of orbifolds as a 
2-category.  One way uses bibundles as maps.  The other requires
embedding Lie groupoids into the 2-category of stacks.  Since stacks
and the related mental habits are not familiar to many differential
geometers I thought it would be useful to explain what stacks are.
While there are several such introductions already available
\cite{Metzler, BXu, heinloth}, I feel there is room for one more, especially 
for the one with the emphasis on ``why.''
\end{enumerate}

I will now outline the argument for thinking of orbifolds as a
2-category (the possibly unfamiliar terms are defined in subsequent
sections).  The simplest solution to all of the original problems with
Satake's definition is to start afresh.  We cannot glue together group
actions, but we can glue together action groupoids.  Given an action
of a finite group, the corresponding groupoid is etale and proper.
This leads one to think of a $C^\infty$ orbifold (or, at least, of an
orbifold atlas) as a proper etale Lie groupoid.  The orbit spaces of
such groupoids are Hausdorff, and locally these groupoids look like
actions groupoids for linear actions of finite groups.  Since a
locally free proper action of a Lie group on a manifold should give
rise to an orbifold, limiting oneself to etale groupoids is too
restrictive. A better class of groupoids consists of Lie groupoid
equivalent to proper etale groupoids.  These are known as foliation
groupoids.

If orbifolds are Lie groupoids, what are maps?  Since many geometric
structures (metrics, forms, vector fields etc) are sections of vector
bundles, hence maps, one cannot honestly do differential geometry on
orbifolds without addressing this question first.

Since groupoids are categories, their morphisms are functors.  But our
groupoids are smooth, so we should require that the functors are
smooth too (as maps on objects and arrows).  One quickly discovers
that these morphisms are not enough.  The problem is that there are
many smooth functors that are equivalences of categories and that have
no smooth inverses.  So, at the very least, we need to formally invert
these smooth equivalences.  But groupoids and functors are not just a
category; there are also natural transformations between functors.
This feature is dangerous to ignore for two reasons.  First of all, it
is ``widely known'' that the space of maps between two orbifolds is
some sort of an infinite dimensional orbifold.  So if one takes the
point of view that orbifolds are groupoids, then the space of maps
between two orbifolds should be a groupoid and not just a set.  The
most natural groupoid structure comes from natural transformations
between functors.  There are other ways to give the space of maps
between two orbifolds the structure of a groupoid, but I don't find
these approaches convincing.

The second reason has to do with gluing maps.  Differential geometers
glue maps all the time.  For example, when we integrate a vector field
on a manifold, we know that a flow exists locally by an existence
theorem for a system of ODEs.  We then glue together these local flows
to get a global flow.  However, if we take the category of Lie
groupoids, identify isomorphic functors and then invert the
equivalences (technically speaking we {\em localize at the
equivalences}), the morphisms in the resulting category will not be
uniquely determined by their restrictions to elements of an open
cover.  We will show that {\bf any} localization of the category of
groupoid will have this feature, regardless of how it is constructed! 
See Lemma~\ref{lem:trouble} below.

Having criticized the classical and ``modern'' approaches to
orbifolds, I feel compelled to be constructive.  I will describe two
{\em geometrically} compelling and complementary ways to  localize
Lie groupoids at equivalences as a 2-category.  These are: 
\begin{enumerate}
\item
replace functors by bibundles and natural transformations by
equivariant maps of bibundles or 
\item  embed groupoids into the
2-category of stacks.  
\end{enumerate}

%\todo{ describe what's being done where}

\noindent
{\bf Acknowledgments }
I have benefited from a number of papers on stacks in algebraic and
differential geometry: Metzler \cite{Metzler}, Behrend-Xu \cite{BXu},
Vistoli \cite{vistoli}, Behrend {\em et alii} \cite{BCEFFGK} and
Heinloth \cite{heinloth} to name a few.  Many definitions and
arguments are borrowed from these papers.  There are now several books
on Lie groupoids. I have mostly cribbed from Moerdijk-Mr\v cun
\cite{MoerMrc}.  The paper by Laurent-Gengoux {\em et alii} \cite{LGTX}
 has also been very useful.

I have benefited from conversations with my colleagues.  In particular
I would like to thank Matthew Ando, Anton Malkin, Tom Nevins and
Charles Rezk.

\subsection{Conventions and notation}
We assume that the reader is familiar with the notions of categories,
functors and natural transformations.  Given a category $C$ we denote
its collection of objects by $C_0$; $C_0$ is not necessarily a set.
The reader may pretend that we are working in the framework of Von~
Neumann - Bernays - G\"{o}del (NBG) axioms for set theory. But for all
practical purposes set theoretic questions, such as questions of {\em
size} will be swept under the rug, i.e., ignored.  We denote the class
of morphisms of a category $C$ by $C_1$.  Given two objects $x,y\in
C_0$ we denote the collection of all morphisms from $x$ to $y$ by
$Hom_C (x,y)$ or by $C(x,y)$, depending on what is less cumbersome.

\subsection{A note on 2-categories}
We will informally use the notions of strict
and weak 2-categories.  For formal definitions the reader may wish to
consult Borceux \cite{Borceux}.  Roughly speaking a strict 2-category
$A$ is an ordinary category $A$ that in addition to objects and
morphisms  has morphisms between morphisms, which are usually called
2-morphisms (to distinguish them from ordinary morphisms which are
called 1-morphisms). We will also refer to 1-morphisms as (1-)arrows. The
prototypical example is $\Cat$, the category of categories.  The
objects of $\Cat$ are categories, 1-morphisms (1-arrows) are functors
and 2-morphisms (2-arrows) are natural transformations between
functors.  We write $\alpha :f\Rightarrow g$ and $\xy
(-8,0)*+{\bullet}="1"; (8,0)*+{\bullet}="2"; {\ar@/^1.pc/^{f}
"1";"2"}; {\ar@/_1.pc/_{g} "1";"2"}; {\ar@{=>}^<<<{\scriptstyle
\alpha} (0,2)*{};(0,-2)*{}} ;
\endxy $, when there is a 2-morphism $\alpha$ from a 1-morphism
$f$ to a 1-morphism   $g$.
Natural transformations can be composed in two different ways:
\[
\textrm{vertically} \quad
\xy (-10,0)*+{\bullet}="1"; (10,0)*+{\bullet}="2"; {\ar@/^2.pc/^{f} "1";"2"};
{\ar@/_2.pc/_{h} "1";"2"};{\ar@{->} "1";"2"};
{\ar@{=>}^<<<{\scriptstyle \alpha} (0,4)*{};(0,1)*{}} ;{\ar@{=>}^<<<{\scriptstyle \beta } (0,-1)*{};(0,-4)*{}} ;
\endxy \mapsto
\xy (-10,0)*+{\bullet}="1"; (10,0)*+{\bullet}="2"; {\ar@/^2.pc/^{f} "1";"2"};
{\ar@/_2.pc/_{h} "1";"2"};
{\ar@{=>}^<<<{\scriptstyle \beta \alpha} (0,4)*{};(0,-4)*{}} ; \endxy
\]
and
\[
\textrm{horizontally:} \quad \xy (-8,0)*+{\bullet}="1"; (8,0)*+{\bullet}="2"; {\ar@/^1.pc/^{f} "1";"2"};
{\ar@/_1.pc/_{g} "1";"2"}; {\ar@{=>}^<<<{\scriptstyle \alpha}
(0,2)*{};(0,-2)*{}} ;
 (24,0)*+{\bullet}="3"; {\ar@/^1.pc/^{k} "2";"3"};
{\ar@/_1.pc/_{l} "2";"3"}; {\ar@{=>}^<<<{\scriptstyle \beta }
(16,2)*{};(16,-2)*{}} ;
\endxy
\mapsto
\xy (-10,0)*+{\bullet}="1"; (10,0)*+{\bullet}="2"; {\ar@/^1.pc/^{kf} "1";"2"};
{\ar@/_1.pc/_{lg} "1";"2"};
{\ar@{=>}^<<<{\scriptstyle \beta *\alpha} (-1,2)*{};(-1,-2)*{}} ; \endxy
\]
The two composition are related by a 4-interchange law that we will
not describe.  Axiomatizing this structure gives rise to the notion of a
strict 2-category.

Note that for every 1-arrow $f$ in a 2-category we
have a 2-arrow $id_f: f\Rightarrow f$.  A 2-arrow is invertible if it
is invertible with respect to the vertical composition.  So it makes
sense to talk about two 1-arrows in a 2-category being isomorphic.

Weak 2-categories (also known as bicategories) also have objects,
1-arrows and 2-arrows, but the composition of 1-arrows is no longer
required to be strictly associative.  Rather, given a triple of
composable 1-arrows $f,g, h$ one requires that $(fg)h$ is isomorphic
to $f(gh)$.  That is, one requires that there is an invertible 2-arrow
$\alpha: (fg)h\Rightarrow f(gh)$.  As in a strict 2-category it makes
sense to talk about two 1-arrows in a weak 2-category being isomorphic
(the vertical composition of 2-arrows is still strictly associative).
If $f:x\to y$ is an arrow in a weak 2-category for which there is an
arrow $g: y\to x$ with $fg$ isomorphic to $1_y$ and $gf$ isomorphic to
$1_x$ we say that $f$ is {\em weakly invertible} and that $g$ is a
weak inverse of $f$.

\section{Orbifolds as groupoids}

In this section we define proper etale Lie groupoids.
Proposition~\ref{prop:lin} below is the main justification for
thinking of these groupoids as  orbifolds (or orbifold atlases):
locally they look like finite groups acting linearly on a disk in some
Euclidean space.  Proper etale Lie groupoids are not the only
groupoids we may think of as orbifolds.  For example, a locally free
proper action of a Lie group on a manifold defines a groupoid that is
also, in some sense, an orbifold.  We will explain in what sense such an
action groupoid is equivalent to an etale groupoid.  This requires the
notion of a pullback of a groupoid along a map.
We start by recalling the definition  of a fiber product of sets.
\begin{definition}
 Let $f:X\to Z$ and $g: Y \to Z$ be two maps of sets.  The {\em fiber
 product  } of $f$ and $g$, or more sloppily the fiber product of $X $
 and $Y$ over $Z$ is the set
\[
X \times_{f, Z, g} Y \equiv X\times _Z Y :=
\{ (x, y) \mid f(x) = g(y) \}.
\]
\end{definition}

\begin{remark}
If $f:X\to Z$ and $g:Y \to Z$ are continuous maps between topological
spaces then the fiber product $X\times _Z Y$ is a subset of $X\times
Y$ and hence is naturally a topological space.  If $f:X\to Z$ and $g:Y
\to Z$ are smooth maps between manifolds, then the fiber product is
{\em not} in general a manifold.  It {\em is} a manifold if the map $(f,g):
X\times Y \to Z\times Z$ is transverse to the diagonal $\Delta_Z$.
\end{remark}

\begin{definition}
A {\em groupoid} is a small category (objects form a
set) where all morphisms are invertible.
\end{definition}
  Thus a groupoids $G$
consists of the set of objects (0-morphisms) $G_0$ , the set of arrows
(1-morphisms) $G_1$ together with five structure maps: $s:G_1 \to G_0$
(source), $t:G_1 \to G_0$ (target), $u: G_0 \to G_1$ (unit), $m$
(multiplication) and $inv:G_1 \to G_1$ (inverse) satisfying the
appropriate identities.  We think of an element $\gamma\in G_1$ as an
arrow from its source $x$ to its target $y$:
$x\stackrel{\gamma}{\rightarrow} y$. Thus $s(\gamma) = x$ and
$t(\gamma ) = y$.  For each object $x\in \Gamma_0$ we have the
identity arrow $x\stackrel{1_x}{\longrightarrow} x$, and $u(x) = 1_x$.
Note that $s(u(x)) = t(u(x))= x$.  Arrows with the matching source and
target can be composed: $\xy (-8,0)*+{x \bullet}="x";
(8,0)*+{\bullet y}="y";
(24, 0)*+{\bullet z}="z";
{\ar@/^1.pc/^{\gamma } "x";"y"};
{\ar@/^1.pc/^{\sigma } "y";"z"};
{\ar@/_1.pc/_{\sigma \circ \gamma } "x";"z"};
\endxy$
Therefore the multiplication map $m$  is defined on the fiber product
\[
G_1 \times _{G_0} G_1 \equiv G_1 \times _{s,{G_0},t }G_1
:= \{ (\sigma , \gamma ) \in G_1 \times G_1 \mid s(\sigma) = t (\gamma)\} ;
\]
\[
 m : G_1 \times _{G_0} G_1 \to G_1, \quad m (\xy (-8,0)*+{x \bullet}="x";
(8,0)*+{\bullet y}="y";
(24, 0)*+{\bullet z}="z";
{\ar@/^1.pc/^{\gamma } "x";"y"};
{\ar@/^1.pc/^{\sigma } "y";"z"};
\endxy) = \xy (-8,0)*+{x \bullet}="x";
(8, 0)*+{\bullet z}="z";
{\ar@/^1.pc/^{\sigma \circ \gamma } "x";"z"};
\endxy .
\]
Since all 1-arrows are invertible by assumption ($G$ is a
groupoid) there is the inversion map
\[
inv: G_1 \to G_1, \quad inv
(\xy (-8,0)*+{x \bullet}="x";
(8,0)*+{\bullet y}="y";
{\ar@/^1.pc/^{\gamma } "x";"y"};
\endxy ) =
\xy
(-8,0)*+{x \bullet}="x";
(8,0)*+{\bullet y}="y";
{\ar@/_1.pc/_{\gamma \inv} "y";"x"};
\endxy .
\]
The five maps are subject to identities, some of which we already mentioned.

\begin{notation}
We will write $G = \{ G_1\toto G_0\}$ when we want to emphasize that a groupoid $G$ has the source and target maps.
\end{notation}
\begin{example}
A group is a groupoid with one object.
\end{example}

\begin{example}[sets are groupoids]
Let $M$ be a set, $G_0 = G_1 = M$, $s,t = id:M\to M$, $inv = id$
etc. Then $\{M\toto M\}$ is a groupoid with all the arrows being the
identity arrows.
\end{example}

\begin{example}[action groupoid]
A left action of a group $\Gamma$ on a set $X$ defines an \emph{action
groupoid} as follows: we think of a pair $(g, x)\in \Gamma \times X$
as an arrow from $x$ to $g\cdot x$, where $\Gamma \times X \ni (g, x)
\mapsto g\cdot x \in X$ denotes the action).

Formally $G_1 = \Gamma \times X$, $G_0 = X$, $s(g, x) = x$, $t(g,x) =
g\cdot x$, $u(x) = (1, x)$ where $1\in \Gamma$ is the identity
element, $inv (g,x) = (g\inv, g\cdot x)$ and the multiplication is
given by
\[
(h, g\cdot x) (g, x) = (hg ,x).
\]
\end{example}

\begin{definition}[Orbit space/Coarse moduli space]
Let $G$ be a groupoid.  Then
\[
{\sim} := \{ (x,y) \in G_0 \times G_0
\mid \text{ there is } \gamma \in G_1 \text{ with }
x \stackrel{\gamma}{\longrightarrow } y\}
\]
is an equivalence relation on $G_0$.  We denote the quotient
$G_0/{\sim}$ by $G_0 /G_1$ and think of the projection $G_0 \to
G_0/G_1$ as the orbit map.  We will refer to the set $G_0/G_1$ as the
{\em orbit space } of the groupoid $G$.  Note that if $G = \{ \Gamma
\times X \toto X\} $ is an action groupoid, then $G_0/G_1 = X/\Gamma$.
The orbit space $G_0/G_1$ is also refered to as the {\em coarse moduli
space } of the groupoid $G$.
\end{definition}

\begin{definition}[maps/morphisms of groupoids]
A {\em map/morphism} $\phi$ from a groupoid $G$ to a groupoid $H$ is a
functor.  That is, there is a map $\phi_0 : G_0 \to H_0$ on objects, a
map $\phi_1:G_1 \to H_1$ on arrows that makes the diagram
\[
\xymatrix{
G_1 \ar[r]^{\phi_1} \ar[d]^{(s,t)} & H_1 \ar[d]^{(s,t)} \\
G_0 \times G_0 \ar[r]^{(\phi_0, \phi_0)} & H_0 \times H_0 }
\]
commute and preserves the (partial)
multiplication and the inverse maps.
\end{definition}
\begin{remark}
Note that $\phi_0 = s\circ \phi_1 \circ u$, where $u:G_0\to G_1$
is the unit map.  For this reason we will not distinguish between
a functor $\phi:G\to H$ and the corresponding map on the set of arrows
$\phi_1:G_1 \to H_1$.
\end{remark}

Next we define Lie groupoids. Roughly speaking a \emph{Lie groupoid} is a groupoid in the category
of manifolds.  Thus the spaces of arrows and objects are manifolds and
all the structure maps $s,t, u, m,inv$ are smooth.  Additionally one
usually assumes that the spaces of objects and arrows are is Hausdorff and
paracompact.

There is a small problem with the above definition: in general there
is no reason for the fiber product $G_1 \times _{G_0} G_1$ of a Lie
groupoid $\{ G_1 \toto G_0\}$ to be a manifold.  Therefore one
cannot talk about the multiplication being smooth.  This problem is
corrected by assuming that the source and target maps $s,t:G_1 \to
G_0$ are submersions.  We therefore have:

\begin{definition}
A \emph{Lie groupoid} is a groupoid $G$ such that the set $G_0$ of objects
and the set $G_1$ arrows are (Hausdorff paracompact) manifolds, the source and
target maps $s,t:G_1 \to G_0$ are submersions and all the rest of the
structure maps are smooth as well.
\end{definition}

\begin{remark}
Since $inv^2 = id$, $inv$ is a diffeomorphism.  Since $s\circ inv =
t$, the source map $s$ is a submersion if and only if the target
map $t$ is a submersion.
\end{remark}
\begin{remark}
The coarse moduli space $G_0/G_1$ of a Lie groupoid $G$ is naturally a
topological space.
\end{remark}
\begin{example}[manifolds as Lie groupoids]
Let $M$ be a manifold, $G_0 = G_1 = M$, $s,t = id:M\to M$, $inv = id$
etc. Then $\{M\toto M\}$ is a Lie groupoid with all the arrows being the
identity arrows.
\end{example}

\begin{example}[action Lie groupoid]
Let $\Gamma$ be a Lie group acting smoothly on a manifold $M$.  Then
the action groupoid $\Gamma \times M\toto M$ is a Lie groupoid.
\end{example}

\begin{example}[cover Lie groupoid]
Let $M$ be a manifold with an open cover $\{U_\alpha\}$.  Let $\cU =
\bigsqcup U_\alpha$ be the disjoint union of the sets of the cover
and $\bigsqcup_{\alpha, \beta} U_\alpha \cap U_\beta$ the disjoint
union of double intersections.  More formally
\[
\bigsqcup_{\alpha, \beta} U_\alpha \cap U_\beta = \cU \times _M \cU,
\]
where $\cU= \bigsqcup U_i \to M$ is the evident map.  We define $s:
U_\alpha \cap U_\beta \to U_\alpha$ and $t: U_\alpha \cap U_\beta \to
U_\beta$ to be the inclusions.  Or, more formally, we have two
projection maps $s,t:
\cU \times _M \cU \to \cU$.  We think of a point $x\in U_\alpha \cap
U_\beta$ as an arrow from $x\in U_\alpha$ to  $x\in U_\beta$.
One can check that $   \cU \times _M \cU \toto \cU$ is a Lie
groupoid.  Alternatively it's the pull-back of the groupoid $M\toto
M$ by the ``inclusion'' map $\cU\to M$ (see
Definition~\ref{def-pull-back} below).
\end{example}

\begin{remark}
Occasionally it will be convenient for us to think of {\em a cover of a
manifold $M$ as a surjective local diffeomorphism $\phi:\cU\to M$.}
Here is a justification: If $\{U_i\}$ is an open cover of $M$ then
$\cU = \bigsqcup U_i$ and $\phi:\cU\to M$ is the ``inclusion.''
Conversely, if $\phi:\cU \to M$ is a surjective local diffeomorphism
then there is an open cover $\{V_i\}$ of $\cU$ so that $\phi|_{V_i}: V_i
\to M $ is an open embedding.  Moreover the ``inclusion'' $\bigsqcup
\phi(V_i) \to M$ ``factors'' through $\phi:\cU \to M$. So any cover in
the traditional sense is a cover in the generalized sense.  And any
cover in the new sense gives rise to a cover in the traditional sense.
\end{remark}

\begin{definition}[Proper groupoid]
A Lie groupoid $G$ is {\em proper} if the map $(s,t):G_1 \to
G_0\times G_0$, which sends an arrow to the pair of points (source,
target),  is proper.
\end{definition}

\begin{definition}[Etale groupoid]
A Lie groupoid $G$ is {\em \'etale} if the source and target maps $s,t:G_1 \to G_0$ are local diffeomorphisms.
\end{definition}

\begin{example}
An action groupoid for an action of a finite group is \'etale and
proper.  A cover groupoid $\cU\times _M \cU \toto \cU$ is \'etale and
proper.  An action groupoid $\Gamma \times M \toto M$ is proper if and
only if the action is proper (by definition of a proper action).
\end{example}

\begin{definition}[Restriction of a groupoid]  Let $G$ be a
groupoid and $U\subset G_0$ an open set.  Then $s\inv (U)\cap t\inv
(U)$ is an open submanifold of $G_1$ closed under multiplication and
taking inverses, hence forms the space of arrows of a Lie groupoid
whose  space of objects is $U$.  We call this groupoid {\em the
restriction of $G$ to $U$} and denote it by $G|_U$.
\end{definition}

\begin{remark}
We will see that the restriction is a special case of a pull-back
construction defined below (Definition~\ref{def-pull-back}).
\end{remark}

We can now state the proposition that justifies thinking of proper
etale Lie groupoids as orbifolds.  It asserts
that any such groupoid looks locally like a linear action of a finite
group on an open ball in some $\R^n$.  More precisely, we have:
\begin{proposition} \label{prop:lin}
Let $G$ be a proper etale Lie groupoid.  Then for any
point $x\in G_0$ there is an open neighborhood $U\subset G_0$ so that
the restriction $G|_U$ is isomorphic to an action groupoid $\Lambda
\times U \toto U$ where $\Lambda$ is a finite group.  That is, there
is an invertible functor $f:G|_U \to\{\Lambda \times U \toto U \}$.
Moreover, we may take $U$ to be an open ball in some Euclidean space
centered at the origin and the action of $\Lambda$ to be linear.
\end{proposition}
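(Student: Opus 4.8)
The plan is to produce the finite group as the isotropy group $\Lambda := G_x := (s,t)^{-1}(x,x)$ at the point $x$, and then use properness and the \'etale condition to show this group acts on a small slice.

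\medskip

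\emph{Step 1: The isotropy group is finite.} Since $G$ is \'etale, the source map $s\colon G_1\to G_0$ is a local diffeomorphism, so the fiber $s^{-1}(x)$ is discrete. The isotropy group $G_x$ is a closed subset of $s^{-1}(x)$, hence discrete. On the other hand, $G_x = (s,t)^{-1}(x,x)$ is compact because $(s,t)\colon G_1\to G_0\times G_0$ is proper and $\{(x,x)\}$ is compact. A discrete compact set is finite, so $\Lambda := G_x$ is a finite group under the groupoid multiplication (it is a group precisely because $s$ and $t$ agree on these arrows).

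\medskip

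\emph{Step 2: Choose a common slice on which $\Lambda$ acts.} For each $\gamma\in\Lambda$, the condition that $s,t$ are local diffeomorphisms gives an open neighborhood $W_\gamma\subset G_1$ of $\gamma$ such that $s|_{W_\gamma}$ and $t|_{W_\gamma}$ are diffeomorphisms onto open neighborhoods of $x$; composing, we get a locally defined diffeomorphism $h_\gamma := (t|_{W_\gamma})\circ(s|_{W_\gamma})^{-1}$ of a neighborhood of $x$ fixing $x$, with $h_{1_x} = \mathrm{id}$. The plan is to shrink to a single neighborhood $U_0$ of $x$ on which all the $h_\gamma$ are defined and on which they satisfy $h_\gamma h_{\gamma'} = h_{\gamma\gamma'}$ — this compatibility holds near $x$ because groupoid multiplication is continuous and $u(x)=1_x$, so after shrinking, the germ identities force honest identities. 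I then need to use properness once more: shrinking $U_0$, properness of $(s,t)$ guarantees that $(s,t)^{-1}(U_0\times U_0)$ consists only of arrows lying in $\bigsqcup_{\gamma\in\Lambda} W_\gamma$ — i.e., there are no ``unexpected'' arrows between nearby points other than those coming from $\Lambda$. This is the standard tube-lemma-style argument: if not, one extracts a sequence of arrows $\gamma_n$ with $(s,t)(\gamma_n)\to(x,x)$ but $\gamma_n\notin\bigcup W_\gamma$; properness makes $\{\gamma_n\}$ have a convergent subsequence whose limit is in $G_x=\Lambda$, contradiction.

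\medskip

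\emph{Step 3: Average to make the action linear, and conclude.} Now $\Lambda$ acts on $U_0$ by diffeomorphisms fixing $x$, and by Step 2 the restriction $G|_{U_0}$ is isomorphic to the action groupoid $\Lambda\times U_0\toto U_0$ via the functor sending an arrow $\delta$ (necessarily in some $W_\gamma$) to $(\gamma,s(\delta))$; one checks this is a well-defined invertible functor. Finally, to arrange that $U$ is a ball and the action linear: pick a $\Lambda$-invariant Riemannian metric near $x$ by averaging any metric over the finite group $\Lambda$, and use the exponential map at $x$, which intertwines the $\Lambda$-action on a ball in $T_xG_0$ (linear, being the isotropy representation $\gamma\mapsto d(h_\gamma)_x$) with the $\Lambda$-action on a geodesic ball $U\subset U_0$ around $x$; restricting the groupoid isomorphism from Step 2 to this $U$ and transporting through $\exp_x$ gives the desired linear model. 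I expect \textbf{Step 2} — specifically the properness argument ruling out extra arrows near $x$, and pinning down the neighborhood on which the germ-level group law becomes an honest action — to be the main obstacle; the rest is routine once the local structure is isolated.
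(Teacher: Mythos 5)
Your argument is correct, but it is worth pointing out that the paper does not actually prove Proposition~\ref{prop:lin}: its ``proof'' is a two-line citation to Theorem~2.3 of Zung's linearization paper (which treats general proper groupoids, a much harder statement) and to Moerdijk--Pronk for the effective case. What you have written is essentially the standard self-contained argument for the easy \'etale special case, and it is sound: finiteness of $\Lambda=(s,t)^{-1}(x,x)$ from ``discrete (\'etale) plus compact (proper)''; local bisections $h_\gamma=t\circ(s|_{W_\gamma})^{-1}$ whose germ-level group law becomes an honest one after finitely many shrinkings; the tube/sequence argument from properness to exclude arrows over $U_0\times U_0$ outside $\bigsqcup W_\gamma$; and Bochner linearization via an averaged metric and $\exp_x$. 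You correctly identify Step~2 as the crux. Two small points to make explicit when writing this up: (i) choose the $W_\gamma$ pairwise disjoint (possible since $\Lambda$ is a finite subset of the discrete fiber $s^{-1}(x)$ in a Hausdorff manifold), so that ``$\delta$ necessarily in some $W_\gamma$'' assigns a well-defined $\gamma$; and (ii) the final neighborhood must be $\Lambda$-invariant for the action groupoid $\Lambda\times U\toto U$ to make sense --- your geodesic ball handles this, but it should be said that the isomorphism of Step~2 is only asserted after restricting to that invariant $U$. Note also that your functor is built from the disjoint sheets $W_\gamma$ rather than from the diffeomorphisms $h_\gamma$ themselves, which is exactly what makes the argument work for non-effective groupoids (where distinct $\gamma$ may induce the same $h_\gamma$); this is the point on which the original Moerdijk--Pronk statement was more restrictive.
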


\begin{proof}
This is a special (easy) case of Theorem~2.3 in \cite{zung}.  For
proper etale {\em effective } groupoids the result was proved earlier in
\cite{MoerPronk}.
\end{proof}

\begin{remark}
One occasionally runs into an idea that a proper etale Lie groupoid
$G$ is an atlas on its coarse moduli space $G_0/G_1$.  Indeed, there is
an analogy with atlases of manifolds: if $M$ is a manifold and
$\{U_i\}$ is a cover by coordinate charts then then $M$ is the coarse
moduli space of the cover groupoid $\{\cU\times _M \cU \toto \cU\}$,
where $\cU = \bigsqcup U_i$.  This idea is leads to a lot of trouble.
\end{remark}

Next I'd like to explain how to obtain a proper etale Lie groupoid
from a proper and locally free action of a Lie group on a manifold.

\begin{definition}\label{def-pull-back}
The {\em pull-back} of a groupoid $G$ by a map $f:N\to G_0$ is the
groupoid $f^*G$ with the space of objects $N$, the space of arrows
\begin{eqnarray*}
(f^*G) _1 &:= & (N\times N) \, \times _{G_0 \times G_0} G_1 \\
&=& \{(x, y , g )\in N\times N \times G_1 \mid s(g) = f(x), t(g) = f(y)\}\\
&=& \{(x, y , g )\in N\times N
\times G_1 \mid f(x) \bullet \xy (-8,0)*+{ }="x";
(8, 0)*+{ }="y";
{\ar@/^1.pc/^{g } "x";"y"};
\endxy \bullet f(y) \},
\end{eqnarray*}
the source and target maps $s(x,y,g) = x$, $t(x,y, g) = y$ and
multiplication given by
\[
(y,z,h) (x, y, g) = (x, z, hg).
\]
Note that  the maps $f_0 = f:N \to G_0$ and $f_1 : f^*G_1
\to G_1$, $f_1 (x, y, g) = g$, form a functor $\tilde{f}: f^*G \to G$.
\end{definition}
It is not always true that the pull-back of a Lie groupoid by a smooth
map is a Lie groupoid: we need the space of arrows $(f^*G)_1$ to be a
manifold and the source and target maps to be submersions. The
following condition turns out to be sufficient.
  %MacKenzie, General Theory ..., Prop~2.3.1

\begin{proposition} \label{prop1}  Let $G$ be a Lie groupoid and $f:N\to G_0$ a smooth map.
Consider the fiber product
\[
N\times_{f, G_0, s} G_1 = \{ (x,g) \in
N\times G_1 \mid f(x) = s(g)\}.
\]
If the map $N\times_{f, G_0, s} G_1
\to G_0$, $(x,g) \mapsto t(g)$ is a submersion, then the pullback
groupoid $f^*G$ is a Lie groupoid and the functor $\tilde{f}: f^*G
\to G$ defined above is a smooth functor.
\end{proposition}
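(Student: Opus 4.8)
\section*{Proof proposal}

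The plan is to realize the arrow space $(f^*G)_1$ as an iterated fiber product along submersions, so that the manifold and submersion claims all follow from the standard fact that a pullback of a submersion along an arbitrary smooth map is transverse, hence a manifold, and that the complementary projection is again a submersion.

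First I would note that
\[
P := N\times_{f, G_0, s} G_1 = \{(x,g)\in N\times G_1 \mid f(x) = s(g)\}
\]
is a manifold and that the projection $P\to N$ is a submersion: this is exactly the pullback of the submersion $s:G_1\to G_0$ (a submersion because $G$ is a Lie groupoid) along $f$. Now consider the smooth map $\tau : P\to G_0$, $\tau(x,g) = t(g)$, which is a submersion by hypothesis. Reordering coordinates yields a canonical identification
\[
(f^*G)_1 = \{(x,y,g)\mid s(g)=f(x),\ t(g)=f(y)\} \;\cong\; N\times_{f, G_0, \tau} P,
\]
given by $(x,y,g)\longleftrightarrow (y,(x,g))$. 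The right-hand side is the pullback of the submersion $\tau$ along $f$, hence a manifold; so $(f^*G)_1$ is a manifold. Under this identification the target map $t_{f^*G}(x,y,g)=y$ of $f^*G$ is exactly the projection $N\times_{G_0}P\to N$, which, being the pullback of the submersion $\tau$, is a submersion. The inversion $inv_{f^*G}(x,y,g) = (y,x,g\inv)$ is a diffeomorphism of $(f^*G)_1$ (it is the restriction of the diffeomorphism $(x,y,g)\mapsto(y,x,inv_G(g))$ of $N\times N\times G_1$ and is its own inverse), and $s_{f^*G}\circ inv_{f^*G} = t_{f^*G}$; hence the source map $s_{f^*G}(x,y,g)=x$ is a submersion as well.

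It then remains to check that the remaining structure maps of $f^*G$ are smooth and that $\tilde f$ is a smooth functor. The unit $u_{f^*G}(x) = (x,x,u_G(f(x)))$ and the inverse $inv_{f^*G}$ are manifestly smooth. Since $s_{f^*G}$ and $t_{f^*G}$ are submersions, the set of composable pairs $(f^*G)_1\times_{t_{f^*G}, N, s_{f^*G}}(f^*G)_1$ is a manifold; sending a composable pair $\big((x,y,g),(y,z,h)\big)$ to $(h,g)$ lands in $G_1\times_{s, G_0, t}G_1$ (because $s(h)=f(y)=t(g)$), and the multiplication $\big((x,y,g),(y,z,h)\big)\mapsto (x,z,\,m_G(h,g))$ is the composite of this smooth map with $m_G$ and the coordinate maps, hence smooth. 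Finally $\tilde f_0 = f$ and $\tilde f_1(x,y,g) = g$ are smooth (a given smooth map and a coordinate projection), and compatibility of $\tilde f$ with source, target, unit, inverse and multiplication is immediate from the formulas in Definition~\ref{def-pull-back}; thus $\tilde f : f^*G\to G$ is a smooth functor.

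The only genuine content is the identification $(f^*G)_1\cong N\times_{G_0}P$ together with the observation that the single hypothesis ``$\tau$ is a submersion'' does double duty: it is what makes $(f^*G)_1$ a manifold and, at the same time, what makes the target map $t_{f^*G}$ (and hence, via $inv_{f^*G}$, the source map $s_{f^*G}$) a submersion. Everything after that is routine verification of smoothness and of the functor axioms.
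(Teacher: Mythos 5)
Your argument is correct: realizing $(f^*G)_1$ as the iterated fiber product $N\times_{f,G_0,\tau}\bigl(N\times_{f,G_0,s}G_1\bigr)$ and letting the single hypothesis on $\tau$ supply both the smooth structure and the submersivity of the target (hence, via inversion, of the source) is exactly the standard proof. The paper itself gives no argument here, only a citation to Moerdijk--Mr\v cun, and your proof is essentially the one found there.
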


\begin{proof}
See, for example, \cite{MoerMrc}, pp.\ 121--122.
\end{proof}

\begin{remark}\label{rk:pullback-equiv}
If the map $N\times_{f, G_0, s} G_1
\to G_0$, $(x,g) \mapsto t(g)$ is a {\em surjective} submersion then
the functor $\tilde{f}:f^*G \to G$ is an equivalence of groupoids in
the sense of Definition~\ref{def:equiv-Lie-gpoid} below.
\end{remark}

\begin{example}
Let $G$ be a Lie groupoid, $U$ an open subset of the space of objects
$G_0$.  The inclusion map $\iota:U\hookrightarrow G_0$ satisfies the
conditions of the proposition above and so the pull-back groupoid
$\iota^*G$ is a Lie groupoid.  It is not hard to see that $\iota^*G$ is
the restriction $G|_U$ of $G$ to $U$.
\end{example}

Next recall that an action of a Lie group $\Gamma$ on a manifold $M$
is
\emph{locally free } if for all points $x\in M$ the stabilizer group
\[
\Gamma_x := \{g\in \Gamma \mid g\cdot x = x\}
\]
is discrete.  An action of $\Gamma$ on $M$ is \emph{proper} if the map
\[
\Gamma \times M \to M\times M, \quad (g, x) \mapsto (x, g\cdot x)
\]
is proper (this is exactly the condition for the action groupoid
$\{\Gamma\times M \toto M\}$ to be proper).  A \emph{slice} for an
action of $\Gamma$ on $M$ at a point $x\in M$ is an embedded
submanifold $\Sigma \subset M$ with $x\in\Sigma$ so that
\begin{enumerate}
\item $\Sigma $ is preserved by the action of $\Gamma_x$: for all
$s\in \Sigma$ and $g\in \Gamma_x$, we have $g\cdot s \in \Sigma$.

\item The set $\Gamma \cdot \Sigma := \{g\cdot s \mid g\in \Gamma, s\in
\Sigma\}$ is open in $M$.

\item The map $\Gamma \times \Sigma \to \Gamma \cdot \Sigma \subset
M$, $(g, s) \mapsto g\cdot s$ descends to a diffeomorphism $(\Gamma
\times \Sigma)/\Gamma_x \to \Gamma \cdot \Sigma$ (here $\Gamma _x $
acts on $ \Gamma \times \Sigma$ by $a\cdot (g, s) = (ga\inv , a\cdot
s)$).
\end{enumerate}
Thus, for every point $s\in \Sigma$ the orbit $\Gamma \cdot s$
intersects the slice $\Sigma $ in a unique $\Gamma _x$ orbit.
A classical theorem of Palais asserts that a proper action of a Lie group
$\Gamma$ on a manifold $M$ has a slice at every point of $M$.\\

With these preliminaries out of the way, consider a proper locally
free action of a Lie group $\Gamma$ on a manifold $M$.  Pick a
collection of slices $\{ \Sigma_\alpha\}_{\alpha \in A}$ so that every
$\Gamma$ orbit intersects a point in one of these slices: $\Gamma
\cdot \bigcup \Sigma_\alpha = M$.  Let $\cU = \sqcup \Sigma_\alpha$
and $f:\cU \to M$ be the ``inclusion'' map: for each $x\in \Sigma
_\alpha $, $f(x) = x\in M$.  The fact that $\Sigma_\alpha$'s are
slices implies (perhaps after a moment of thought) that
Proposition~\ref{prop1} applies with $G =
\{\Gamma\times M \toto M\}$ and $f: \cU \to M$. We get a pullback Lie groupoid
$f^*G$, which is, by construction, etale.  By
Remark~\ref{rk:pullback-equiv} the functor $\tilde{f}:f^*\{\Gamma
\times M \toto M\}\to \{\Gamma \times M \toto M\}$ is an equivalence
of groupoids.  Note that $\tilde{f}$ is not surjective and may not be
injective either.  In particular, it's not invertible.  Reasons for
thinking of it as some sort of an isomorphism are explained in the
next section.

Note that if we pull $G$ back further by the inclusion
$\Sigma_\beta \hookrightarrow \sqcup \Sigma_\alpha$, we get an action
groupoid of the form $\Lambda \times \Sigma _\beta \toto \Sigma_\beta$
where $\Lambda $ is  a discrete compact group, that is, a finite group.
%\end{example}

\begin{example} An industrious reader may wish to work out the example
of the action of $\C^\times = \{z\in \C\mid z\not =0\}$ on $\C^2
\smallsetminus \{0\}$ given by $\lambda \cdot (z_1, z_2) = (\lambda ^p
z_1, \lambda^q z_2)$ for a pair of positive integers $(p, q)$.  The
reader will only need two slices: $\C \times \{1\}, \{1\}\times \C
\subset \C^2 \smallsetminus
\{0\}$.
\end{example}

\section{Localization and its discontents}

At this point in our discussion of orbifolds we reviewed the reasons
for thinking of smooth orbifolds as Lie groupoids.  If orbifolds are
Lie groupoids then their maps should be smooth functors.  It will turn
out that many such maps that should be invertible are not.  We
therefore need to enlarge our supply of available maps.  We start by
recalling various notions of two categories being ``the same.''  More
precisely recall that there are two equivalent notions of equivalence
of categories.

Recall our notation: if $A$ is a category, then $A_0$ denotes its
collection of objects and $A(a,a')$ denotes the collection of arrows
between two objects $a,a'\in A_0$.
\begin{definition}
A functor $F:A\to B$ is {\em full} if for any $a,a'\in A_0$ the map
$F: A(a,a') \to B(F(a), F(a'))$ is onto.  It is {\em faithful} if $F:
A(a,a') \to B(F(a), F(a'))$ is injective.  A functor that is full and
faithful is {\em fully faithful.}

A functor $F:A\to B$ is {\em essentially surjective} if for any $b\in
B_0$ there is $a\in A_0$ and an invertible arrow $\gamma\in B_1$ from
$F(a)$ to $b$.
\end{definition}

\begin{example}
Let $\Vect$ denote the category of finite dimensional vector spaces
over $\R$ and linear maps. Let $\Mat$ be the category of real
matrices.  That is, the objects of $\Mat$ are non-negative integers.
A morphism from $n$ to $m$ in $\Mat$ is an $n\times m$ real matrix.
The functor $\Mat \to
\Vect$ which sends $n$ to $\R^n$ and a matrix to the corresponding
linear map is fully faithful and essentially surjective.
\end{example}

The following theorem is a basic result in category theory.
\begin{theorem}
A functor $F:A\to B$ is fully faithful and essentially surjective if
and only if there is a functor $G: B\to A$ with two natural
isomorphisms (invertible natural transformations) $\alpha:
FG\Rightarrow id_A$ and $\beta :GF\Rightarrow id_B$.
\end{theorem}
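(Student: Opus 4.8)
The plan is to handle the two implications separately, taking the two natural isomorphisms to have the only directions that make the composites $FG$ and $GF$ meaningful, namely $\alpha : FG \Rightarrow id_B$ and $\beta : GF \Rightarrow id_A$. The implication $(\Leftarrow)$ is a short diagram chase. Given such $G$, $\alpha$, $\beta$, essential surjectivity of $F$ is immediate, since for $b\in B_0$ the component $\alpha_b : FG(b)\to b$ is an invertible arrow out of the image of $F$. Faithfulness of $F$ I would get by using naturality of $\beta$ to rewrite any $f:a\to a'$ as $f=\beta_{a'}\circ GF(f)\circ\beta_a^{-1}$, so that $F(f)=F(f')$ forces $f=f'$; the same argument with $\alpha$ in place of $\beta$ shows $G$ is faithful. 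For fullness, given $h:F(a)\to F(a')$ I would set $f:=\beta_{a'}\circ G(h)\circ\beta_a^{-1}$; naturality of $\beta$ at $f$ gives $GF(f)=G(h)$, and faithfulness of $G$ then forces $F(f)=h$.

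For $(\Rightarrow)$, assume $F$ is fully faithful and essentially surjective. First I would build $G$ on objects: for each $b\in B_0$ choose --- here the axiom of choice, or the ambient NBG conventions, is used --- an object $G(b)\in A_0$ together with an invertible arrow $\alpha_b:FG(b)\to b$. To define $G$ on an arrow $h:b\to b'$, note that $\alpha_{b'}^{-1}\circ h\circ\alpha_b : FG(b)\to FG(b')$ has, by full faithfulness, a unique preimage $G(h):G(b)\to G(b')$ under $F$. The uniqueness clause (equivalently, faithfulness of $F$) makes functoriality of $G$ automatic, and the defining identity $F(G(h))=\alpha_{b'}^{-1}\circ h\circ\alpha_b$ is precisely the naturality square asserting that $\alpha=(\alpha_b)_{b\in B_0}:FG\Rightarrow id_B$ is a natural transformation; it is invertible because each $\alpha_b$ is.

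It remains to produce $\beta:GF\Rightarrow id_A$. For $a\in A_0$ the arrow $\alpha_{F(a)}:FGF(a)\to F(a)$ is invertible, so by full faithfulness there is a unique $\beta_a:GF(a)\to a$ with $F(\beta_a)=\alpha_{F(a)}$; moreover $\beta_a$ is invertible because a fully faithful functor reflects isomorphisms (pull the inverse of $\alpha_{F(a)}$ back along $F$). Naturality of $\beta$ I would verify after applying the faithful functor $F$: for $f:a\to a'$ one applies the defining identity of $G$ to the arrow $F(f):F(a)\to F(a')$ to get $FGF(f)=\alpha_{F(a')}^{-1}\circ F(f)\circ\alpha_{F(a)}$, whence $F(\beta_{a'}\circ GF(f))=\alpha_{F(a')}\circ FGF(f)=F(f)\circ\alpha_{F(a)}=F(f\circ\beta_a)$, and faithfulness gives $\beta_{a'}\circ GF(f)=f\circ\beta_a$.

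The main obstacle is more of a bookkeeping nuisance than a genuine difficulty: since $F$ is only assumed to be an equivalence rather than part of an adjoint equivalence, no triangle identities are available, so \emph{every} coherence statement --- functoriality of $G$, naturality of $\alpha$, naturality and invertibility of $\beta$ --- has to be teased out of the single universal property packaged in full faithfulness, and the construction of $G$ on objects is irreducibly a choice. The one point that needs care is to apply the identity $F(G(h))=\alpha_{b'}^{-1}\circ h\circ\alpha_b$ consistently, both to arrows $h$ of $B$ and to arrows of the special form $F(f)$ when checking naturality of $\beta$.
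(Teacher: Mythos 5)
Your proof is correct and complete; it is the standard textbook argument. Note that the paper itself offers no proof of this theorem --- it simply records it as ``a basic result in category theory'' --- so there is nothing to compare against; your argument is the canonical one (build $G$ on objects by choice, lift arrows through full faithfulness, and read off naturality of $\alpha$ from the defining identity $F(G(h))=\alpha_{b'}^{-1}\circ h\circ\alpha_b$). You were also right to silently repair the typo in the statement: as printed, $\alpha:FG\Rightarrow id_A$ and $\beta:GF\Rightarrow id_B$ do not typecheck, since $FG$ is an endofunctor of $B$ and $GF$ of $A$; the intended targets are $id_B$ and $id_A$ respectively, exactly as you took them.
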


\begin{definition}
A functor $F:A\to B$ satisfying one of the two equivalent conditions
of the theorem above is called an \emph{equivalence of categories}.
We think of the functor $G:B\to A$ above as a (weak) inverse of $F$.
\end{definition}

There is no analogous theorem for $C^\infty$ functors between Lie
groupoids: there are many fully faithful essentially surjective smooth
functors between Lie groupoids with no continuous (weak) inverses.
The simplest examples come from cover groupoids.  If $\cU\times _M \cU
\toto \cU$ is a cover groupoid associated to a cover $\cU \to M$ of
a manifold $M$ then the natural functor $\{\cU\times _M \cU \toto
\cU\} \to \{M\to M\}$ is fully faithful and essentially surjective and
has no continuous weak inverse (unless one of the connected components
of $\cU$ is all of $M$).

Additionally, not every fully faithful and essentially surjective
smooth functor between two Lie groupoids should be considered an
equivalence of Lie groupoids (cf., not every smooth bijection between
manifolds is a diffeomorphism).  The accepted definition is:

\begin{definition}\label{def:equiv-Lie-gpoid}
A smooth functor $F:G\to H$ from a Lie groupoid $G$ to a Lie groupoid
$H$ is an {\em equivalence} of Lie groupoids if
\begin{enumerate}
\item The induced  map
\[
G_1 \to (G_0\times G_0)\times_{(F,F),(H_0,H_0), (s,t)} H_1,
\quad \gamma
\mapsto (s(\gamma), t(\gamma), F(\gamma))
\]
 is a diffeomorphism.
\item  The map
$G_0 \times_{F, H_0, t} H_1 {\to } H_0$,  $(x,h)\mapsto s(h)$ is a surjective
submersion.
\end{enumerate}

\end{definition}
\begin{remark}  The first condition implies that  $F$ is
fully faithful and the second that it is  essentially surjective.
\end{remark}

\begin{remark}
In literature this notion of equivalence variously goes by the names
of ``essential'' and ``weak'' equivalences to distinguish it from
"strict" equivalence: a smooth functor of Lie groupoids $F:G\to H$ is
a {\em strict equivalence} if there is a smooth functor $L: H\to G$ with
two smooth natural isomorphisms (invertible natural transformations)
$\alpha: FL\Rightarrow id_G$ and $\beta :LF\Rightarrow id_H$.  We
will not use the notion of strict equivalence of Lie groupoids in
this paper.
\end{remark}

\begin{example}
As we pointed out above, if $f:\cU\to M$ is surjective local
diffeomorphism then the functor $\tilde{f}: \{\cU\times _M \cU\toto
\cU\}\to \{M\toto M\}$ is an equivalence of Lie groupoids in the sense
of Definition~\ref{def:equiv-Lie-gpoid}.
\end{example}

\begin{example} As we have seen in the previous section,
if a Lie group $\Gamma$ acts locally freely and properly on a manifold
$M$, $\cU =\bigsqcup \Sigma_\alpha$ is a collection of slices with
$\Gamma\cdot \bigcup \Sigma _\alpha = M$ and $f^*\{\Gamma \times M
\toto M\}$ is the pullback of the action groupoid along $f:\cU \to M$
then the functor $\tilde{f}:f^*\{\Gamma \times M \toto M\} \to\{\Gamma
\times M \toto M\}$ is an equivalence  of Lie groupoids.  This is a reason 
for thinking of the action groupoid $\{\Gamma \times M \toto M\}$ as an
orbifold.
\end{example}

\begin{remark}
We cannot fully justify the correctness of
Definition~\ref{def:equiv-Lie-gpoid}.  And indeed the reasons for it
being ``correct'' are somewhat circular.  If one embeds the category
of Lie groupoids either into the the Hilsum-Skandalis category of
groupoids and generalized maps (see below) or into stacks (stacks are
defined in the next section), the functors that become invertible are
precisely the equivalences and nothing else!  But why define the
generalized maps or to embed groupoids into stacks?  To make
equivalences invertible, of course!
\end{remark}

Let us recapitulate where we are.  An orbifold, at this point, should
be a Lie groupoid equivalent to a proper etale Lie groupoid.  If this
is the case, what should be the maps between orbifolds?  Smooth
functors have to be maps in our category of orbifolds, but we need a
more general notion of a map to make equivalences invertible.  There
is a standard construction in category theory called {\em
localization} that allows one to formally invert a class of morphisms.
This is the subject of the next subsection.

\subsection{Localization of a category}

Let $C$ be a category and $W$ a subclass of morphisms of $C$
($W\subset C_1$).  A \emph{localization of $C$ with respect to $W$} is
a category $D$ and a functor $L:C\to D$ with the following properties:
\begin{enumerate}
\item For any $w\in W$, $L(w)$ is invertible in $D$.
\item If $\phi: C \to E$ is a functor with the property that $\phi(w)$ is invertible in $E$ for all $w\in W$ then there exists a unique map $\psi :D\to E$ so that $\psi \circ L = \phi$, that is,
\[
\xy
(0,0)*+{C}="1"; (12,0)*+{D}="2";
(12,12)*+{E} ="3";
{\ar@{->}_{L} "1";"2"};
{\ar@{->}^{\phi} "1";"3"};{\ar@{-->}_{\psi} "2";"3"};
\endxy
\]
commutes
\end{enumerate}
\begin{remark}
The second condition is there to make sure, among other things, that
the localization $D$ is not the trivial category with one object and
one morphism.
\end{remark}

The next two results are old and well known.  The standard reference
is Gabriel-Zisman \cite{GZ}.   We include them for completeness.
\begin{lemma}
If a localization $L:C\to D$ of $C$ with respect to $W \subset C_1$
exists, then it is unique.
\end{lemma}

\begin{proof}
This is a simple consequence of the universal property of the
localization.  If $L':C\to D'$ is another functor satisfying the two
conditions above then there are functors $\psi: D\to D'$ and $\tau:D'
\to D$ so that $\psi \circ L = L'$ and $\tau\circ L' = L$.  Hence
$\tau \circ\psi\circ L = L$.  Since $id_D \circ L = L$ as well, $\tau
\circ \psi = id_D$ by uniqueness.  Similarly $\psi \circ \tau =
id_{D'}$.
\end{proof}

\begin{notation}
We may and will talk about {\bf the} localization of $C$ with respect
to $W$ and denote it by $\pi_W : C \to C[W\inv]$.
\end{notation}

\begin{lemma}\label{lem:exist-local}
The localization $\pi_W : C \to C[W\inv]$ of a category $C$ with
respect to a subclass $W$ of arrows always exists.
\end{lemma}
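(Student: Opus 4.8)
The plan is to construct $C[W\inv]$ explicitly as a quotient of a free category on an enlarged graph, and then verify the universal property by hand. First I would build a directed graph $\Gamma$ whose vertices are the objects of $C$ (i.e., $C_0$) and whose edges are of two kinds: one edge $\bar{f}: x \to y$ for each morphism $f \in C(x,y)$, and one formal ``reversed'' edge $\bar{w}\inv : y \to x$ for each $w \in W$ with $w: x \to y$. Let $F(\Gamma)$ denote the free category on $\Gamma$: its objects are $C_0$, and its morphisms from $x$ to $y$ are finite (possibly empty) composable strings of edges, with composition given by concatenation. There is an obvious map on objects and on generating edges sending $f \mapsto \bar f$; this does not yet respect composition or identities in $C$, which is exactly what the next step fixes.

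Next I would pass to the quotient category $D := F(\Gamma)/{\sim}$, where $\sim$ is the smallest congruence (an equivalence relation on each hom-set compatible with composition on both sides) generated by the following relations: (i) $\overline{\mathrm{id}_x} \sim$ the empty string at $x$, for every $x \in C_0$; (ii) $\bar g \, \bar f \sim \overline{g \circ f}$ whenever $g \circ f$ is defined in $C$; and (iii) $\bar w \, \bar w\inv \sim$ the empty string at $y$ and $\bar w\inv \, \bar w \sim$ the empty string at $x$, for every $w: x \to y$ in $W$. Because $\sim$ is a congruence, composition in $F(\Gamma)$ descends to $D$, so $D$ is a genuine category. Define $L : C \to D$ to be the identity on objects and to send a morphism $f$ to the $\sim$-class of the length-one string $\bar f$; relations (i) and (ii) say precisely that $L$ is a functor, and relation (iii) says that $L(w)$ is invertible in $D$ for every $w \in W$, with inverse the class of $\bar w\inv$. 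This establishes condition (1) of the definition of a localization.

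For condition (2), suppose $\phi : C \to E$ is a functor with $\phi(w)$ invertible in $E$ for all $w \in W$. I would define $\tilde\psi : F(\Gamma) \to E$ on objects by $\tilde\psi = \phi$ on $C_0$, and on edges by $\tilde\psi(\bar f) = \phi(f)$ and $\tilde\psi(\bar w\inv) = \phi(w)\inv$, extending to strings by composition; this is automatically a functor out of a free category. One checks that $\tilde\psi$ respects all three families of generating relations (for (i) and (ii) because $\phi$ is a functor, for (iii) because $\phi(w)\inv$ really is a two-sided inverse of $\phi(w)$), hence $\tilde\psi$ factors uniquely through the quotient $F(\Gamma) \to D$ to give a functor $\psi : D \to E$ with $\psi \circ L = \phi$. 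Uniqueness of $\psi$ follows because $L$ is surjective on objects and the classes of the generating edges $\bar f = L(f)$ together with the formal inverses $L(w)\inv$ generate all morphisms of $D$ under composition, so $\psi$ is forced on a generating set and therefore everywhere.

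The only real subtlety — and the step I expect to be the main obstacle to write cleanly — is the handling of the congruence $\sim$: one must be careful that ``smallest congruence generated by relations on hom-sets'' is well-defined (intersect all congruences containing the given relations, noting that the all-identifying relation is one such, so the family is nonempty) and that composition genuinely descends, i.e.\ that $a \sim a'$ and $b \sim b'$ imply $b a \sim b' a'$ whenever the composites are defined. This is routine but fiddly, and it is where the set-theoretic size remarks in the paper's conventions quietly do their job, since the hom-classes of $F(\Gamma)$ and $D$ need not be sets. I would state these points briefly and then refer the reader to Gabriel--Zisman \cite{GZ} for the verification that nothing goes wrong.
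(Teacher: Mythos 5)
Your construction is exactly the one the paper uses: form the directed graph on $C_0$ with edges $C_1 \sqcup W\inv$, take the free category of paths, quotient by the congruence generated by the identity, composition, and inversion relations, and verify the universal property by extending $\phi$ over the formal inverses. The only differences are that you are slightly more explicit than the paper about the quotient being a genuine congruence and about why $\psi$ is unique, which are welcome but not substantive departures.
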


\begin{remark}
Some readers may be bothered by the issues of {\em size}: the
construction we are about to describe may produce a category where the
collections of arrows between pairs of objects may be too big to be
mere sets.  Later on we will apply Lemma~\ref{lem:exist-local} to the
category of Lie groupoids.  There is a standard solution to this
``problem.''  One applies the argument below only to small categories,
whose collection of objects are sets.  What about the category $\Gr$
of Lie groupoids which is not small (the collection of all Lie
groupoids is a proper class)?  There is a standard solution to this
problem as well.  Fix the disjoint union $\E$ of Euclidean spaces of
all possible finite dimensions; $\E: = \R^0 \sqcup
\R^1\sqcup\ldots \sqcup \R^n\sqcup\ldots $.   Given a Lie groupoid $G$, we 
consider its space of objects $G_0$ as being  embedded in its
space $G_1$ of arrows.  By the Whitney embedding theorem the manifold
$G_1$ may be embedded in some Euclidean space $\R^n \subset \E$.  It
follows that the category $\Gr$ of Lie groupoids is equivalent to the
category of $\E\Gr$ of Lie groupoids embedded in $\E$.  Clearly
$\E\Gr$ is small.
\end{remark}

\begin{proof}[Proof of Lemma~\ref{lem:exist-local}]
The idea of the construction of $C[W\inv]$ is to keep the objects of
$C$ the same, to add to the arrows of $C$ the formal inverses of the
arrows in $W$ and to divide out by the appropriate relations.  Here
are the details.

Recall that a directed graph $\cG$ consists of a class of objects
$\cG_0$, a class of arrows $\cG _1$ and two maps $s, t: \cG_1 \to
\cG_0$ (source and target).
In other words, for us a directed graph is a ``category without
compositions."

Given a category $C$ and a subclass $W\subset C_1$, let $W\inv$ be
the class consisting of formal inverses of elements of $W$: for each
$w\in W$ we have exactly one $w\inv \in W\inv$ and conversely.  We
then have a directed graph $\tilde{C}[W\inv]$ with objects $ C_0$ and
arrows $ C_1 \sqcup W\inv$.

A directed graph $\cG$ generates a free category $F(\cG)$ on $\cG$:
the objects of $F(\cG)$ are objects $\cG_0$ of $\cG$ and arrows are
{\em paths}.  That is, an arrow in
$F(\cG)_1$ from $x\in \cG_0$ to $y\in \cG_0$ is a finite sequence $(\gamma_n,
\gamma_{n-1}, \ldots,\gamma_1)$ of
elements of $\cG_1$ with $s(\gamma_1) = x$ and $t(\gamma_n ) = y$
(think: $x\stackrel{\gamma_1}{\to} \bullet \stackrel{\gamma_2}{\to}
\ldots
\stackrel{\gamma_n}{\to} y$).
In addition, for every $x\in \cG_0$ there is an empty path $(\,)_x $
from $x$ to $x$.  Paths are composed by concatenation:
\[
(\sigma_m,  \ldots,\sigma_1)(\gamma_n,  \ldots,\gamma_1)
= (\sigma_m,  \ldots,\sigma_1,\gamma_n,  \ldots,\gamma_1).
\]

We now construct $C[W\inv]$ from the category $F(\tilde{C}[W\inv])$ by
dividing out the  arrows of $F(\tilde{C}[W\inv])$ by an
equivalence relation.  Namely let $\sim$ be the equivalence relation
generated by the following equations:
\begin{enumerate}
\item $(\,)_x \sim (1_x)$ for all $x\in cG_0$ ($1_x$ is the identity
arrow in $C_1$ for an object $x\in C_0$.
\item $(\sigma) (\gamma) \sim (\sigma \gamma)$ for any pair of composable arrows in $C$.
\item For any $x\stackrel{w}{\to} y\in W$, $(w, w\inv) \sim (1_y) $ and $(w\inv, w) \sim (1_x)$.
\end{enumerate}
Thus we set $C[W\inv]_0 = C_0$ and $C[W\inv]_1 = F(\tilde{C}[W\inv])_1/\sim$.
We have the evident functor $\pi_W:C\to C[W\inv]$ induced by the inclusion of $C$ into the directed graph $\tilde{C}[W\inv]$.

It remains to check that $\pi_W:C\to C[W\inv]$ is a localization.
Note first that for any $w\in W$ the arrow $\pi_W(w)$ is invertible in
$C[W\inv]$ by construction of $C[W\inv]$.  If $\phi:C \to E$ is any
functor such that $\phi(w)$ is invertible for any $w\in W$, then
$\phi$ induced a map $\tilde{\phi}: \tilde{C}[W\inv] \to E$:  $\tilde{\phi} (w\inv) := \phi(w)\inv$.  This map
drops down to a functor $\psi: C[W\inv] \to E$ with $\psi ([w\inv]) =
\phi(w)\inv$ for all $w\in W$ (here $[w\inv]$ denotes the equivalence
class of the path $(w\inv) $ in $F(\tilde{C}[W\inv])$).
\end{proof}

{\bf We now come to a subtle point.} It may be tempting to apply the
localization construction to the category $\Gr$ whose objects are Lie
groupoids, morphisms are functors and the class $W$ consists of
equivalences, and then take the category of orbifolds to be the
subcategory whose objects are isomorphic to proper etale Lie
groupoids.  Let us not rush.  First of all, it will not at all be
clear what the morphisms in $\Gr[W\inv]$ {\em are}, since they are
defined by generators and relations.  A more explicit construction
would be more useful.  Secondly, $\Gr$ is really a 2-category: there
are also natural transformations between functors.  We are thus
confronted with three choices:
\begin{enumerate}
\item Forget about natural transformations and localize; we get a category.
\item Identify isomorphic functors  and then localize.\footnote{
Two smooth functors $f,g:G\to H$ between two Lie groupoids are {\em
isomorphic} if there is a natural transformation $\alpha:G_0 \to H_1$
from $f$ to $g$.  Note that since all arrows in $H_1$ are invertible,
$\alpha$ is automatically a natural isomorphism.} 
We get, perhaps, a smaller category.
\item Localize $\Gr$ as a 2-category.
\end{enumerate}
It is not obvious what the correct choice is.  Option (1) is never
used; perhaps it's not clear how to do it geometrically.  Option (2)
is fairly popular \cite{h4, moer, Hofer}.  There are several
equivalent geometric ways of carrying it out.  We will review the one
that uses isomorphism classes of bibundles.  It is essentially due to
Hilsum and Skandalis \cite{HilsumSkan}.  We will prove that it is, indeed, a
localization.  We will show that it has the unfortunate feature that
maps from one orbifold to another do not form a sheaf: we cannot
reconstruct a map from its restrictions to elements of an open cover.
We will argue that this feature of option (2) is unavoidable: it does
not depend on the way the localization is constructed.  For this
reason I think that choosing option (2) is a mistake.

There is another reason to be worried about option (2). It is ``widely
known'' that the loop space of an orbifold is an orbifold.  So if we
take the point of view that an orbifold is a a groupoid, the loop
space of an orbifold should be a groupoid as well.  But if we think of
the category of manifolds as a 1-category the space of arrows between
two orbifolds is just a set and not a category in any natural sense.
There are, apparently, ways to get around this problem \cite{chen, h4,
LupUribe}, but I don't understand them.

There are many ways of carrying out option (3), localizing
$\Gr$ as a 2-category.  Let me single out three
\begin{itemize}

\item
Pronk constructed a calculus of fractions and localized $\Gr $ as a
weak 2-category \cite{pronk}. She also proved  that the resulting
2-category is equivalent to the strict 2-category of geometric
stacks over manifolds.

\item One can embed the strict 2-category $\Gr$ into a weak
2-category $\Bi$ whose objects are Lie groupoids, 1-arrows are
bibundles and 2-arrows equivariant diffeomorphisms between bibundles.
We will explain the construction of $\Bi$ in the next subsection.

\item One can embed $\Gr$ into the strict 2-category of stacks over manifolds.
We will explain this in section~\ref{sec:stacks}.
\end{itemize}
%\todo{rewrite the next paragraph}
In the rest of the section we discuss option (2) in details.  We start
by introducing bibundles and reviewing some of their properties.
Thereby we will introduced the weak 2-category $\Bi$. Next we will
discuss a concrete localization of the category of Lie groupoids due
to Hilsum and Skandalis; it amounts to identifying isomorphic 1-arrows
in $\Bi$.  We will then demonstrate that localizing groupoids as
1-category is problematic no matter which particular localization is
being used.

\subsection{Bibundles}\label{bi}

\begin{definition}
A {\em right action} of a groupoid $H$ on a manifold $P$ consists of the
following data:
\begin{enumerate}
\item a map $a:P\to H_0$ (anchor) and
\item a map
\[
P\times _{a, H_0, t} H_1 \to P, \quad (p, h) 
\mapsto p\cdot h, \, \textrm{  (the action)}
\]
(as usual $t:H_1 \to H_0$ denotes the target map) such that
\begin{enumerate}
\item $a(p\cdot h) = s(h)$
\quad\textrm{ for all } $(p,h)\in P\times_{a,H_0,t}H_1$;
\item $(p\cdot h_1) \cdot h_2 = p\cdot (h_1 h_2)$
\quad\textrm{ for all appropriate } $p\in P$ and $h_1, h_2 \in H_1$ ;
\item $p\cdot 1_{a(p)} = p$ for all $p\in P$.
\end{enumerate}
\end{enumerate}

\end{definition}

\begin{definition}
A manifold $P$ with a right action of a groupoid $H$ is a {\bf
principal (right) $H$-bundle over $B$} if there is a surjective
submersion $\pi:P\to B$ so that
\begin{enumerate}
\item $\pi (p\cdot h)  = \pi (p)$ for all
$(p, h) \in P\times _{a, H_0, t} H_1$, that is, $\pi$ is $H$-invariant; and
\item  the map $P\times _{a, H_0, t} H_1 \to P\times _B P$, $(p, h) \mapsto (p,
 p\cdot h)$ is a diffeomorphism, that is, $H$ acts freely and
 transitively on the fibers of $\pi:P\to B$.
\end{enumerate}
\end{definition}
\begin{example}
For a groupoid $H$ the target map $t:H_1 \to H_0$ makes $H_1 $ into
a principal $H$-bundle with the action of $H$ being the
multiplication on the right (the anchor map is $s:H_1 \to H_0$).
This bundle is sometimes called the {\em unit} principal $H$ bundle
for the reasons that may become  clear later.
\end{example}

Principal $H$-bundles pull back: if $\pi: P\to B$
is a principal $H$ bundle and $f:N\to B$ is a map then the pullback
\[
f^*P:= N\times _B P \to N
\]
is a principal $H$ bundle as well.  The action of $H$ on $f^*P$ is the
restriction of the action of $H$ on the product $N\times P$ to
$N\times _B P \subset N\times P$.  It is not difficult to check that
$f^*P \to N$ is indeed a principal $H$-bundle.

\begin{lemma}\label{lem:princH-global}
A principal $H$-bundle $\pi: P\to B$ has a global section if and
only if $P$ is isomorphic to a pull-back of the principal $H$-bundle
$H_1\stackrel{t}{\to } H_0$.
\end{lemma}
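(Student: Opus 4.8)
The plan is to prove the two directions separately, using the ``unit'' principal $H$-bundle $H_1 \xrightarrow{t} H_0$ (with anchor $s$ and right action by multiplication) as the universal object that pullbacks are compared to.

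For the ``if'' direction, suppose $P \cong f^*(H_1) = B \times_{H_0} H_1$ for some map $f : B \to H_0$. The pullback bundle $B \times_{H_0} H_1 \to B$ has the section $b \mapsto (b, 1_{f(b)})$, using the unit map $u : H_0 \to H_1$; one checks this is smooth (it is the composite of $(\mathrm{id}_B, u \circ f)$ with the inclusion into the fiber product, which lands there because $t(1_{f(b)}) = f(b)$) and is a section of the projection to $B$. Transporting this section across the isomorphism $P \cong f^*(H_1)$ gives a global section of $\pi : P \to B$.

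For the ``only if'' direction, suppose $\sigma : B \to P$ is a global section of $\pi$, so $\pi \circ \sigma = \mathrm{id}_B$. Let $a : P \to H_0$ be the anchor, and set $f := a \circ \sigma : B \to H_0$. I claim $P \cong f^*(H_1) = B \times_{f, H_0, t} H_1$ as principal $H$-bundles over $B$. Define $\Phi : P \to B \times_{H_0} H_1$ as follows: given $p \in P$, the points $\sigma(\pi(p))$ and $p$ lie in the same $\pi$-fiber, so by the freeness-and-transitivity axiom (condition (2) in the definition of principal bundle) there is a unique $h \in H_1$ with $a(h) = \ldots$, more precisely with $\sigma(\pi(p)) \cdot h = p$; necessarily $t(h) = a(\sigma(\pi(p))) = f(\pi(p))$ and $s(h) = a(p)$. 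Set $\Phi(p) := (\pi(p), h)$. This lands in $B \times_{f, H_0, t} H_1$ since $t(h) = f(\pi(p))$. Smoothness of $\Phi$ follows because $h$ is obtained by inverting the diffeomorphism $P \times_{a, H_0, t} H_1 \to P \times_B P$ from condition (2), composed with $(\sigma \circ \pi, \mathrm{id}_P) : P \to P \times_B P$. The inverse map $B \times_{H_0} H_1 \to P$ sends $(b, h) \mapsto \sigma(b) \cdot h$ (well-defined since $t(h) = f(b) = a(\sigma(b))$, so the action is defined), and one checks the two composites are identities. Finally $\Phi$ is $H$-equivariant: $\Phi(p \cdot h') = (\pi(p \cdot h'), hh') = (\pi(p), hh')$ since $\pi(p\cdot h') = \pi(p)$ and $\sigma(\pi(p)) \cdot (hh') = (\sigma(\pi(p)) \cdot h) \cdot h' = p \cdot h'$; this matches the $H$-action on the pullback. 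Hence $\Phi$ is an isomorphism of principal $H$-bundles.

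The main obstacle is verifying smoothness of the map $\Phi$ (equivalently, that $p \mapsto h$ is smooth), since $h$ is defined by a ``unique solution'' property rather than an explicit formula; the key is to unwind it as the composite of the section-induced map $P \to P \times_B P$ with the inverse of the diffeomorphism guaranteed by the principal bundle axiom, after which everything else is a routine check that the action axioms make the two candidate maps mutually inverse and $H$-equivariant.
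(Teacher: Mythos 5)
Your proof is correct and follows essentially the same route as the paper's: the forward direction constructs the isomorphism $p \mapsto (\pi(p), h)$ via the ``division map'' obtained by inverting the diffeomorphism $P\times_{a,H_0,t}H_1 \to P\times_B P$ (which is exactly how the paper handles smoothness), with the same classifying map $f = a\circ\sigma$ and the same inverse $(b,h)\mapsto \sigma(b)\cdot h$; the converse uses the unit section of $H_1\stackrel{t}{\to}H_0$ just as the paper does.
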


\begin{proof}
Since $P\to B$ is $H$-principal we have a diffeomorphism
\[
P\times _{a, H_0, t}H_1 \to P\times _B P, \quad (p, h) \mapsto (p,
p\cdot h).
\]
Its inverse is of the form $(p_1, p_2) \mapsto (p_1, d(p_1, p_2)) \in
P\times _{a, H_0, t}H_1 $, where  $d(p_1, p_2)$ is the unique element
$h$ in $H_1$ so that $p_2 = p_1 \cdot h$.  The map 
\[
d: P\times _B P\to H_1 \quad \textrm{(``the division map'')}
\]
 is smooth.  Note that $d(p,p) = 1 _{a(p)}$.  If $\sigma :B\to P$
is a section of $\pi: P\to B$, define $\tilde{f}:P\to H_1$ by
\[
\tilde{f}(p) = d( \sigma (\pi(p)), p).
\]
Then
\[
p = \sigma (\pi (p)) \cdot \tilde{f} (p)  \quad \textrm{for all }
p\in P.
\]
Note that $\tilde{f}$ is $H$-equivariant: observe that for all $(p,h)
\in P\times _{H_0} H_1$
\[
\sigma(\pi( p\cdot h)) \cdot \tilde{f}(p) \cdot h = p\cdot h =
\sigma (\pi (p\cdot h) ) \tilde{f} (p\cdot h).
\]
Hence, since $P$ is $H$-principal,
\[
\tilde{f}(p) \cdot h = \tilde{f} (p\cdot h).
\]
Consequently we get a map
\[
\varphi:P\to f^*H_1, \quad \varphi(p)= (\pi(p), \tilde{f} (p)),
\]
where $f:B\to H_0$ is defined by  $f(b) = a (\sigma (b))$.   The map $\varphi$ has a smooth inverse $\psi: f^*H_1 \to P$: $\psi (b,h) = \sigma (b)\cdot h$, hence $\varphi$ is a diffeomorphism.

Conversely, since $H_1 \stackrel{t}{\to} H_0$ has a global section,
namely $u(x) = 1_x$ for $x\in H_0$, any pullback of $H_1
\stackrel{t}{\to} H_0$ has a global section as well.
\end{proof}

\begin{remark}
It is useful to think of principal groupoid bundles with global
sections as trivial principal bundles. 
\end{remark}

The next result is technical and won't be needed until we start
discussing stacks in the next section.  It should be skipped on the
first reading.

\begin{corollary}\label{cor:g-equiv-iso}
Let $G$ be a Lie groupoid,  $\xi_1 \to N$, $\xi_2 \to N$ two
principal $G$ bundles with anchor maps $a_1, a_2$ respectively. Any
$G$-equivariant map $\psi: \xi _1 \to \xi_2$ inducing the identity
on $N$ is a diffeomorphism.
\end{corollary}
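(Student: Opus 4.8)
The plan is to reduce the statement to the fact that a $G$-equivariant map between \emph{trivial} principal bundles is a diffeomorphism, and then to globalize using Lemma~\ref{lem:princH-global} together with the fact that having a global section is a local condition. First I would observe that diffeomorphism is a local property on the base: since $\psi:\xi_1\to\xi_2$ covers $\mathrm{id}_N$, and $\xi_i\to N$ are surjective submersions, it suffices to find, for every point of $N$, an open neighborhood $V$ over which $\psi$ restricts to a diffeomorphism $\xi_1|_V \to \xi_2|_V$. (Here $\xi_i|_V$ means the restriction of the bundle to $\pi_i^{-1}(V)$; since the $\pi_i$ are submersions this is an open submanifold and the restricted bundle is again principal.)

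Next, the key local step. Given a point $n\in N$, pick a local section $\sigma_1:V\to \xi_1$ of $\pi_1$ near $n$ (possible because $\pi_1$ is a submersion). Then $\sigma_2 := \psi\circ\sigma_1 : V\to \xi_2$ is a section of $\pi_2$ over $V$, because $\psi$ covers the identity. By Lemma~\ref{lem:princH-global}, the existence of these sections means that over $V$ both $\xi_1|_V$ and $\xi_2|_V$ are isomorphic to pullbacks of the unit bundle $G_1\stackrel{t}{\to}G_0$; explicitly, writing $f_i := a_i\circ\sigma_i : V\to G_0$, the maps
\[
\varphi_i : \xi_i|_V \to f_i^*G_1,\qquad \varphi_i(p) = (\pi_i(p),\, \tilde f_i(p)),\qquad \tilde f_i(p) = d_i(\sigma_i(\pi_i(p)),p),
\]
are diffeomorphisms, where $d_i$ is the division map of $\xi_i$. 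Note that $f_1 = f_2$ on $V$: indeed $f_2 = a_2\circ\psi\circ\sigma_1 = a_1\circ\sigma_1 = f_1$, using that a $G$-equivariant map intertwines the anchors ($a_2\circ\psi = a_1$, which follows from equivariance since $a_i(p)$ is determined by the requirement $p\cdot 1_{a_i(p)}=p$ and $\psi$ preserves the $G$-action). So both $\varphi_1$ and $\varphi_2$ land in the \emph{same} manifold $f_1^*G_1$.

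Finally I would check that, through these two trivializations, $\psi$ becomes the identity map of $f_1^*G_1$, or at worst an obvious $G$-equivariant automorphism. The natural guess is that $\varphi_2\circ\psi\circ\varphi_1^{-1} = \mathrm{id}$: starting from $(b,h)\in f_1^*G_1$ we have $\varphi_1^{-1}(b,h) = \sigma_1(b)\cdot h$, then $\psi(\sigma_1(b)\cdot h) = \psi(\sigma_1(b))\cdot h = \sigma_2(b)\cdot h$ by equivariance, and then $\varphi_2(\sigma_2(b)\cdot h) = (b, d_2(\sigma_2(b),\sigma_2(b)\cdot h)) = (b,h)$ since $d_2$ is the division map of $\xi_2$. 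Hence $\psi|_{\xi_1|_V} = \varphi_2^{-1}\circ\varphi_1$ is a composite of diffeomorphisms, so it is a diffeomorphism. Covering $N$ by such $V$'s, $\psi$ is a local diffeomorphism; it is also a bijection (it covers the identity and is fiberwise bijective, being fiberwise a translation by $h$ between $G$-torsors), hence a diffeomorphism. The only mildly delicate point is the bookkeeping that $a_2\circ\psi = a_1$ and that $\psi$ genuinely commutes with the $G$-action as stated; once that is in hand everything else is a routine application of Lemma~\ref{lem:princH-global}. I expect the identification $\varphi_2\circ\psi\circ\varphi_1^{-1}=\mathrm{id}$ to be the computational heart, but it is short.
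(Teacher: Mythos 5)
Your proof is correct and follows essentially the same route as the paper's: reduce to the case where the bundles admit sections via locality on the base, use the section $\sigma_1$ and the transported section $\psi\circ\sigma_1$ to trivialize both bundles over the \emph{same} map $f=a_1\circ\sigma_1=a_2\circ\psi\circ\sigma_1$ by Lemma~\ref{lem:princH-global}, and observe that $\psi$ becomes the identity in these trivializations. Your explicit computation of $\varphi_2\circ\psi\circ\varphi_1^{-1}=\mathrm{id}$ just fills in what the paper leaves as ``tracing through the definitions.''
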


\begin{proof}
Note that the $a_2 \circ \psi = a_1$; this is necessary for $\psi$
to intertwine the two $G$ actions.

Since $\psi$ is $G$-equivariant and induces the identity map on the
base $N$, for any open set $U\subset N$, $\psi (\xi_1|_U) \subset
\xi_2|_U$.  Therefore it's enough to show that for any sufficiently
small subset $U$ of $N$ the map  $\psi:\xi_1|_U \to \xi_2|_U$ is a
diffeomorphism.   Since $\xi_1 \to N$ is a submersion, it has local
sections. The two observations above allows us to assume that $\xi_1
\to N$ has a global section $\sigma:N\to \xi_1$.

We have seen in the proof of Lemma~\ref{lem:princH-global} that the
section $\sigma$ together with the ``division map" $d: \xi_1 \times
_N \xi_1 \to G_1$ defines a $G$-equivariant diffeomorphism
\[
\tilde{f} :\xi_1 \to f^* (G_1 \to G_0),
\]
where $f = a_1 \circ \sigma$.
 Similarly the section $\psi \circ
\sigma :N\to \xi_2$ together with the division map for $\xi_2$
defines a $G$-equivariant diffeomorphism
\[
\tilde{h}: \xi_2 \to h^* (G_1 \to G_0),
\]
where $h = a_2 \circ (\psi \circ \sigma)$.  Since $(a_2 \circ \psi)
\circ \sigma = a_1 \circ \sigma$, we have $h =f$.  By tracing through
the definitions one sees that
\[
 \psi = (\tilde{h})\inv \circ \tilde{f}.
\]
Hence $\psi$ is a diffeomorphism.
\end{proof}

\begin{definition}
A {\em left} action of a groupoid $G$ on a manifold $M$ is
\begin{enumerate}
\item A map $a_L = a:M\to G_0$ (the  (left) anchor) and
\item a map
\[
 G_1\times_{s, G_0, a} M \to M, \quad (\gamma, x) \mapsto \gamma
\cdot x, \textrm{ (the action) },
\]
such that
\begin{enumerate}
\item $1_{a(x)} \cdot x = x \textrm{ for all } x\in M$,
\item $a(\gamma \cdot x) = t(\gamma)$ for all
$(\gamma, x) \in G_1\times_{s, G_0, a} M $,
\item $\gamma_2\cdot (\gamma _1 \cdot x) = (\gamma_2\gamma_1) \cdot
x$ for all appropriate $\gamma_1, \gamma_2 \in G_1$ and $x\in M$.
\end{enumerate}
\end{enumerate}
\end{definition}

\begin{remark}
Given a right action $a_R: M\to G_0$, $M\times_{G_0} G_1 \to M$ of a
groupoid $G$ on a manifold $M$, we get a left action of $G$ on $M$
by composing it with the inversion map $G_1\to G_1$,  $\gamma \mapsto
\gamma\inv$.
\end{remark}
\begin{remark}\label{rm:bibbundle-from-functor}
If $f:G\to H$ is a smooth functor between two Lie groupoids then the
pullback
\[
f_0^*H_1 =G_0 \times _{f_0, H_0, t} H_1\stackrel{\pi}{\to} G_0
\]
of the principal $H$-bundle $H_1\stackrel{t}{\to} H_0$ by $f_0:G_0
\to H_0$ is  a principal $H$-bundle.  In addition it has a left
$G$-action:
\[
G_1 \times_{s, G_0, \pi} (G_0 \times _{f_0, H_0, t} H_1) \to (G_0
\times _{f_0, H_0, t} H_1), \quad (g, (x,h))\mapsto (t(g), f_1(g)h).
\]
This left $G$-action commutes with the right $H$-action.
\end{remark}
The manifold $f_0^*H_1$ with the commuting actions of $G$ and $H$
constructed above is an example of a {\em bibundle} from $G$ to $H$,
which we presently define.

\begin{definition}
Let $G$ and $H$ be two groupoids.  A {\em bibundle} from $G$ to $H$
is a manifold $P$ together with two maps $a_L:P\to G_0$, $a_R:P\to
H_0$ such that
\begin{enumerate}
\item there is a left action of $G$ on $P$ with respect to an anchor $a_L$ and
a right action of $H$ on $P$ with respect to an anchor $a_R$;
\item $a_L:P \to G_0$ is a principal $H$-bundle
\item $a_R$ is $G$-invariant: $a_R(g\cdot p) = a_R (p)$ for all
$(g,p) \in G_1 \times_{H_0} P$;
\item the actions of $G$ and $H$ commute.
\end{enumerate}
\end{definition}
If $P$ is a bibundle from a groupoid $G$ to a groupoid $H$ we write
$P:G\to H$.

\begin{definition}\label{def:iso-bibund}
Two bibundles $P, Q:G\to H$ are {\em isomorphic} if there is a
diffeomorphism $\alpha: P\to Q$ which is $G-H$ equivariant: $\alpha
(g\cdot p\cdot h) = g\cdot \alpha (p) \cdot h$ for all $(g, p, h) \in
G_1 \times _{G_0} P \times _{H_0}H_1$.
\end{definition}

\begin{remark}[bibundles defined by functors]
By Remark~\ref{rm:bibbundle-from-functor} any functor $f:G\to H$ defines
a bibundle
\[
\langle f\rangle:= f_0^*H_1 = G_0 \times _{f, H_0, t}H_1: G\to H.
\]
The bibundle $\langle id_G\rangle $ corresponding to the identity
functor $id_G:G\to G$ is $G_1$ with $G$ acting on $G_1$ by left and
right multiplications.

Note that $\langle f\rangle \to G_0$ has a global section $\sigma (x)
:= (x, f(1_x))$.
\end{remark}

\begin{example}\label{ex:man-as-groupoid}  A map $f:M\to N$ between
two manifolds  tautologically
defines a functor $f:\{M\toto M\}\to \{N\toto N\}$.  The corresponding
bibundle $\langle f\rangle $ is simply the graph $\graff(f)$ of
$f$. It is not hard to show that a converse is true as well: any
bibundle $P:
\{M\toto M\}\to \{N\toto N\}$ is a graph of a function $f_P: M\to N$.

Note also that given  two maps $f:M\to N$, $g:M' \to N$, an equivariant map of
bibundles $\phi:\graff(f) \to \graff(g)$ has to be of the form $\phi
(x, f(x)) = (h(x), g(h(x)))$ for some map $h: M\to M'$. That is,
$\phi: \graff(f) \rightarrow  \graff(g)$ corresponds to $h:M\to M'$
with the diagram
$\vcenter{
 \xy
(0,6)*+{M}="1"; (0,-6)*+{M'}="2";
(12,0)*+{N} ="3";
{\ar@{->}_{h} "1";"2"};
{\ar@{->}^{f} "1";"3"};
{\ar@{->}_{g} "2";"3"};
\endxy}$
commuting.  This example is also important for the embedding the category
of manifolds into the 2-category of stacks.
\end{example}

\begin{example}\label{ex}
Let $M$ be a manifold and $\Gamma$ a Lie group.  As we have seen a
number of times the manifold $M$ defines the groupoid $\{M\toto M\}$.
The group $\Gamma$ defines the action groupoid $\{\Gamma \toto *\}$ for
the action of $\Gamma$ on a point $*$.  A bibundle $P:\{M\toto M\} \to
\{\Gamma\toto *\}$ is a principal $\Gamma$-bundle over $M$.  A
bibundle $P$ is isomorphic to a bibundle of the
form $\langle f\rangle$ for some functor $f: \{M\toto M\} \to \{\Gamma
\toto *\}$ only if it has a global section, that is, only if it is
trivial.  Thus {\em there are many more bibundles than functors.}

Note, however, that any principal $\Gamma$-bundle $P\to M$ is {\em
locally } trivial.  Hence, after passing to an appropriate cover
$\phi: \cU\to M$, the bibundle $\phi^*P: \{\cU\times _M \cU\toto \cU\}
\to \{\Gamma \toto *\}$ is isomorphic to $\langle f\rangle $ for some
functor $f: \{\cU\times _M \cU\toto \cU\} \to \{\Gamma \toto *\}$.
This is a special case of Lemma~\ref{lem:fact-bibundle-functors}
below.

Note also that the functor $f :\{\cU\times _M \cU\toto \cU\} \to
\{\Gamma \toto *\}$ is a \v{C}ech 1-cocycle on $M$ with coefficients in $\Gamma$ with
respect to the cover $\cU$.
\end{example}

\begin{remark}
Bibundles can be composed: if $P:G\to H$ and $Q:H\to K$ are
bibundles, we define their composition to be the quotient of the
fiber product $P \times_{H_0} Q$ by the action of $H$:
\[
Q\circ P:= (P \times_{H_0} Q)/H.
\]
This makes sense: Since $Q\to H_0$ is a principal $K$-bundle, the
fiber product $P \times_{H_0} Q$ is a manifold.  Since the action of
$H$ on $P$ is principal, the action of $H$ on $P \times_{H_0} Q$
given by $(p, q) \cdot h = (p\cdot h, h\inv \cdot q)$ is free and
proper.   Hence the quotient $(P \times_{H_0} Q)/H$
is a manifold.  Since the action of $H$ on $P \times_{H_0} Q$
commutes with the actions of $G$ and $K$, the quotient $(P
\times_{H_0} Q)/H$ inherits the actions of $G$ and $K$.  Finally,
since $Q\to H_0$ is a principal $K$-bundle, $(P \times_{H_0} Q)/H
\to G_0$ is a principal $K$-bundle.
\end{remark}

\begin{remark}
The composition of bibundles is not strictly associative: if $P_1,
P_2, P_3$ are three bibundles then $P_1 \circ (P_2 \circ P_3) $ is not
the same manifold as $(P_1 \circ P_2) \circ P_3$.  On the other hand
the two bibundles are {\em isomorphic} in the sense of
Definition~\ref{def:iso-bibund}: there is an
equivariant diffeomorphism $\alpha :P_1 \circ (P_2 \circ P_3) \to (P_1
\circ P_2) \circ P_3$.  This is the reason why we end up with a weak
2-category when we replace functors by bibundles.
\end{remark}

\begin{remark}
A natural transformation $\alpha: f\Rightarrow g$ between two functors
$f,g:K\to L$ gives rise to an isomorphism $\langle \alpha \rangle:
\langle f \rangle
\to \langle g \rangle $ of the corresponding bibundles.
\end{remark}

\begin{remark}
If a bibundle $P:G\to H$ is $G$-principal, then it defines a bibundle
$P\inv:H \to G$: switch the anchor maps, turn the left $G$-action into
the right $G$-action and the right $H$-action into a left
$H$-action. Indeed, the compositions $P\inv \circ P$ and $P\inv \circ
P$ are isomorphic to $\langle id_G\rangle $ and $\langle id_H\rangle $
respectively.
\end{remark}

\noindent
We summarize (without proof):
\begin{enumerate}
\item The collection (Lie groupoids, bibundles, isomorphisms of
bibundles) is a weak 2-category.  We denote it by $\Bi$.
\item The strict 2-category of Lie groupoids, smooth functors and natural
transformations embeds into $\Bi$.  For this reason bibundles are
often refered to as ``generalized morphisms."
\end{enumerate}

The lemma below allows us to start justifying our notions of
equivalence of Lie groupoids.
\begin{lemma}\label{lem:equiv-invert}
A functor $f:G\to H$ is an equivalence of Lie groupoids iff the
corresponding bibundle $\langle f \rangle: G\to H$ is $G$-principal,
hence (weakly) invertible.
\end{lemma}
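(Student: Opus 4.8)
The plan is to unwind both sides of the claimed equivalence into concrete conditions on the manifold $\langle f\rangle = G_0 \times_{f_0, H_0, t} H_1$ and match them term by term. Recall that $\langle f\rangle$ carries the left $G$-action $g\cdot(x,h) = (t(g), f_1(g)h)$ with anchor $a_L(x,h) = x$ and the right $H$-action $(x,h)\cdot k = (x, hk)$ with anchor $a_R(x,h) = s(h)$, and that by construction it is already a principal $H$-bundle over $G_0$. Hence the content of ``$\langle f\rangle$ is $G$-principal'' is precisely that $a_R:\langle f\rangle \to H_0$ is a principal (left) $G$-bundle, which unwinds to: (i) $a_R$ is a surjective submersion, and (ii) the map $G_1\times_{s,G_0,a_L}\langle f\rangle \to \langle f\rangle \times_{a_R,H_0,a_R}\langle f\rangle$, $(g,p)\mapsto (g\cdot p, p)$, is a diffeomorphism (this encodes both freeness and transitivity of the $G$-action on the fibers of $a_R$; $G$-invariance of $a_R$ is automatic, being part of the bibundle axioms).

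I would first dispose of (i): since $a_R(x,h) = s(h)$, condition (i) is literally condition (2) of Definition~\ref{def:equiv-Lie-gpoid}, the requirement that $G_0\times_{f,H_0,t}H_1 \to H_0$, $(x,h)\mapsto s(h)$ be a surjective submersion. So (i) holds if and only if $f$ satisfies the ``essentially surjective'' half of the definition of an equivalence.

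For (ii) I would use the canonical global section $\sigma: G_0\to\langle f\rangle$, $\sigma(x) = (x,1_{f_0(x)})$, so that every point of $\langle f\rangle$ is $\sigma(x)\cdot h$ for the point $(x,h)$. A one-line check gives $g\cdot(\sigma(x)\cdot h) = \sigma(t(g))\cdot(f_1(g)h)$, so in the coordinates $(x,h)$ the map in (ii) is $(g,(x,h))\mapsto \big((t(g), f_1(g)h),(x,h)\big)$, with $x = s(g)$. Via the diffeomorphism $\langle f\rangle\times_{H_0}\langle f\rangle \cong \big((G_0\times G_0)\times_{(f,f),(H_0,H_0),(s,t)}H_1\big)\times_{G_0}\langle f\rangle$ given by $\big((x',h'),(x,h)\big)\mapsto \big((x,x',h'h\inv),(x,h)\big)$ (inverse $\big((x,x',\eta),(x,h)\big)\mapsto\big((x',\eta h),(x,h)\big)$), the map in (ii) is identified with $\Psi\times_{G_0}\mathrm{id}_{\langle f\rangle}$, where $\Psi: G_1\to (G_0\times G_0)\times_{(H_0,H_0)}H_1$ is $g\mapsto(s(g),t(g),f_1(g))$ --- exactly the map in condition (1) of Definition~\ref{def:equiv-Lie-gpoid}. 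Since $\pi:\langle f\rangle\to G_0$ is a surjective submersion, $\Psi\times_{G_0}\mathrm{id}$ is a diffeomorphism iff $\Psi$ is: the forward direction is clear, and for the converse one recovers $\Psi\inv$ (smoothly) by restricting the inverse of $\Psi\times_{G_0}\mathrm{id}$ along the section $\sigma$. Thus (ii) holds iff $f$ is fully faithful in the sense of the definition, and combining with the previous paragraph, $f$ is an equivalence iff $\langle f\rangle$ is $G$-principal.

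Finally, if $\langle f\rangle$ is $G$-principal, then by the remark recalled in Section~\ref{bi} it admits an inverse bibundle $\langle f\rangle\inv: H\to G$ with $\langle f\rangle\inv\circ\langle f\rangle\cong\langle \mathrm{id}_G\rangle$ and $\langle f\rangle\circ\langle f\rangle\inv\cong\langle \mathrm{id}_H\rangle$, i.e. $\langle f\rangle$ is weakly invertible in $\Bi$. The main obstacle is really the bookkeeping in step (ii): one must make sure that condition (1) of the equivalence definition and the freeness-plus-transitivity requirement in the definition of a principal bundle become the very same diffeomorphism condition after trivializing $\langle f\rangle$ by $\sigma$, and in particular that no hidden smoothness or regularity hypothesis is lost in the translation. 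The rest is a direct unwinding of definitions.
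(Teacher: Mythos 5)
Your proposal is correct and follows essentially the same route as the paper: both identify condition (2) of Definition~\ref{def:equiv-Lie-gpoid} with $a_R$ being a surjective submersion, and condition (1) with the free-and-transitive (principality) requirement for the $G$-action on the fibers of $a_R$. Your packaging of the second step --- conjugating the action map into $\Psi\times_{G_0}\mathrm{id}$ by an explicit diffeomorphism and recovering $\Psi\inv$ smoothly by restricting along the canonical section $\sigma(x)=(x,1_{f(x)})$ --- is a tidier rendering of the paper's direct ``there is a unique $g$ depending smoothly on the data'' argument, but it is the same computation.
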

\begin{proof}
Recall that a functor $f:G\to H$ is an equivalence of Lie groupoids iff two conditions hold (cf. Definition~\ref{def:equiv-Lie-gpoid}):
\begin{enumerate}
\item the  map
\[
\varphi : G_1 \to (G_0\times G_0)\times_{(f,f),(H_0,H_0), (s,t)} H_1,
\quad \varphi(\gamma)= (s(\gamma), t(\gamma), f(\gamma))
\]
 is a diffeomorphism and
\item  the map
$b: G_0 \times_{F, H_0, t} H_1 {\to } H_0$, $b(x,h)=
s(h)$ is a surjective submersion.
\end{enumerate}
Recall also that $
\langle f \rangle = G_0 \times_{f, H_0, t} H_1$ and that the right anchor
$a_R: \langle f\rangle \to H_0$ is precisely the map $b$, while the
left anchor is the projection on the first factor: $a_L (x,h) = x$.
Tautologically $a_R$ is a surjective submersion iff $b$ is a
surjective submersion.

Suppose that $G$ acts freely and transitively on the fibers of $a_R:
\langle f\rangle \to H_0$.  That is, suppose $a_R: \langle f\rangle
\to H_0$ is a principal $G$-bundle. Then the map
\[
\psi: G_1\times _{s, G_0 , a_L} ( G_0 \times_{f, H_0, t} H_1)
\to \langle f \rangle \times _{H_0}\langle f \rangle
\quad \psi (g, x, h) =((x, h), (t(g), f(g) h'))
\]
is a diffeomorphism.  Hence it has a smooth inverse.  Thus for any
$(x,h), (x',h') \in G_0\times H_1$ with $f(x) = t(h)$, $f(x') = t(h')$
and $s(h) = s(h')$ there is a unique $g\in G_1$ depending smoothly on
$x,x',h$ and $h'$ with $s(g) =x$, $t(g) = x'$ and $h' = f(g)h$.
Therefore for any $x,y\in G_0$ and any $h'\in H_1$ with $s(h') = f(x)$
and $t(h') = f(y)$ there is a unique $g\in G_1$ depending smoothly on
$x,y$ and $h'$ so that $h' = f(g)1_{f(x)}$.  That is, the map
\[
\varphi: G_1 \to (G_0 \times G_0) \times _{(f,f), H_0\times H_0, (s,t)}H_1
\]
has a smooth inverse.  Therefore if $\langle f\rangle \to H_0$ is left
$G$-principal bundle then $f$ is an equivalence of Lie groupoids.

Conversely suppose $\varphi$ has a smooth inverse.  Then for any
$((x,h), (x',h')) \in \langle f\rangle \times _{H_0} \langle f\rangle$
there is a unique $g\in G_1$ with $s(g) = x'$, $t(g) = x$ and $f(g) =
h (h')\inv$.  Hence the map $\psi $ has a smooth inverse.  Therefore,
if $f:G\to H$ is an equivalence of Lie groupoids, then $\langle
f\rangle \to H_0$ is left $G$-principal bundle.
\end{proof}

\begin{corollary}
Let $G$ be a groupoid and $\phi: \cU\to G_0$ a cover (a surjective
local diffeomorphism).  The the bibundle $\langle \tilde{\phi}\rangle$
defined by the induced functor $\tilde{\phi}: \phi^*G \to G$ is
invertible.
\end{corollary}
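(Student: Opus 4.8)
The plan is to reduce the statement to two results already established: Remark~\ref{rk:pullback-equiv}, which says that if the map $(x,g)\mapsto t(g)$ out of the fiber product $N\times_{f,G_0,s}G_1$ is a surjective submersion then $\tilde f\colon f^*G\to G$ is an equivalence of Lie groupoids, and Lemma~\ref{lem:equiv-invert}, which says that the bibundle $\langle f\rangle$ attached to an equivalence of Lie groupoids is principal over its source, hence weakly invertible in $\Bi$. So the only thing left to verify is the hypothesis of Remark~\ref{rk:pullback-equiv} for $f=\phi\colon\cU\to G_0$.

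Concretely, I would examine the map
\[
q\colon \cU\times_{\phi,G_0,s} G_1 \to G_0,\qquad q(x,g)=t(g).
\]
For surjectivity: given $y\in G_0$, the unit arrow $1_y$ has $s(1_y)=y$, and since $\phi$ is surjective there is $x\in\cU$ with $\phi(x)=y=s(1_y)$, so $(x,1_y)$ lies in the fiber product and $q(x,1_y)=t(1_y)=y$. For the submersion property: since $s\colon G_1\to G_0$ is a submersion, the fiber product $\cU\times_{\phi,G_0,s}G_1$ is a manifold, and the second projection $\pr_2\colon \cU\times_{G_0}G_1\to G_1$ is the pullback of the local diffeomorphism $\phi$ along $s$, hence itself a local diffeomorphism. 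Therefore $q=t\circ\pr_2$ is the composite of a local diffeomorphism with the submersion $t\colon G_1\to G_0$, so $q$ is a (surjective) submersion. Note that this also verifies the hypothesis of Proposition~\ref{prop1}, so $\phi^*G$ is genuinely a Lie groupoid and $\tilde\phi\colon\phi^*G\to G$ a smooth functor, which is needed before we may speak of the bibundle $\langle\tilde\phi\rangle$ at all.

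With $q$ a surjective submersion in hand, Remark~\ref{rk:pullback-equiv} gives that $\tilde\phi\colon\phi^*G\to G$ is an equivalence of Lie groupoids, and then Lemma~\ref{lem:equiv-invert} gives that $\langle\tilde\phi\rangle\colon\phi^*G\to G$ is $(\phi^*G)$-principal, hence weakly invertible. That is exactly the assertion of the corollary. I do not expect any serious obstacle here; the construction is essentially an unwinding of definitions, and the only mild points of care are (i) reading ``invertible'' in the statement as weakly invertible in the weak $2$-category $\Bi$, and (ii) checking that $\phi^*G$ is a Lie groupoid before invoking the earlier lemmas — both dispatched by the submersion computation above.
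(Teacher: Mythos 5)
Your proof is correct and follows the same route as the paper: establish that $\tilde\phi\colon\phi^*G\to G$ is an equivalence of Lie groupoids and then invoke Lemma~\ref{lem:equiv-invert}. The only difference is that the paper dismisses the first step with ``we have seen that $\tilde\phi$ is an equivalence,'' whereas you explicitly verify the surjective-submersion hypothesis of Remark~\ref{rk:pullback-equiv} (and of Proposition~\ref{prop1}), which is a worthwhile bit of added care but not a different argument.
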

\begin{proof}
We have seen  that the functor $\tilde{\phi}: \phi^*G \to G$
is an equivalence.  The result follows from
Lemma~\ref{lem:equiv-invert} above.
\end{proof}

\begin{lemma}\label{lem:global-sec}
Let $P:G\to H$ be a bibundle from a groupoid $G$ to a groupoid $H$.
Then $P$ is isomorphic to $\langle f\rangle$ for some functor $f:G\to
H$ if and only if $a_L:P\to G_0$ has a global section.
\end{lemma}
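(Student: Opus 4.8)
The plan is to prove the two implications separately; the forward one is immediate and the reverse one is the substantive direction. \emph{If $P\cong\langle f\rangle$ then $a_L$ has a global section.} The remark on bibundles defined by functors records that $\langle f\rangle\to G_0$ carries the global section $x\mapsto(x,f(1_x))$. An isomorphism of bibundles is in particular a diffeomorphism over $G_0$, so transporting this section through it produces a global section of $a_L\colon P\to G_0$.

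\emph{If $a_L$ has a global section $\sigma\colon G_0\to P$, then $P\cong\langle f\rangle$ for a functor $f$.} First I would set $f_0(x):=a_R(\sigma(x))$. To define $f_1$, note that for $g\in G_1$ with $s(g)=x$ and $t(g)=y$ the points $g\cdot\sigma(x)$ and $\sigma(y)$ both lie in the fiber $a_L^{-1}(y)$ (using the left-action axiom $a_L(g\cdot p)=t(g)$ and the section property), so since $a_L\colon P\to G_0$ is a principal $H$-bundle there is a unique $f_1(g)\in H_1$ with $g\cdot\sigma(x)=\sigma(y)\cdot f_1(g)$; in terms of the division map $d\colon P\times_{G_0}P\to H_1$ from the proof of Lemma~\ref{lem:princH-global} we have $f_1(g)=d\big(\sigma(t(g)),\,g\cdot\sigma(s(g))\big)$, which is smooth. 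Then I would check that $f=(f_0,f_1)$ is a functor. The $G$-invariance of $a_R$ gives $s(f_1(g))=a_R(g\cdot\sigma(x))=a_R(\sigma(x))=f_0(x)$, and $t(f_1(g))=a_R(\sigma(y))=f_0(y)$ (needed for $\sigma(y)\cdot f_1(g)$ to be defined), so $f_1$ respects source and target; from $1_x\cdot\sigma(x)=\sigma(x)=\sigma(x)\cdot 1_{f_0(x)}$ and uniqueness, $f_1(1_x)=1_{f_0(x)}$; and for composable $g_1,g_2$ with $s(g_2)=t(g_1)$, expanding $(g_2g_1)\cdot\sigma(x)$ using associativity of the left $G$-action and then commutativity of the $G$- and $H$-actions gives $(g_2g_1)\cdot\sigma(x)=\sigma(z)\cdot\big(f_1(g_2)f_1(g_1)\big)$, whence $f_1(g_2g_1)=f_1(g_2)f_1(g_1)$ by uniqueness. (Preservation of inverses is automatic for a functor between groupoids.)

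It remains to produce an isomorphism $P\cong\langle f\rangle=G_0\times_{f_0,H_0,t}H_1$. I would define $\varphi\colon P\to\langle f\rangle$ by $\varphi(p)=\big(a_L(p),\,d(\sigma(a_L(p)),p)\big)$; equivalently $\varphi(p)=(a_L(p),h)$ where $h$ is the unique arrow with $p=\sigma(a_L(p))\cdot h$. It is smooth (smoothness of $d$), it lands in $\langle f\rangle$ because $t(h)=a_R(\sigma(a_L(p)))=f_0(a_L(p))$, and it induces the identity on $G_0$. It is $H$-equivariant since $p\cdot h'=\sigma(a_L(p))\cdot(hh')$, and $G$-equivariant since $g\cdot p=g\cdot(\sigma(x)\cdot h)=(\sigma(y)\cdot f_1(g))\cdot h=\sigma(y)\cdot(f_1(g)h)$ when $s(g)=a_L(p)=x$. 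Both $a_L\colon P\to G_0$ and $a_L\colon\langle f\rangle\to G_0$ are principal $H$-bundles and $\varphi$ is an $H$-equivariant map over $G_0$, so Corollary~\ref{cor:g-equiv-iso} (applied with its ``$G$'' taken to be our $H$) shows $\varphi$ is a diffeomorphism; alternatively, $\psi(x,h):=\sigma(x)\cdot h$ is an explicit smooth two-sided inverse. Being also $G$-equivariant, $\varphi$ is an isomorphism of bibundles.

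I expect the only real difficulty to be bookkeeping: keeping the left/right actions, the source/target conventions, and the orientation of the division map consistent, so that functoriality of $f$ and equivariance of $\varphi$ come out correctly. Once the conventions are fixed, each verification is a one-line manipulation from uniqueness of the division map together with the commuting-actions axiom, and the sole external input is Corollary~\ref{cor:g-equiv-iso} (or the explicit inverse $\psi$) used to upgrade the equivariant bijection $\varphi$ to a diffeomorphism.
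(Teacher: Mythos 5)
Your proof is correct and follows essentially the same route as the paper's: both directions hinge on the global section together with the division map $d$ from Lemma~\ref{lem:princH-global}, with $f$ read off from the left $G$-action via $g\cdot\sigma(s(g))=\sigma(t(g))\cdot f_1(g)$. If anything you are slightly more complete than the paper, which reduces to $P=G_0\times_{\phi,H_0,t}H_1$ by Lemma~\ref{lem:princH-global} and leaves the $G$-equivariance of that identification implicit, whereas you exhibit the bibundle isomorphism $\varphi$ and verify both equivariances explicitly.
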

\begin{proof}
We have seen that for a functor $f:G\to H$ the map $a_L: G_0\times
_{H_0}H_1 \to G_0$ has a global section.

Conversely, suppose we have a bibundle $P:G\to H$ and the principal
$H$-bundle$a_L: P\to G_0$ has a global section. Then by
Lemma~\ref{lem:princH-global} the bundle $P\to G_0$ is isomorphic to
$G_0 \times _{\phi, H_0, t} H_1$ for some map $\phi:G_0 \to H_0$.  Therefore 
we may assume that $P=G_0 \times _{\phi, H_0, t} H_1$.  Now the left
action of $G$ on $P$ defines a map $f:G_1 \to H_1$ by
\[
g\cdot (t(g) 1_{\phi(t(g)} ))= (s(g), 1_{\phi(s(g))})\cdot f(g).
\]
The map $f$ is well defined since the action of $H$ is principal.
Finally the map $f$ preserves multiplication:
if $z\stackrel{g_2}{\to } y \stackrel{g_1 }{\to } x$ are two composable arrows in $G_1$ then, on one hand,
\[
g_2 \ (g_1 \cdot (x, 1_{\phi(x)} ) = g_2 \cdot ( y, 1_{\phi(y)} ) \cdot
f(g_1) = ((z, 1_{\phi (z)}) \cdot f(g_2) )\cdot f(g_1)
\]
and on the other
\[
(g_2 g_1)  \cdot (x, 1_{\phi(x)} ) = (z, 1_{\phi(z)})\cdot f(g_2g_1).
\]
Hence $f(g_2) f(g_1) = f(g_2 g_1)$, that is, $f$ is a functor.
\end{proof}

\begin{lemma} \label{lem:fact-bibundle-functors}
Let $P:G\to H$ be a bibundle from a groupoid $G$ to a groupoid $H$.
There is a cover $\phi: \cU\to G_0$ and a functor $f: \phi^*G \to H$
so that
\[
    P\circ \langle \tilde{\phi}\rangle \stackrel{\simeq}{\to} \langle f\rangle,
\]
where $\tilde{\phi}: \phi^*G \to G$ is the induced functor and
$\stackrel{\simeq}{\to}$ an isomorphism of bibundles.
\end{lemma}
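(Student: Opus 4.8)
The plan is to reduce Lemma~\ref{lem:fact-bibundle-functors} to Lemma~\ref{lem:global-sec} by choosing a cover $\phi:\cU\to G_0$ over which the principal $H$-bundle $a_L:P\to G_0$ trivializes, and then checking that pulling $P$ back along $\tilde{\phi}$ produces precisely that trivialized bundle as the left anchor of the composite bibundle $P\circ\langle\tilde{\phi}\rangle$. First I would recall that $a_L:P\to G_0$ is a surjective submersion, so it admits local sections: for each $x\in G_0$ there is an open $U_x\ni x$ and a smooth section $\sigma_x:U_x\to P$ of $a_L$. Setting $\cU=\bigsqcup_x U_x$ and letting $\phi:\cU\to G_0$ be the evident surjective local diffeomorphism, we get by Definition~\ref{def-pull-back} the pullback groupoid $\phi^*G$ and the induced functor $\tilde{\phi}:\phi^*G\to G$, which is an equivalence (Remark~\ref{rk:pullback-equiv}), so $\langle\tilde{\phi}\rangle:\phi^*G\to G$ is $\phi^*G$-principal and in particular invertible.

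Next I would compute the composite $P\circ\langle\tilde{\phi}\rangle$. Recall $\langle\tilde{\phi}\rangle = \cU\times_{\phi,G_0,t}G_1$ with left $\phi^*G$-action and right $G$-action, and $P\circ\langle\tilde{\phi}\rangle = (\langle\tilde{\phi}\rangle\times_{G_0}P)/G$, where $G$ acts on the fiber product by $((u,g),p)\cdot g' = ((u,gg'),g'^{-1}\cdot p)$. The key observation is that each $G$-orbit in $\langle\tilde{\phi}\rangle\times_{G_0}P$ has a canonical representative with $g=1_{\phi(u)}$, so that $P\circ\langle\tilde{\phi}\rangle$ is diffeomorphic to $\cU\times_{\phi,G_0,a_L}P$ as a manifold, with left anchor the projection to $\cU$ and right $H$-action inherited from $P$. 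This identifies the left anchor of $P\circ\langle\tilde{\phi}\rangle$ with the pullback principal $H$-bundle $\phi^*(a_L:P\to G_0)$ over $\cU$.

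Now, over each component $U_x$ of $\cU$ the section $\sigma_x$ provides a global section of this pulled-back bundle, so $a_L: P\circ\langle\tilde{\phi}\rangle \to (\phi^*G)_0 = \cU$ has a global section. Applying Lemma~\ref{lem:global-sec} to the bibundle $P\circ\langle\tilde{\phi}\rangle:\phi^*G\to H$ yields a functor $f:\phi^*G\to H$ with $P\circ\langle\tilde{\phi}\rangle\xrightarrow{\simeq}\langle f\rangle$, which is exactly the claim. The main obstacle is the bookkeeping in the second paragraph: one must verify carefully that the map $\langle\tilde{\phi}\rangle\times_{G_0}P\to \cU\times_{\phi,G_0,a_L}P$, $((u,g),p)\mapsto (u,g\cdot p)$, is $G$-invariant, descends to a diffeomorphism on the quotient, and intertwines both the residual left $\phi^*G$-action and the right $H$-action; checking it respects the anchors and the principality of the left action is routine but must be done with the definitions from Section~\ref{bi} in hand. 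Everything else is a direct appeal to the two preceding lemmas.
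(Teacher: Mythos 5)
Your proposal is correct and follows essentially the same route as the paper: choose local sections of the principal $H$-bundle $a_L:P\to G_0$, pass to the cover $\phi:\cU\to G_0$, observe that the pulled-back bundle has a global section, and invoke Lemma~\ref{lem:global-sec}. The only difference is that you make explicit the identification of $P\circ\langle\tilde{\phi}\rangle$ with $\phi^*P$ via $((u,g),p)\mapsto(u,g\cdot p)$, a step the paper's proof leaves implicit by writing $\phi^*P$ directly; your verification of that identification is accurate.
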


\begin{proof}
Since $a_L: P\to G_0$ is an $H$-principal bundle, it has local
sections $\sigma_i: U_i \to P$ with $\bigcup U_i = G$.  Let $\cU =
\bigsqcup U_i$ and $\phi: \cU\to G_0$ be the inclusion.  Then
$\phi^*P\to \cU$ has a global section.  Hence, by
Lemma~\ref{lem:global-sec} there is a functor $f:\phi^*G \to H$ with
$\langle f\rangle =\phi^*P$.
\end{proof}

\subsection{Hilsum-Skandalis category of Lie groupoids}

Recall that $\Bi$ denotes the weak 2-category with objects Lie
groupoids, 1-arrows bibundles and 2-arrows equivariant maps between
bibundles.  The 2-arrows are always invertible.  Recall that $\Gr$
denotes the (2-)category of Lie groupoids, functors and natural
transformations.
\begin{definition}
Define the 1-category $\Gp$ to be the category with objects Lie
groupoids and arrows the {\em isomorphism classes} $[f]$ of smooth
functors.

Define the 1-category $\HS$ (for Hilsum and Skandalis
\cite{HilsumSkan}, who invented it) to be the category constructed out
of $\Bi$ by identifying isomorphic bibundles.
\end{definition}
 There is an evident
functor $\tilde{z}: \Gr \to \HS$ which is the identity on objects and
takes a functor $f$ to the equivalence class of the bibundle $\langle
f\rangle $ defined by $f$: $\tilde{z}(f) = [\langle f\rangle ]$.
Clearly it drops down to a faithful functor
\[
z: \Gp \to \HS, \quad z(G\stackrel{[f]}{\to} H) =
(G\stackrel{[\langle f\rangle ]}{\to} H).
\]
By abuse of notation let $W$ denote the collection of isomorphism
classes of equivalences in $\Gp$:
\[
W = \{ [w]\mid w\in \Gr_1 \textrm{ is an equivalence}\}
\]

\begin{proposition}\label{prop:HSloc}
The functor $z: \Gp \to \HS$ defined above localizes $\Gp$ at the class
of equivalences $W$.  That is, $z$ induces an equivalence of
categories $\Gp[W\inv] \to \HS$.
\end{proposition}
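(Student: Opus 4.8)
The plan is to verify the universal property of the localization directly for $z\colon \Gp \to \HS$: show that $z$ sends equivalences to isomorphisms, and that any functor out of $\Gp$ inverting the equivalences factors uniquely through $z$. The first part is already essentially done: by Lemma~\ref{lem:equiv-invert}, if $w\colon G\to H$ is an equivalence of Lie groupoids then $\langle w\rangle$ is $G$-principal, hence weakly invertible in $\Bi$, so $[\langle w\rangle]$ is an isomorphism in $\HS$; thus $z([w])$ is invertible. For the factorization, suppose $\phi\colon \Gp \to E$ is a functor with $\phi([w])$ invertible for all $[w]\in W$. We must produce a unique $\psi\colon \HS \to E$ with $\psi\circ z = \phi$. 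The key structural input is Lemma~\ref{lem:fact-bibundle-functors}: every bibundle $P\colon G\to H$ fits into an isomorphism $P\circ \langle \tilde\phi\rangle \xrightarrow{\simeq} \langle f\rangle$ for some cover $\phi\colon \cU\to G_0$ and functor $f\colon \phi^*G\to H$. Since $\tilde\phi$ is an equivalence (Remark~\ref{rk:pullback-equiv}), $[\langle \tilde\phi\rangle]$ is invertible in $\HS$, so in $\HS$ we have $[P] = [\langle f\rangle]\circ [\langle \tilde\phi\rangle]\inv = z([f])\circ z([\tilde\phi])\inv$. This forces the definition $\psi([P]) := \phi([f])\circ \phi([\tilde\phi])\inv$ on morphisms, and $\psi := \mathrm{id}$ on objects; uniqueness of $\psi$ is then immediate from this formula together with $\psi\circ z = \phi$.

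The substance of the proof is checking that this $\psi$ is well defined and functorial. Well-definedness has two layers. First, $[P]$ does not determine $(\cU, \phi, f)$, so I must show that two different factorizations $[P] = z([f])\circ z([\tilde\phi])\inv = z([f'])\circ z([\tilde{\phi'}])\inv$ give the same element of $E$ after applying $\phi$. The standard device is to pass to a common refinement: given covers $\phi\colon \cU\to G_0$ and $\phi'\colon \cU'\to G_0$, form $\cU\times_{G_0}\cU'$, which covers both, and compare the induced functors there; the relation in $\Gp$ between $f$, $f'$ and the refinement maps is witnessed by natural isomorphisms (hence equalities in $\Gp$), and $\phi$ respects those. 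Second, I should check the construction is independent of the choice of representative bibundle in its isomorphism class, which is immediate since isomorphic bibundles yield the same $f$ up to isomorphism of functors, i.e.\ the same class in $\Gp_1$.

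For functoriality, $\psi$ clearly preserves identities (take $\phi = \mathrm{id}_{G_0}$, $f = \mathrm{id}_G$). Compatibility with composition is where I expect the real work: given $[P]\colon G\to H$ and $[Q]\colon H\to K$ with factorizations through covers $\phi\colon \cU\to G_0$ and $\psi\colon \cV\to H_0$, I need to realize $[Q\circ P]$ through a single cover of $G_0$ and relate the resulting functor to $f_Q$ and $f_P$. The trick is to further pull $\cU$ back along the (open, surjective) map underlying the composite $\langle\tilde\psi\rangle\inv\circ\langle f_P\rangle$ so that the section of $P$ over $\cU$ is compatible with the section of $Q$ over $\cV$; on that common refinement $\cU'$ one gets $\langle f_{Q\circ P}\rangle \simeq \langle f_Q\circ (\text{lift of } f_P)\rangle$ as bibundles, which translates into the functor identity needed. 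This is a diagram chase through the definitions of bibundle composition and of $\langle -\rangle$; it is routine but fiddly, and it is the main obstacle. Once $\psi$ is a functor with $\psi\circ z = \phi$, the universal property of $\Gp[W\inv]$ gives a unique functor $\Gp[W\inv]\to \HS$; combined with the fact (from Proposition~\ref{prop:HSloc}'s setup) that $z$ already inverts $W$, standard uniqueness-of-localization (Lemma preceding Notation~for $C[W\inv]$) yields that this comparison functor is an equivalence $\Gp[W\inv]\xrightarrow{\ \sim\ }\HS$.
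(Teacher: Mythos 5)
Your proposal is correct and follows essentially the same route as the paper: verify the universal property of the localization directly, using Lemma~\ref{lem:equiv-invert} to see that $z$ inverts $W$ and Lemma~\ref{lem:fact-bibundle-functors} to factor an arbitrary bibundle as $z([f])\circ z([w])\inv$, which forces the definition of $\Psi$ on morphisms. The paper checks well-definedness by factoring the comparison morphism between two such factorizations via another application of Lemma~\ref{lem:fact-bibundle-functors} (your common-refinement of covers is the concrete form of the same device), and, like you, it leaves the verification that $\Psi$ preserves composition as ``a similar argument.''
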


\begin{proof}
By Lemma~\ref{lem:equiv-invert}, $z([w])$ is invertible in $\HS$ for
any equivalence $w$.  Thus the content of the Proposition is the
universal property of the functor $z: \Gp \to HS$.  Suppose $\Phi: \Gp
\to \sfE$ is a functor that sends isomorphism classes of equivalences
to invertible arrows.  We want to construct a functor $\Psi: HS \to \sfE$ so that
\[
  \Psi \circ z = \Phi.
\]
As the first step, for an object $G\in \HS_0$ define $\Psi (G) = \Phi (G)$.
Next let $P: G\to H$ be a bibundle.  We want to define $\Psi
([P])$. By Lemma~\ref{lem:fact-bibundle-functors} we can factor $P$ as
\[
P \simeq \langle f' \rangle\circ \langle w' \rangle \inv
\]
for some equivalence $w':G' \to G$ and a functor $f': G'\to G$.
Define
\[
\Psi ([P]) = \Phi([ f' ]) \Phi([w' ]) \inv .
\]
We need to check that this is well defined and that $\Psi$ preserves
compositions.
Suppose $w'':G'' \to G$ and $f'':G''\to G$ is another choice of factorization.
Let
\[
[Q] = z[w']\inv z[w'] :G'' \to G'.
\]
Then $[Q]$ can be factored as well:
\[
[Q] = z([g]) z([\tilde{w}])\inv
\]
for some equivalence $\tilde{w}:\tilde{G} \to G''$ and some functor
$g:\tilde{G} \to G'$.
The diagram
\[
\xy%@=18pt
(-16,0)*+{\tilde{G}}="1";
(-1,19)*+{G'}="2";
(-1,-19)*+{G''}="3";
(21,0)*+{G}="4";
(40,0)*+{H}="5";
{\ar@{->}^{z([g])}  "1";"2"}; {\ar@{->}_{z([w'])}  "2";"4"};
{\ar@{->}_{z([\tilde{w}])}  "1";"3"}; {\ar@{->}^{[Q]}  "3";"2"};
{\ar@{->}^{z([w''])}  "3";"4"};{\ar@{->}^{[P]}  "4";"5"};
 {\ar@{->}^{z([f'])}  "2";"5"};
{\ar@{->}_{z([f''])} "3";"5"};
\endxy
\]
commutes in $\HS$.  Hence
\begin{equation}\label{eq4.1}
z([f'']) z([\tilde{w}]) = z([f']) z ([g]).
\end{equation}
Since $z$ is faithful,
\[
[f''][\tilde{w}] = [f'][g]
\]
in $\Gp$.  Hence, in $\sfE$,
\[
\Phi([f'']) \Phi([\tilde{w}] = \Phi([f'])\Phi([g])= \Phi([f'])\Phi([w'])\inv \Phi([w'])\Phi([g] = \Phi([f']) \Phi([w'])\inv \Phi ([w'']) \Phi([\tilde{w}]),
\]
where we used the fact that $z$ is faithful and (\ref{eq4.1}).
Since $\Phi([\tilde{w}])$ is invertible,
\[
\Phi([f'']) = \Phi([f']) \Phi([w'])\inv \Phi ([w'']).
\]
Therefore
\[
\Phi([f'']) \Phi ([w''])\inv = \Phi([f']) \Phi([w'])\inv,
\]
and $\Psi$ is well-defined.

A similar argument shows that $\Psi$ preserves multiplication.
\end{proof}

\begin{definition}[Morita equivalent groupoids]  Two Lie groupoids are
{\em Morita equivalent} if there they are isomorphic in the
localization $\Gp[W\inv]$ of the category of groupoids at
equivalences.  In particular, $G$ and $H$ are Morita equivalent if
there is a bibundle $P:G\to H$ with the action of $G$ being principal.
\end{definition}

We finally come to the punchline of the section: the localization of
the category of Lie groupoids at equivalences as a
\emph{1-category} has problems.
\begin{lemma}\label{lem:trouble}
There is a cover $\{U_1, U_2\}$ of $S^1$ and two morphisms $f,g: S^1
\to \{\Z/2\toto *\}$ in $\Gp[W\inv]$ so that $f|_{U_i} = g|_{U_i}$
($i=1, 2$) but $f\not = g$.
\end{lemma}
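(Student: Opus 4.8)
The plan is to unwind the statement via Proposition~\ref{prop:HSloc} and then take for $f$ and $g$ the two isomorphism classes of principal $\Z/2$-bundles over the circle.

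First I would pass from $\Gp[W\inv]$ to the Hilsum--Skandalis category: by Proposition~\ref{prop:HSloc} the functor $z$ induces an equivalence $\Gp[W\inv]\xrightarrow{\ \simeq\ }\HS$, so it suffices to exhibit the example in $\HS$. Regarding $S^1$ as the manifold groupoid $\{S^1\toto S^1\}$, a morphism $S^1\to\{\Z/2\toto *\}$ in $\HS$ is an isomorphism class of bibundles $\{S^1\toto S^1\}\to\{\Z/2\toto *\}$, which by Example~\ref{ex} is precisely an isomorphism class of principal $\Z/2$-bundles over $S^1$. There are exactly two such classes: the trivial bundle $S^1\times\Z/2\to S^1$ and the connected double cover (namely $S^1\to S^1$, $z\mapsto z^2$, with $\Z/2$ acting antipodally), and these are not isomorphic --- e.g. their total spaces have different numbers of components, or they are distinguished by $H^1(S^1;\Z/2)\cong\Z/2$. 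Let $f$ be the class of the trivial bundle and $g$ the class of the connected double cover; then $f\neq g$ in $\HS$, hence in $\Gp[W\inv]$.

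Next I would choose the cover. Fix two distinct points $p_1,p_2\in S^1$ and set $U_1=S^1\smallsetminus\{p_1\}$, $U_2=S^1\smallsetminus\{p_2\}$, so that $\{U_1,U_2\}$ is an open cover of $S^1$ and each $U_i$ is an open arc, diffeomorphic to $\R$, in particular contractible. The inclusion $\iota_i\colon U_i\hookrightarrow S^1$ is a smooth functor $\{U_i\toto U_i\}\to\{S^1\toto S^1\}$, hence a morphism of $\Gp$ and of $\Gp[W\inv]\simeq\HS$; by Example~\ref{ex:man-as-groupoid} its associated bibundle $\langle\iota_i\rangle$ is the graph $\graff(\iota_i)$. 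By definition $f|_{U_i}=f\circ[\iota_i]$, and I would observe that under the identification of morphisms with $\Z/2$-bundles this composite is just the restriction of the bundle to $U_i$: if $P\to S^1$ is a $\Z/2$-bundle viewed as a bibundle, then, using the composition law $P\circ\langle\iota_i\rangle=(\langle\iota_i\rangle\times_{S^1}P)/\{S^1\toto S^1\}$ and the fact that $\{S^1\toto S^1\}$ has only identity arrows, one gets $P\circ\langle\iota_i\rangle\cong P|_{U_i}$ canonically.

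Finally I would invoke triviality over a contractible base: since each $U_i$ is contractible, every principal $\Z/2$-bundle over $U_i$ is trivial, so both $(S^1\times\Z/2)|_{U_i}$ and the restriction of the connected double cover to $U_i$ are isomorphic to $U_i\times\Z/2$. Hence $f|_{U_i}=g|_{U_i}$ for $i=1,2$, while $f\neq g$ in $\Gp[W\inv]$, which is the assertion of the lemma. The only step that takes any thought is the identification of $f|_{U_i}$ with the restricted $\Z/2$-bundle, and this is immediate from the bibundle composition formula together with $\langle\iota_i\rangle=\graff(\iota_i)$; everything else is the classical classification of double covers of the circle. Morally, the discrepancy between $f$ and $g$ is carried by the gluing datum on the two-component overlap $U_1\cap U_2$, where a sign twist on one component produces the nontrivial bundle but is invisible after restriction to either $U_i$.
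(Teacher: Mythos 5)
Your proposal is correct and follows the same route as the paper: pass to $\HS$ via Proposition~\ref{prop:HSloc}, identify morphisms $S^1\to\{\Z/2\toto *\}$ with isomorphism classes of principal $\Z/2$-bundles over $S^1$, take the trivial and nontrivial classes, and observe that both trivialize over a cover by two contractible arcs. The extra details you supply (the explicit arcs, the identification of $f|_{U_i}$ with the restricted bundle via the bibundle composition formula) are correct and merely make explicit what the paper leaves implicit.
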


\begin{proof}
In the category $\HS$ a morphism from a manifold $M$ (that we think of
as the groupoid $\{M\toto M\}$) to a groupoid $G$ is the equivalence
class of a bibundle $P$ from $\{M\toto M\}$ to $G$.  An action of
$\{M\toto M\}$ on $P$ is simply a map $a_L:P\to M$.  So a bibundle
from $M$ to $G$ is a principal $G$ bundle and an $\HS$ morphism from
$M$ to $G$ is the equivalence class of some principal $G$-bundle over
$M$.  Hence an $\HS$ morphism from $S^1$ to $\{\Z/2\toto *\}$ is the
class of a principal $\Z/2$ bundle over $S^1$ (cf. Example~\ref{ex}).
There are two such classes: the class of the trivial bundle $a$ and
the class of the nontrivial bundle $b$.  Now cover $S^1$ by two
contractible open sets $U_1$ and $U_2$.  Any principal $S^1$ bundle
over a contractible open set is trivial.  Therefore $a|_{U_i} =
b|_{U_i}$, $i=1,2$.  This gives us the two morphisms in $\HS$ from
$S^1$ to $\{\Z/2\toto *\}$ with the desired properties.  Let $F: \HS
\to \Gp[W\inv]$ denote an equivalence of categories, which exists by
Proposition~\ref{prop:HSloc}.  Then $f= F(a)$ and $g= F(b)$ are the
desired morphisms in $\Gp[W\inv]$.
\end{proof}

It may be instructive to note how this problem does not arise in the
weak 2-category $\Bi$.  In $\Bi$ the 1-arrows are not isomorphism
classes of bibundles but actual bibundles.  Let $P_1 \to S^1$ denote a
trivial $\Z/2$ principal bundle and $P_2 \to S^1$ a nontrivial one.
Over the open sets $U_1$, $U_2$ we have isomorphisms $\varphi_i:
P_1|_{U_i} \stackrel{\simeq}{\to} P_2|_{U_i}$, rather than equalities,
as we had with their isomorphisms classes.  These local isomorphisms
obviously do not glue together to form a global isomorphism from $P_1$
to $P_2$.  They can't, because $P_1$ and $P_2$ are not isomorphic.
And they don't because they don't agree on double intersections:
$\varphi_1|_{P_1|_{U_1 \cap U_2} } \not = \varphi_2|_{P_1|_{U_1 \cap
U_2} }$.\\

At this point we can agree that the right setting for orbifolds is the
weak 2-category $\Bi$ and declare our mission accomplished.  That is,
a smooth orbifold would be a Lie groupoid weakly isomorphic in $\Bi$
(i.e., Morita equivalent) to a proper etale Lie groupoid.  We would
call such groupoids {\em orbifold groupoids}. A map between two
orbifolds would be a smooth bibundle.  

The geometry of orbifolds would proceed along the lines of Moerdijk's
paper \cite{moer}.  For example, let us define ``vector
orbi-bundles.''  The definition is modeled on the case where the
orbifold is a manifold with an action of a finite group.  That is,
suppose a finite group $\Gamma$ acts on a manifold $M$.  A vector
bundle over the orbifold ``$M/\Gamma$'' is a $\Gamma$-equivariant
vector bundle $E\to M$.  Hence, in general, a vector bundle over an
orbifold groupoid $G$ is a vector bundle $E\to G_0$ over the space of
objects together with a linear left action of $G$ on $E$ (``linear''
means that the map $G_1 \times _{G_0} E \to E$ is a vector bundle
map).  A bit of work shows that one can pull back a vector bundle by a
bibundle.

On the other hand, there is still something awkward in this set-up,
since the composition of bibundles is not strictly associative.  This
gets particularly strange when we start thinking about flows of vector
fields, or, more generally, group actions.  For example, let the
circle $S^1$ acts on itself by translations.  Now take an open cover
$\cU\to S^1$ and form the cover groupoid $G= \{ \cU\times _{S^1} \cU
\toto \cU\}$.  The induced functor $G \to \{S^1 \toto S^1\}$ is weakly
invertible, so we get an ``action'' of $S^1$ on $G$.  The word
``action'' is in quotation marks because for any two elements of the
group $\lambda, \lambda' \in S^1$ and the corresponding isomorphisms
$\phi_\lambda, \phi_{\lambda'}: G \to G$
\[
\phi_\lambda \circ  \phi_{\lambda'} \not = \phi_{\lambda + \lambda'}.
\]
Rather,
\[
\phi_\lambda \circ  \phi_{\lambda'}
\stackrel{A}{\to} \phi_{\lambda + \lambda'}
\]
for some isomorphism of bibundles $A$ depending on $\lambda,
\lambda'$.  We get a so called {\em weak action} of $S^1$ on $G$.

The same thing happen when we try to integrate a
vector field on $G$: we don't get a flow in the sense of an action of
the reals.  We get some sort of a weak flow.  For the same reason the
action of the Lie algebra $Lie(\Gamma)$ on a proper etale Lie groupoid
$G$ with the compact coarse moduli space $G_0/G_1$ will not integrate
to the action of the Lie group $\Gamma$.  It will only integrate to a
weak action.  This is somewhat embarrassing since in literature Lie
groups routinely act on orbifolds.

There is another question that may be nagging the reader: aren't
groupoids supposed to be atlases on orbifolds, rather than being
orbifolds themselves?  There is a solution to both problems.  It
involves embedding the weak 2-category $\Bi$ into an even bigger
gadget, the 2-category of stacks $\St$.  Stacks form a strict
2-category.  This is the subject of the next and last section.  In
particular in $\St$ the composition of 1-arrows is associative and
strict group actions make perfectly good sense.  Additionally there is
a way of thinking of a groupoid as ``coordinates'' on a corresponding
stack.  Different choices of coordinates define Morita equivalent
groupoids.  And Morita equivalent groupoids define ``the same''
(isomorphic) stacks.

\section{Stacks}\label{sec:stacks}

In section~\ref{bi} we constructed a weak 2-category $\Bi$ whose
objects are Lie groupoids, 1-arrows (morphisms) are bibundles and
2-arrows (morphisms between morphisms) are equivariant maps between
bibundles.  The goal of this section is to describe a particularly
nice and concrete (?!) strictification of this weak 2-category.  That
is, we describe a strict 2-category $\St$ of stacks and a functor $B:
\Bi \to \St$ which is an embedding of weak 2-categories (there is no
established name in literature for this functor, so I made one up).
The 2-category $\St$ of stacks is a sub-2 category of the category of
categories $\Cat$.  Recall that the objects of $\Cat$ are categories,
the 1-arrows are functors and the 2-arrows are natural
transformations.

Here is a description of the 2-functor $B: \Bi \to \Cat$ (it will land
in $\St$ once we define/explain  what $\St$ is):\\

\noindent 1.  To a groupoid $G$
assign  the category $BG$,
whose objects are principal $G$-bundles and morphisms are
$G$-equivariant maps.\\

\noindent 2.  To a bibundle $P:G\to H$  assign a functor
\[
BP :BG \to BH
\]
as follows:  A principal $G$-bundle $Q$ on a manifold $M$ is a
bibundle from the groupoid $\{M\toto M\}$ to $G$.  Define
\[
BP (Q) = P \circ Q \quad \textrm{(a composition of bibundles)}.
\]
A $G$-equivariant map $\phi:Q_1 \to Q_2$ between two principal
$G$-bundles $Q_1 \to M_1$, $Q_2 \to M_2$ induces an $H$-equivariant
map $BP (\phi): P\circ Q_1 \to P\circ Q_2$ between the
corresponding principal $H$-bundles.    It is not hard to check that
$BP$ is actually a functor.\\

\noindent 3.  To a $G$-$H$ equivariant map $A: P\to P'$ assign a natural
transformation $BA :BP \Rightarrow BP'$ as follows.  Given a principal
$G$-bundle $Q$, the map $A: P\to P'$ induces a $G$-$H$ equivariant map
$\tilde{A}: Q\times _{G_0} P \to Q\times _{G_0} P'$ which descends an
$H$ equivariant diffeomorphism
\[
BA (Q): BP (Q) \equiv P\circ Q \equiv (Q\times _{G_0} P)/G \to (Q\times _{G_0}
P')/G \equiv BP' (Q).
\]

\begin{remark}\label{rm:slice-cat}
The notation $B\{M\toto M\}$ is quite cumbersome.  Instead we will use
the notation $\under{M}$.

It follows from Example~\ref{ex:man-as-groupoid} that the category
$\under{M}$ has the following simple description.  It objects are maps
$Y\stackrel{f}{\to} M$ of manifolds into $M$.  A morphism in
$\under{M}$ from $f:Y\to M$ to $f':Y' \to M$ is a map of manifolds
$h:Y\to Y'$ making the diagram $\vcenter{\xy (0,6)*+{Y}="1";
(0,-6)*+{Y'}="2"; (12,0)*+{M} ="3"; {\ar@{->}_{h} "1";"2"};
{\ar@{->}_{f'} "2";"3"};{\ar@{->}^{f} "1";"3"};
\endxy }$ commute.  The category $\under{M}$ is an example of a {\em slice}
(or {\em comma}) category.
\end{remark}

We now proceed to describe the image of the functor $B: \Bi \to \Cat$.
More precisely we will describe a slightly larger 2-category of {\em
geometric stacks} and the functor $B$ will turn out to be an
equivalence of weak 2-categories $B: \Bi \to $ geometric stacks.  More
precisely, we'll see that every geometric stack is isomorphic to a
stack of the form $BG$ for some Lie groupoid $G$.

We define geometric stacks in several step.  We first define
categories fibered in groupoids (CFGs) over the category of manifolds
$\Man$.  Next we define stacks.  These are CFG's with sheaf-like
properties.  Then we single out geometric  stacks.  These
are the stacks that have atlases. Finally  any
geometric stack is isomorphic (as a stack) to a stack of the form $BG$
for some groupoid $G$.

\subsection{Categories fibered in groupoids}
\begin{definition}\label{def:CFG}
A {\em category fibered in groupoids (CFG)}  over a
category $\sfC$ is a functor $\pi :\sfD\to \sfC$ such that
\begin{enumerate}
\item Given an arrow $f:C'\to C$ in $\sfC$ and an object
$\xi\in \sfD$ with $\pi(\xi)
= C$ there is an arrow $\tilde{f}:\xi' \to \xi$ in $\sfD$ with $\pi
(\tilde{f}) = f$ (we think of $\xi'$ as a {\em pullback} of $\xi$ along $f$).
\item  Given a diagram $\vcenter{ \xymatrix@=8pt@ur{
& \xi''\ar[d]^f\\ \xi' \ar[r]_{h} & \xi}}$ in $\sfD$ and a diagram
$\vcenter{ \xymatrix@=8pt@ur{ & \pi(\xi'')\ar[dl]_{g}\ar[d]^{\pi(f)}\\
\pi(\xi') \ar[r]_{\pi(h)} & \pi(\xi)}}$ in $\sfC$ there is a unique
arrow $\tilde{g}:\xi''\to \xi'$ in $\sfD$ making $\vcenter{
\xymatrix@=8pt@ur{ & \xi''\ar[d]^f\ar[dl]_{\tilde{g}}\\ \xi'
\ar[r]_{h} & \xi}}$ commute and satisfying $\pi (\tilde{g}) = g$.
That is, there is a unique way to fill in the first diagram so that
its image under $\pi$ is the second diagram.
\end{enumerate}
We will informally say that $\sfD$ is a category fibered in groupoids
over $\sfC$, with the functor $\pi$ understood.
\end{definition}

\begin{example}
Fix a Lie groupoid $G$. I claim that the functor $\pi:BG \to \Man$
that sends a principal $G$ bundle to its base and a $G$-equivariant
map between two principal $G$-bundles to the induced map between their
bases makes the category $BG$ into a category
fibered in groupoids over the category $\Man$ of manifolds.

Indeed condition (1) of Definition~\ref{def:CFG} is easy to check.
Given a map $f:N\to M$ between two smooth manifolds and a principal
$G$-bundle $\xi \to M$ we have the pullback bundle $f^* \xi \to N$
and a $G$-equivariant map $\tilde{f}:f^*\xi \to \xi$ inducing $f$ on
the bases of the bundles.

Note that if $\pi':\xi' \to N$ is a principal $G$-bundle and $h:
\xi' \to \xi$ is a $G$-equivariant map inducing $f:N\to M$ then
there is a canonical $G$-equivariant map $\eta: \xi'\to f^*\xi$ which
is given by $\eta (x) = (\pi' (x), h(x))$.  By
Corollary~\ref{cor:g-equiv-iso}, the map $\eta$ is a diffeomorphism.

To check condition (2) suppose that we have three principal $G$-bundles
$\xi'' \to M''$, $\xi' \to M'$, $\xi \to M$, two $G$-equivariant maps
$f:\xi'' \to \xi$, $h:\xi' \to \xi$ inducing $\bar{f}:M'' \to M$ and
$\bar{h}: M'\to M$ respectively and a map $g:M'' \to M$ so that
$\vcenter{ \xymatrix@=8pt@ur{ & M''\ar[dl]_{g}\ar[d]^{\bar{f}}\\ M'
\ar[r]_{\bar{h}} & M}}$ commutes. We want to construct a
$G$-equivariant map $\tilde{g}:\xi'' \to \xi'$ with $h\circ \tilde{g}
=f$.  By the preceding paragraph we may assume that $\xi'' =
\bar{f}^* \xi = M''\times _M \xi$ and $\xi' = \bar{h}^* \xi =
M'\times _M \xi$.  Define $\tilde{g}:M''\times _M \xi \to M'\times
_M \xi$ by $\tilde{g} (m, x) = (g(m), x)$.  Hence $h\circ \tilde{g}
=f$, and we have verified that $\pi:BG\to \Man$ is a CFG.
\end{example}

\begin{definition}[Fiber of CFG] Let $\pi:\sfD\to \sfC$ be a category fibered
in groupoids and $C\in \sfC_0$ an object.  The {\em fiber }
of $\sfD$ over $C$ is the category $\sfD (C)$ with
objects
\[
\sfD (C)_0 := \{\xi \in \sfD_0 \mid \pi (\xi) = C\}\]
 and arrows/morphisms
\[
\sfD (C)_1 :=\{ (f:\xi' \to \xi) \in \sfD _1 \mid
 \xi, \xi' \in \sfD (C) _0 \textrm{ and } \pi (f) = id_C\}.
\]
\end{definition}

\begin{example}
In the case of $\pi:BG\to \Man$ the fiber of $BG$ over a manifold $M$
is the category of principal $G$-bundles over $M$ and gauge
transformations ($G$-equivariant diffeomorphisms covering the identity
map on the base).
\end{example}
\begin{remark}
Let $\pi:\sfD\to \sfC$ be a CFG.  Suppose $Y\stackrel{f}{\to}X$ is an
arrow in $\sfC$, $\xi\in \sfD(X)_0$, $\xi_1, \xi_2\in \sfD(Y)_0$ and
$h_i :\xi_i \to \xi$ ($i=1,2$) are two arrows in $\sfD$ with $\pi( h_i
) = f$.  Then by
\ref{def:CFG}~(1) there exist unique arrows $k:\xi_1\to \xi_2$ and $\ell: \xi_2 \to \xi_1$ making the diagrams
\[
\xy
(0,6)*+{\xi_1}="1"; (0,-6)*+{\xi_2}="2";
(12,0)*+{\xi} ="3";
{\ar@{->}_{k} "1";"2"};
%{\ar@{->}^{\ell} "2";"1"};
{\ar@{->}_{h_2} "2";"3"};{\ar@{->}^{h_1} "1";"3"};
\endxy
\quad \textrm { and } \quad
\xy
(0,6)*+{\xi_1}="1"; (0,-6)*+{\xi_2}="2";
(12,0)*+{\xi} ="3";
%{\ar@{->}_{k} "1";"2"};
{\ar@{->}^{\ell} "2";"1"};
{\ar@{->}_{h_2} "2";"3"};{\ar@{->}^{h_1} "1";"3"};
\endxy
\]
commute, with $\pi (k) = \pi (\ell) =id_Y$.  Then, since $\pi (k\circ
\ell) = id_Y$ and
\[
\xy
(0,6)*+{\xi_1}="1"; (0,-6)*+{\xi_1}="2";
(12,0)*+{\xi} ="3";
{\ar@{->}_{\ell \circ k} "1";"2"};
{\ar@{->}_{h_1} "2";"3"};{\ar@{->}_{h_1} "1";"3"};
\endxy
\]
commutes, we must have $\ell\circ k = id_{\xi_1}$.  Similarly, $k\circ
\ell = id_{\xi_2}$.  We conclude: {\em any two pullbacks of $\xi$
along $Y\stackrel{f}{\to}X$ are isomorphic.}\\

\noindent
\framebox{\begin{minipage}[c]{\textwidth }
From now on, given a CFG $\pi:\sfD \to \sfC$ and $\xi\in \sfD(X)_0$  for each arrow  $Y\stackrel{f}{\to}X \in \sfC_1$  we {\bf choose} an
arrow $\tilde{f}$ in $\sfD$ with target $\xi$.  We denote the source
of $\tilde{f}$ by $f^* \xi$ and refer to it as the {\em pullback of $\xi$
by $f$}.  We always choose $id^*\xi = \xi$.
\end{minipage}
} \\[12pt]

Similarly we can define {\em pullbacks of arrows}: Suppose $(\xi_1
\stackrel{\gamma}{\to} \xi_2) \in \sfD(X)_1$ is an arrow in $\sfD$ and
$(Y\stackrel{f}{\to}X)$ is an arrow in $\sfC$.  We then have a diagram
in $\sfD$:
\begin{equation}\label{eq*}
\xy
(0,6)*+{f^*\xi_1}="1"; (0,-6)*+{f^*\xi_2}="2";
(12,6)*+{\xi_1} ="3";(12,-6)*+{\xi_2} ="4";
{\ar@{->}^{\tilde{f}_1} "1";"3"};
{\ar@{->}_{\tilde{f}_2} "2";"4"};{\ar@{->}_{\gamma} "3";"4"};
\endxy.
\end{equation}
By \ref{def:CFG}~(1) applied to $\vcenter{\xy
(0,6)*+{f^*\xi_1}="1"; (0,-6)*+{f^*\xi_2}="2";
(12,0)*+{\xi_2} ="3";
{\ar@{->}^{\gamma\circ \tilde{f}_1} "1";"3"};{\ar@{->} "2";"3"};
\endxy}$
we get the unique arrow $f^*\gamma: f^*\xi_1 \to f^*\xi_2$ making
(\ref{eq*}) commute.
\end{remark}

\begin{remark}
Similar arguments show that a fiber $\sfD (C)$ of a
category $\sfD$ fibered in groupoids over $\sfC$ is actually a
groupoid.  That is, all arrows in $\sfD(C)$ are invertible.
\end{remark}

\begin{definition}[Maps of CFGs] \label{def:mapCFG}
Let $\pi_{\sfD}:\sfD \to \sfC$ and $\pi_{\sfE}:\sfE \to \sfC$ be two
categories fibered in groupoids.  A {\em 1-morphism} (or a {\em 1-arrow})
  $F:\sfD \to \sfE$ of
CFGs is a functor that commutes with the projections: $\pi_{\sfE}
\circ F = \pi_\sfD$.

A 1-morphism  $F:\sfD \to \sfE$ of
CFGs is an {\em isomorphism} if it is an equivalence of categories.

Given two 1-morphisms $F, F':\sfD \to \sfE$ of CFGs, a {\em 2-morphism}
$\alpha: F\Rightarrow F'$ is a natural transformation from $F$ to
$F'$.
\end{definition}

Thus the collection of all categories fibered in groupoids over a
given category $\sfC$ is a strict 2-category.  Note also that
natural transformations between 1-arrows of CFGs are automatically
invertible since the fibers of CFGs are groupoids.  We note that for
any two CFGs $\sfD$ and $\sfE$ over $\sfC$, the collection of
1-arrows $\Hom (\sfD, \sfE)$ forms a category.  In fact, it is a
groupoid.

\subsection{Descent}

To make sense of the next definition, consider how a principal
$G$-bundle $P\to M$ ($G$ a Lie groupoid) can be reconstructed from its
restrictions to elements of an open cover $\{U_i\}$ of $M$ and the
gluing data.\footnote{The reader may think of $G$ as a Lie group to
avoid getting bogged down in irrelevant technicalities.} We have
restrictions $P_i = P|_{U_i}$ and isomorphisms $P_i|_{U_{ij}} \to
P_j|_{U_{ij}}$ over double intersections $U_{ij} := U_i \cap U_j$
satisfying the cocycle conditions.  Given a $G$-equivariant map
$\phi:P'\to P$ of two principal $G$-bundles covering the identity map
on the base, we have a collection of $G$-equivariant maps $\phi_i:P'_i
\to P_i$ which agree on double intersections: $\phi_i|_{P_i|_{U_{ij}}}
= \phi_j|_{P_j|_{U_{ij}}}$.

Conversely, given a collection of principal $G$-bundles $\{ P_i\to
U_i\}$ and isomorphisms $\theta_{ij}:P_i|_{U_{ij}} \to P_j|_{U_{ij}}$
satisfying the cocycle conditions, there is a principal $G$-bundle $P$
over $M$ with $P|_{U_i}$ isomorphic to $P_i$ for all $i$.  Similarly,
given two collections $( \{P'_i\to U_i\}, \{\theta'_{ij}:P_i|_{U_{ij}}
\to P_j|_{U_{ij}}\})$, $( \{P_i\to U_i\}, \{\theta_{ij}:P_i|_{U_{ij}}
\to P_j|_{U_{ij}}\})$ and a collection of principal $G$-bundle maps
$\{\phi_i:P'_i \to P_i\}$ compatible with $\{\theta'_{ij}\}$ and
$\{\theta'_{ij}\}$, there is a $G$-equivariant map $\phi: P' \to P$
which restricts to $\phi_i$ over $U_i$.

A succinct way of describing the above local-to-global correspondence
is through the language of equivalences of categories.    We have the
category $BG(M)$ of principal
$G$-bundles over $M$ and $G$-equivariant maps covering $id_M$.  We may
think of it as the category $\Bi(\{M\toto M\}, G)$ of bibundles from
$\{M\toto M\}$ to $G$.  Given a cover $\cU = \bigsqcup U_i \to M$, we
have the cover groupoid $\cU\times _M \cU \toto \cU$.  A collection $(
\{P_i\to U_i\}, \{\theta_{ij}:P_i|_{U_{ij}}
\to P_j|_{U_{ij}}\})$ of principal $G$-bundles is nothing but a bibundle
from the cover groupoid to $G$.  Similarly, a map between two such
collections is an equivariant map between two bibundles.  And the
restriction map $P\mapsto \{P|_{U_i}\}$ induces a map between the
two categories:
\[
\Psi:
\Bi(\{M\toto M\}, G) \to \Bi( \{\cU\times _M \cU \toto \cU\}, G).
\]
Formally, on objects,
\[
\Psi (Q) = Q\circ U,
\]
where $U: \{\cU\times _M \cU \toto \cU\} \to \{M\toto M\}$ is the
bibundle with the total space $\cU$, left anchor the identity map and
the right anchor the ``embedding'' $\cU \to M$.  Since a
$G$-equivariant map $Q\to Q'$ induces a $G$-equivariant map $Q\circ
U\to Q'\circ U$, $\Psi$ is a functor.  Moreover, since $U$ is weakly
invertible, $\Psi $ is an equivalence of categories.
One says that the principal $G$-bundles on the cover $\cU$ satisfying the
compatibility conditions {\em descend} to the principal $G$-bundles on $M$.

More generally, given a CFG $\pi: \sfD\to \Man$ and a cover $\cU\to
M$, one defines the {\em descent category } $\sfD(\cU\to M)$.  To do
it properly, we need to correct one inaccuracy in the discussion
above.  We have taken advantage of the fact that one can restrict
principal bundles to open sets.  Furthermore if $\{U_i\}$ is a cover
of a manifold $M$ and $P\to M$ a principal $G$-bundle, then
$(P|_{U_i})|_{U_{ij}} = P|_{U_{ij}} = (P|_{U_j})|_{U_{ij}}$ (here,
again, $U_{ij} = U_i\cap U_j$).  But if we want to think of $BG\to
\Man $ abstractly, as a CFG, then restrictions should be replaced by
pullbacks.

Now if if $M''\stackrel{g}{\to} M' \stackrel{f}{\to }M$ are maps of
manifold and $\xi$ is an object of $\sfD$ over $M$, then we don't
expect $(f\circ g)^*\xi$ to equal $g^* (f^* \xi)$; we only expect them
to be canonically isomorphic.  And indeed if $\sfD = BG$ so that $\xi$
is a principal $G$-bundle, then the pullback $f^*(g^*P)$ is {\em not}
the same as $(f\circ g)^* P$ even as a {\em set!}  To talk about
descent in general we need to replace restrictions by pull-backs:
instead of $P|_{U_i}$ we should think $\iota_i^*P$ where $\iota _i :U_i
\to M$ denotes the canonical inclusion.   We will then discover that
$\iota_{ij}^* \iota_i^*P $ is isomorphic but not equal to
$\iota_{ji}^* \iota_j^* P$ ($\iota_{ij}$ and $\iota_{ji}$ denote the
inclusions of the double intersection $U_{ij}$ into $U_i$ and $U_j$
respectively), so the bookkeeping gets a bit more complicated.  Let us
now properly organize all this bookkeeping. We closely follow Vistoli
\cite{vistoli}.

Given an open covering $\{U_i \hookrightarrow M\}$ of a manifold $M$ we
think of the double intersections $U_{ij} = U_i\cap U_j$ as fiber
products $U_i\times _M U_j$ and triple intersections $U_{ijk} $ as
fiber products $U_i \times _M \times U_j \times _M U_k$.  Let $\pr_1
\colon U_i \times_M U_j \to U_i$ and $\pr_2 \colon U_i \times_M U_j
\to U_j$ the first and second projection respectively.  Similarly for any 
three indices $i_1, i_2, i_3$ we have projection $p_a: U_{i_1} \times
_M \times U_{i_2} \times _M U_{i_3} \to U_{i_a}$, $a=1,2,3$.  We also
have a commuting cube:
%\newpage

\begin{equation}\label{eq:basic-cube}
   \xymatrix{
   &U_{ijk}\ar[rr]^{\pr_{23}}\ar'[d]^{\pr_{13}}[dd]
   \ar[ld]_{\pr_{12}}
   &&U_{jk}\ar[ld]\ar[dd]\\
   U_{ij}\ar[rr]\ar[dd]
   && U_j \ar[dd]\\
   &U_{ik} \ar[ld]\ar'[r][rr]
   &&U_k\ar[ld]\\
   U_i \ar[rr]&& M
   }
\end{equation}
where $\pr_{12}$, $\pr_{13}$ and $\pr_{23}$ denote the appropriate
projections.

\begin{definition}[Descent category]
Let $\pi:\sfD\to \Man$ be a category fibered in groupoids, $M$ a
manifold and $\{U_i\}$ an open cover of $M$.  An \emph{object with
descent data}$(\{\xi_i\}, \{\phi_{ij}\})$ on $M$, is a collection of
objects $\xi_i \in \sfD(U_i)$, together with isomorphisms $\phi_{ij}
\colon \pr_2^* \xi_j \simeq \pr_1^* \xi_i$ in $\sfD(U_{ij}) =
\sfD(U_i \times_M U_j)$, such that the following cocycle condition is
satisfied: for any
triple of indices $i$, $j$ and $k$, we have the equality \[
\mathrm{pr}_{13}^* \phi_{ik} = \mathrm{pr}_{12}^* \phi_{ij} \circ
\mathrm{pr}_{23}^* \phi_{jk} \colon \pr_3^*\xi_k \to\pr_1^*\xi_i \]
where the $\pr_{ab}$ and $\pr_a$ are projections discussed above.  The
isomorphisms $\phi_{ij}$ are called \emph{transition isomorphisms} of
the object with descent data.

An arrow between objects with descent data
   \[
   \{\alpha_i\} \colon (\{\xi_i\}, \{\phi_{ij}\}) \to
   (\{\eta_i\}, \{\psi_{ij}\})
   \]
is a collection of arrows $\alpha_i\colon \xi_i \to \eta_i$ in $\sfD(U_i)$, 
with the property that for each pair of indices $i$, $j$, the diagram
   \[
   \xymatrix@C+15pt{
   {}\pr_2^* \xi_j \ar[r]^{\pr_2^* \alpha_j} \ar[d]^{\phi_{ij}}
   & {}\pr_2^* \eta_j\ar[d]^{\psi_{ij}}\\
   {}\pr_1^* \xi_i \ar[r]^{\pr_1^* \alpha_i}&
   {}\pr_1^* \eta_i
   }
   \]
commutes.

There is an obvious way of composing morphisms, which makes objects
with descent data the objects of a category, {\em the descent category
of } $\{U_i \to M\}$.  We denote it by  $\sfD(\{U_i \to M\})$,
\end{definition}

\begin{remark}
As before let $\pi:\sfD\to \Man$ be a category fibered in groupoids, $M$ a manifold and $\{U_i\}$ an open cover of $M$.  We have a functor
\[
\sfD(M) \to \sfD(\{U_i \to M\})
\]
 given by pullbacks.
\end{remark}

We are now in position to define stacks over manifolds.
\begin{definition} (Stack)
A category fibered in groupoids $\pi: \sfD\to \Man$ is a {\em stack}
if for any manifold $M$ and any open cover $\{U_i \to M\}$ the pullback functor
\[
\sfD(M) \to \sfD(\{U_i \to M\})
\]
 is an equivalence of categories.
\end{definition}

\begin{example}
The CFG $BG\to \Man$ is a stack for any Lie groupoid $G$.
\end{example}

\begin{example}
Let $\Gamma$ be a Lie group.  The category $dB\Gamma$ with objects
principal $\Gamma$-bundles {\em with connections} and morphisms
connection preserving equivariant maps is a stack.
\end{example}

\begin{definition}(Maps of stacks)
Let $ \pi_\sfC: \sfC \to \Man$, $\pi_\sfD: \sfD: \sfD \to \Man$ be two
stacks. A functor $f: \sfC\to \sfD$ is a {\em map of stacks} (more
precisely a 1-arrow in the 2-category $\St$ of stacks) if it is a map of
CFGs (cf. Definition~\ref{def:mapCFG})---  $f$ commutes with the projections to $\Man$:
\[
\pi_\sfD \circ f = \pi_\sfC.
\]
\end{definition}

\begin{lemma}\label{lem:map-manifold-BH}
Let $M$ be a manifold, $H$ a groupoid. Then any map of stacks
$F:\under{M} \to BH$ is naturally isomorphic to the functor $BP$ induced by a
principal $H$-bundle $P$ over $M$.
\end{lemma}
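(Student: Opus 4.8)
The plan is to exploit the Yoneda-type philosophy: an object of the stack $BH$ over $M$ is exactly a principal $H$-bundle over $M$, and a map of stacks $F : \under{M} \to BH$ is determined up to isomorphism by where it sends the universal object of $\under{M}$, namely the identity map $\mathrm{id}_M : M \to M$, which is the terminal object of the slice category $\under{M}$. So the first step is to set $P := F(\mathrm{id}_M)$. Since $F$ is a map of CFGs, $\pi_{BH}(F(\mathrm{id}_M)) = \pi_{\under M}(\mathrm{id}_M) = M$, so $P$ is a principal $H$-bundle over $M$. This $P$ will be the bundle whose associated functor $BP$ is naturally isomorphic to $F$.

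Next I would construct the natural isomorphism $\eta : BP \Rightarrow F$. For a manifold $N$ and an object of $\under{M}$ over $N$, i.e.\ a map $g : N \to M$, the CFG structure on $\under M$ gives a cartesian arrow $g \to \mathrm{id}_M$ lying over $g$ (concretely the square with $g$ and $\mathrm{id}_M$ as the two legs). Applying $F$ and using that $F$ commutes with projections, we get an arrow $F(g) \to F(\mathrm{id}_M) = P$ in $BH$ lying over $g : N \to M$; by the universal property of the chosen pullback $g^*P$ in $BH$ (and Corollary~\ref{cor:g-equiv-iso}, which guarantees that a $H$-equivariant map over the identity is an isomorphism), this produces a canonical isomorphism $F(g) \xrightarrow{\ \simeq\ } g^*P = BP(g)$ in the fiber $BH(N)$. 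Define $\eta_g$ to be the inverse of this isomorphism. One then checks naturality in $g$: given an arrow $h : (g':N'\to M) \to (g:N\to M)$ in $\under M$ (so $g \circ h = g'$ up to the evident identification), the relevant square commutes because both $F$ and the pullback assignment $g \mapsto g^*P$ respect composition of cartesian arrows up to the canonical coherence isomorphisms, and because the cartesian arrows in $\under M$ over $g$ and $g'$ fit into a commuting triangle with $h$. This is the point where one must be a little careful with the bookkeeping of chosen pullbacks (recall the convention that $\mathrm{id}^*\xi = \xi$ but $(fg)^*\xi$ is only canonically isomorphic to $g^*f^*\xi$), but it is precisely the same bookkeeping already discussed before the definition of the descent category.

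Finally I would verify that $\eta$ is indeed a natural \emph{iso}morphism: each component $\eta_g$ is an isomorphism in $BH(N)$ by construction (it is the inverse of an isomorphism produced via Corollary~\ref{cor:g-equiv-iso}), and a natural transformation all of whose components are invertible is a natural isomorphism. This gives $BP \cong F$ as maps of stacks, as required.

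\textbf{Expected main obstacle.} The only genuinely delicate point is the coherence check: showing that the family $\{\eta_g\}$ is natural, i.e.\ that it does not depend on the auxiliary choices of cartesian lifts, and that it interacts correctly with composition of morphisms in $\under M$. Unwinding this amounts to chasing a diagram built from two cartesian squares in $\under M$ and their $F$-images in $BH$, together with the canonical isomorphism $g'^*P \cong h^*g^*P$; the universal property in Definition~\ref{def:CFG}(2) forces all the resulting arrows to agree. Everything else — identifying $P = F(\mathrm{id}_M)$, recognizing $\mathrm{id}_M$ as terminal in $\under M$, and invoking Corollary~\ref{cor:g-equiv-iso} to upgrade equivariant maps to isomorphisms — is essentially formal.
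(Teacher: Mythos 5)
Your proposal is correct and follows essentially the same route as the paper: set $P = F(\mathrm{id}_M)$, observe that each object $g:N\to M$ of $\under{M}$ is itself an arrow to $\mathrm{id}_M$, apply $F$ to get an arrow $F(g)\to P$ over $g$, and use the cartesian/pullback property of $BH$ (via Corollary~\ref{cor:g-equiv-iso}) to identify $F(g)$ with $g^*P$, assembling these into a natural isomorphism. The only difference is that you spell out the naturality and coherence bookkeeping that the paper leaves implicit.
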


\begin{proof}
As we have seen in Remark~\ref{rm:slice-cat}, the objects of the CFG
$\under{M}$ are maps $Y\stackrel{f}{\to} M$.  An arrow in $\under{M}$  from
$Y\stackrel{f}{\to} M$ to $Y'\stackrel{f'}{\to} M$ is a
 commuting triangle $\vcenter{ \xymatrix@=8pt@ur{ &
Y\ar[dl]_{h}\ar[d]^f\\ Y' \ar[r]_{f'} & M}} $.  The functor $F$
assigns to each object $Y\stackrel{f}{\to} M$ of $\under{M}$ a
principal $H$-bundle $F(Y\stackrel{f}{\to} M)$ over $M$. Let $P =
F(M\stackrel{id}{\to} M)$.  Note that any map $f:Y\to M$ is {\em also
an arrow in }$\under{M}$: it maps $Y\stackrel{f}{\to}M$ to
$M\stackrel{id}{\to} M$, since $\vcenter{ \xymatrix@=8pt@ur{ &
Y\ar[dl]_{f}\ar[d]^f\\ M \ar[r]_{id} & M}} $ commutes.  Hence we get a
map of principal $H$-bundles
\[
F\left( \vcenter{ \xymatrix@=8pt@ur{  &  Y\ar[dl]_{f}\ar[d]^f\\
M  \ar[r]_{id} & M}}\right) : F(Y\stackrel{f}{\to} M) \to P
\]
projecting down to the map $f:Y\to M$ in $\Man$.  But $BH\to \Man$ is
a CFG and $f^*P\to P$ is another arrow in $BH$ projecting down to
$f:Y\to M$.  Consequently the principal $H$-bundle
$F(Y\stackrel{f}{\to} M) \to Y$ is isomorphic to the bundle $f^*P \to
Y$.  Denote this isomorphism by $\alpha (f)$.  Varying $f\in
(\under{M})_0$ we get a map
\[
\alpha : (\under{M})_0 \to (BH)_1;
\]
it is a natural isomorphism of functors $\alpha : F \Rightarrow BP$.
\end{proof}

\begin{corollary}\label{cor:maps-of-manifolds-as-stacks}
Let $M, M'$ be two manifolds.  For any map $F:\under{M}\to \under{M'}$
of CFGs there is a unique map of manifolds $f:M\to M'$ defining $f$.
That is the functor $\Man \to $ CFG's over $\Man$, $M \mapsto
\under{M}$ is an embedding of categories.
\end{corollary}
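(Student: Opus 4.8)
The plan is to deduce Corollary~\ref{cor:maps-of-manifolds-as-stacks} directly from Lemma~\ref{lem:map-manifold-BH} together with Example~\ref{ex:man-as-groupoid}. First I would apply Lemma~\ref{lem:map-manifold-BH} with $H = \{M' \toto M'\}$, so $BH = \under{M'}$: a map of stacks $F:\under{M}\to \under{M'}$ is naturally isomorphic to $BP$ for some principal $\{M'\toto M'\}$-bundle $P$ over $M$. By the first sentence of Example~\ref{ex:man-as-groupoid}, a bibundle $P:\{M\toto M\}\to \{M'\toto M'\}$ is the graph $\graff(f)$ of a unique map $f:M\to M'$; equivalently, a principal $\{M'\toto M'\}$-bundle over $M$ is exactly a map $a_R : P \to M'$ where $a_L : P\to M$ is a diffeomorphism, so it is the graph of $f := a_R\circ a_L^{-1}$. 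Thus $P = \langle f\rangle$ and $F$ is naturally isomorphic to $B\langle f\rangle =: \under{f}$.

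For \emph{uniqueness} of $f$, suppose $\under{f}$ and $\under{g}$ are naturally isomorphic for two maps $f, g: M\to M'$. A natural isomorphism $\under{f}\Rightarrow \under{g}$ evaluated at the object $(M\stackrel{id}{\to}M)$ of $\under{M}$ is an isomorphism $\graff(f)\to\graff(g)$ of principal $\{M'\toto M'\}$-bundles over $M$; by the second half of Example~\ref{ex:man-as-groupoid} such an equivariant map corresponds to a map $h:M\to M$ making the triangle with $f$ and $g$ commute, i.e.\ $f = g\circ h$, while covering the identity on the base forces $h = id_M$, hence $f = g$. (Alternatively, one recovers $f$ as a morphism in $\Man$ intrinsically: $\under{f}$ sends the terminal-type object $id_M$ to the object $\graff(f)$ of $\under{M'}$, whose anchor $a_R$ is $f$.) Combined with the existence statement, this shows the assignment $M\mapsto \under M$ is fully faithful; it is injective on objects because $\under{M}$ determines $M$ as the base of its terminal object $id_M$, so $M\mapsto \under M$ is an embedding of categories.

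Finally I would note the one point that needs a word of care: strictly speaking Lemma~\ref{lem:map-manifold-BH} produces a natural isomorphism rather than an equality $F = \under{f}$, so what is really proved is that $M\mapsto\under{M}$ is an equivalence onto its image and induces a bijection on isomorphism classes of 1-arrows; but since distinct maps $f\neq g$ give non-isomorphic functors $\under f\not\cong\under g$ (the uniqueness argument above), the induced map $\Man(M,M')\to \Hom(\under M,\under{M'})$ is injective, and Lemma~\ref{lem:map-manifold-BH} gives that every $F$ is isomorphic to some $\under f$, which is exactly the content of ``embedding of categories'' in the weak sense appropriate to a 2-category. The only mild obstacle is bookkeeping: making sure the equivariant-map-to-graph dictionary of Example~\ref{ex:man-as-groupoid} is applied on the nose, so that the base map extracted is literally $f$ and the ``covering the identity'' condition is used to kill the ambiguity $h$. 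No new ideas are required beyond Lemma~\ref{lem:map-manifold-BH} and Example~\ref{ex:man-as-groupoid}.
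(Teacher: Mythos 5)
Your argument is correct in substance and leans on the same two ingredients the paper uses (Lemma~\ref{lem:map-manifold-BH} specialized to $H=\{M'\toto M'\}$, plus the graph dictionary of Example~\ref{ex:man-as-groupoid}), but you stop one step short of the statement as written and then compensate by reinterpreting ``embedding'' in a weak 2-categorical sense. The caveat you flag at the end --- that Lemma~\ref{lem:map-manifold-BH} only produces a natural isomorphism $F\cong \under{f}$ rather than an equality --- evaporates once you notice the special feature of the target: the fibers of $\under{M'}$ are \emph{discrete} groupoids (the only arrows in $\under{M'}$ projecting to $id_Y$ are identities, since an arrow in $\under{M'}$ over $id_Y$ is a commuting triangle whose underlying manifold map is $id_Y$). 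A natural transformation between two maps of CFGs $\under{M}\to\under{M'}$ takes values in these fibers, so any natural isomorphism between such functors is the identity and the functors are literally equal. This single observation is the entirety of the paper's proof: it upgrades ``$F$ is isomorphic to $\under{f}$'' to ``$F=\under{f}$'' on the nose, gives uniqueness of $f$ at the same stroke (if $\under{f}\cong\under{g}$ then $\under{f}=\under{g}$, and evaluating at $id_M$ recovers $f=g$), and makes the strict 1-categorical reading of ``embedding'' correct without any weakening. So: same route, but you should replace your closing ``weak sense'' paragraph with the discreteness-of-fibers remark, which both simplifies your uniqueness argument and delivers the corollary exactly as stated.
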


\begin{proof}
Any two maps of CFGs from $\under{M}$ to $\under{M'}$ are equal since
the only arrows in the fibers of $\under{M'}$ are the identity arrows.
\end{proof}
\begin{remark}
Note a loss: if we think of smooth manifolds as stacks, we lose the
way to talk about maps between manifolds that are {\em not} smooth.
\end{remark}

\begin{remark}
With a bit of work Lemma~\ref{lem:map-manifold-BH} above can be
improved as follows:\\

Let $G$ and $H$ be two Lie groupoids.  Then any map of stacks $F:BG
\to BH$ is isomorphic to $BP$ for some principal bibundle $P:G\to
H$.\\

Indeed, let $P = F(G_1 \to G_0)$.  It is an object of $BH (G_0)$, that
is, a principal $H$-bundle over $G_0$.  Since $G_1 \to G_0$ also has a
left $G$-action and $F$ is a functor, $P$ also has a left $G$-action.
A bit more work shows that $BP $ is isomorphic to $F$.
\end{remark}

\subsection{2-Yoneda}
Lemma~\ref{lem:map-manifold-BH} generalizes to arbitrary categories
fibered in groupoids.  The result is often refered to as 2-Yoneda
lemma.

For any category $\sfC$ and any object $C\in \sfC_0$ there
exists a CFG $\underline{C}$ over $\sfC$ defined as follows. The
objects of $\underline{C}$ are maps $C' \stackrel{f}{\to } C\in
\sfC_1$.  A morphism from $C' \stackrel{f}{\to } C$ to $C''
\stackrel{g}{\to } C$ is a commuting triangle $\vcenter{
 \xymatrix@=8pt@ur{ & C'\ar[dl]_{h}\ar[d]^f\\ C'' \ar[r]_{g} & C}} $.
 There is an evident composition of such triangles (stick them
 together along the common side) making $\underline{C}$ into a
 category.  There is also a functor $\pi_C : \under{C}\to \sfC$:
 $\pi_C (C' \stackrel{f}{\to } C) = C'$ and $\pi_C (\vcenter{
 \xymatrix@=8pt@ur{ & C'\ar[dl]_{h}\ar[d]^f\\ C'' \ar[r]_{g} & C}} ) =
 (h:C' \to C'')$.

\begin{lemma} [2-Yoneda] Let  $\sfD\to \sfC$ be  a category fibered in
groupoids.  For any object $X\in \sfC$ here is an equivalence of
categories
\begin{align*}
\Theta: \Hom _\CFG (\underline{X}, \sfD)& \to  \sfD (X),\\
\quad (F:\underline{X}
\to \sfD )& \mapsto  F(X\stackrel{id}{\to} X),\\ \quad (\alpha:
F\Rightarrow G) &\mapsto  (\alpha (X\stackrel{id}{\to} X) : F
(X\stackrel{id}{\to} X) \to G (X\stackrel{id}{\to} X)).
\end{align*}
where $\Hom _\CFG (\underline{X}, \sfD)$ denotes the category of
maps of  CFGs and natural transformations between them.
\end{lemma}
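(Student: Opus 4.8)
The plan is to construct an explicit weak inverse to $\Theta$ and check that the two composites are naturally isomorphic to the respective identities; this is the standard Yoneda-style argument adapted to the 2-categorical setting of CFGs. Recall that the paper has fixed, once and for all, a choice of pullback $f^*\xi$ for every arrow $f$ in $\sfC$ and every object $\xi$ of $\sfD$ over the target of $f$, together with the chosen arrow $\widetilde f \colon f^*\xi \to \xi$ lying over $f$, normalized so that $\mathrm{id}^*\xi = \xi$. The inverse functor $\Psi \colon \sfD(X) \to \Hom_\CFG(\underline{X}, \sfD)$ will send an object $\xi \in \sfD(X)_0$ to the functor $\Psi(\xi)$ defined on objects by $\Psi(\xi)(C' \xrightarrow{f} X) := f^*\xi$, and on an arrow of $\underline{X}$, i.e.\ a commuting triangle with $f = g\circ h$ as in the statement, to the unique arrow $h^*\xi \to f^*\xi$ over $h$ produced by Definition~\ref{def:CFG}(2) applied to $\widetilde f \colon f^*\xi \to \xi$ and $\widetilde g \colon g^*\xi \to \xi$ (this is exactly the ``pullback of arrows'' construction spelled out in the Remark following the pullback discussion). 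Functoriality of $\Psi(\xi)$ — that it respects composition of triangles and sends identity triangles to identities — follows from the uniqueness clause in Definition~\ref{def:CFG}(2), since both sides of each required identity are arrows over the same base arrow fitting into the same commuting triangle, hence must coincide.

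Next I would extend $\Psi$ to morphisms. Given an arrow $\beta \colon \xi \to \eta$ in $\sfD(X)_1$, I must produce a natural transformation $\Psi(\beta)\colon \Psi(\xi)\Rightarrow \Psi(\eta)$; its component at an object $C'\xrightarrow{f} X$ is the arrow $f^*\beta\colon f^*\xi \to f^*\eta$ over $\mathrm{id}_{C'}$ obtained from Definition~\ref{def:CFG}(1) applied to $\beta\circ \widetilde f \colon f^*\xi \to \eta$ and $\widetilde f_\eta\colon f^*\eta\to\eta$ (again the ``pullback of arrows'' construction, now with the parametrizing arrow being an identity). Naturality of $\Psi(\beta)$ with respect to a triangle in $\underline{X}$ is once more a consequence of the uniqueness in Definition~\ref{def:CFG}(2): the two ways around the naturality square are arrows over the same base arrow $h$ filling the same commuting diagram into $\eta$, hence equal. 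That $\Psi$ is itself a functor (preserves composition and identities of $\sfD(X)$) is the same uniqueness argument one more time.

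Finally I would verify the two natural isomorphisms. For $\Theta\circ\Psi$: we have $\Theta(\Psi(\xi)) = \Psi(\xi)(X\xrightarrow{\mathrm{id}}X) = \mathrm{id}^*\xi = \xi$ on the nose because of our normalization $\mathrm{id}^*\xi=\xi$, and one checks on morphisms it is literally the identity, so $\Theta\circ\Psi = \mathrm{id}_{\sfD(X)}$ strictly. For $\Psi\circ\Theta$: given $F\colon \underline{X}\to\sfD$, set $\xi := F(X\xrightarrow{\mathrm{id}}X) = \Theta(F)$; I need a natural isomorphism $\eta_F\colon \Psi(\xi)\Rightarrow F$. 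Its component at $C'\xrightarrow{f}X$ must be an isomorphism $f^*\xi \to F(C'\xrightarrow{f}X)$. To build it, observe that $f$ itself, viewed as an arrow in $\underline X$ from $(C'\xrightarrow{f}X)$ to $(X\xrightarrow{\mathrm{id}}X)$, maps under $F$ to an arrow $F(C'\xrightarrow{f}X)\to \xi$ in $\sfD$ lying over $f$; since $\widetilde f\colon f^*\xi\to\xi$ is another arrow over $f$ with the same target, the argument in the Remark preceding the boxed text (any two pullbacks of $\xi$ along $f$ are canonically isomorphic) yields a canonical isomorphism $f^*\xi \xrightarrow{\simeq} F(C'\xrightarrow{f}X)$; declare this to be $\eta_F$ at $f$. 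Naturality of $\eta_F$ in $f$ — i.e.\ compatibility across triangles of $\underline X$ — reduces, via $F$'s functoriality and Definition~\ref{def:CFG}(2)'s uniqueness, to an identity of arrows lying over a common base arrow. I expect the main obstacle to be precisely this last naturality check: keeping straight which arrows lie over which base arrows, and invoking the uniqueness clause in the right form so that the diagrams genuinely commute rather than merely commute up to yet another isomorphism; everything else is bookkeeping forced by the uniqueness in Definition~\ref{def:CFG}(2). One should also remark that $\Hom_\CFG(\underline X,\sfD)$ and $\sfD(X)$ are groupoids (the latter by the Remark stating fibers of CFGs are groupoids, the former similarly), so ``equivalence of categories'' here just means the functor is fully faithful and essentially surjective, both of which fall out of the above.
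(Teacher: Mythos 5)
Your proposal is correct and follows essentially the same route as the paper: your $\Psi(\xi)$ is exactly the paper's functor $F_\xi$ built from the chosen pullbacks, your $\Psi(\beta)$ is the paper's $\alpha_\gamma$, and your natural isomorphism $\eta_F$ is the paper's construction of the unique natural transformation between two functors agreeing at $X\stackrel{id}{\to}X$ --- the only difference is packaging (you exhibit a quasi-inverse where the paper verifies full faithfulness and essential surjectivity directly). One small notational slip: the value of $\Psi(\xi)$ on a triangle $f=g\circ h$ should be the unique arrow $f^*\xi\to g^*\xi$ covering $h$, not ``$h^*\xi\to f^*\xi$''.
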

\begin{proof}
Suppose $F,G: \underline{X} \to \sfD$ are two functors with $F(id_X) =
G(id_X) = \xi \in \sfD_0$.  We argue that for any $Y\in \sfC$ and any
$Y\stackrel{f}{\to} X\in \underline{X}(Y)_0$ there is a unique
$\alpha(f)\in C(Y)_1$ with $G(f)\stackrel{\alpha(f)}{\to} F(f)$.  Indeed, the diagram ${\vcenter{ \xy
(0,6)*+{Y}="1"; (0,-6)*+{X}="2";
(12,0)*+{X} ="3";
{\ar@{->}_{f} "1";"2"};
{\ar@{->}^{f} "1";"3"};
{\ar@{->}_{id} "2";"3"};
\endxy
}} $ in $\sfC$ defines an arrow in $\under{X}$ from
$(Y\stackrel{f}{\to}X)\in \under{X}(Y)_0$ to
$(X\stackrel{id}{\to}X)\in \under{X}(X)_0$.  Since $\pi_X
\left(\vcenter{ \xy (0,6)*+{Y}="1"; (0,-6)*+{X}="2"; (12,0)*+{X} ="3";
{\ar@{->}_{f} "1";"2"}; {\ar@{->}^{f} "1";"3"}; {\ar@{->}_{id}
"2";"3"};
\endxy}\right)
= (Y\stackrel{f}{\to} X) \in \sfC_1$ and
since $F$ and $G$ are maps of CFGs,
\[
\pi_\sfD (F\left(\vcenter{ \xy
(0,6)*+{Y}="1"; (0,-6)*+{X}="2";
(12,0)*+{X} ="3";
{\ar@{->}_{f} "1";"2"};
{\ar@{->}^{f} "1";"3"};
{\ar@{->}_{id} "2";"3"};
\endxy}\right))  = \pi_\sfD (G \left(\vcenter{ \xy
(0,6)*+{Y}="1"; (0,-6)*+{X}="2";
(12,0)*+{X} ="3";
{\ar@{->}_{f} "1";"2"};
{\ar@{->}^{f} "1";"3"};
{\ar@{->}_{id} "2";"3"};
\endxy}\right))  = Y\stackrel{f}{\to} X
 \]
as well.
Hence we have a diagram
\[
\xy
(0,8)*+{G(f)}="1"; (0,-8)*+{F(f)}="2";
(14,0)*+{\quad\quad \xi = G(id_X) = F(id_X)} ="3";
{\ar@{->}^{f} "1";"3"};
{\ar@{->}_{id} "2";"3"};
\endxy
\]
in $\sfD$. The functor $\pi_\sfD:\sfD \to \sfC$ takes the diagram
above to the commuting diagram
\[
 \xy
(0,6)*+{Y}="1"; (0,-6)*+{Y}="2";
(12,0)*+{X} ="3";
{\ar@{-->}_{id_Y} "1";"2"};
{\ar@{->}^{G(\triangleright)} "1";"3"};
{\ar@{->}_{F(\triangleright)} "2";"3"};
\endxy ,
\]
where
\[
\triangleright := \xy
(0,6)*+{Y}="1"; (0,-6)*+{X}="2"; (12,0)*+{X} ="3";
{\ar@{->}^f "1";"3"};{\ar@{->}_f "1";"2"}; {\ar@{->}^{id} "2";"3"};\endxy.
\]
Therefore, by the axioms of CFG, there is a unique arrow $\alpha(f)\in 
\sfD(Y)_1$ with $\pi_\sfD (\alpha(f)) = id_Y$ making the diagram
\[
\xy
(0,8)*+{G(f)}="1"; (0,-8)*+{F(f)}="2";
(14,0)*+{\xi} ="3";
{\ar@{->}^{G(\triangleright)} "1";"3"};
{\ar@{->}_{F(\triangleright)} "2";"3"};
{\ar@{->}_{\alpha(f)} "1";"2"};
\endxy
\]
commute.  The map $\alpha: \under{X}_0 \to \sfD_1$ is a natural transformation from $G$ to $F$.

We now argue that $\Theta$ is essentially surjective and fully
faithful.  Let $\xi \in \sfD(X)_0$ be an object.  Recall that for any
arrow $(Y\stackrel{f}{\to} X) \in \sfC_1$ we have chosen a pullback
$f^*\xi \in \sfD (Y)_0$.  Define a functor $F_\xi: \under{X}
\to \sfD$ by
\[
F_\xi (Y\stackrel{f}{\to} X) = f^*\xi,
\]

\[
F_\xi \left(\xy
(0,6)*+{Y}="1"; (0,-6)*+{Y'}="2";
(12,0)*+{X} ="3";
{\ar@{->}_{h} "1";"2"};
{\ar@{->}^{f} "1";"3"};
{\ar@{->}_{g} "2";"3"};
\endxy
\right) = \textrm{ the unique arrow in $\sfD$  from $f^*\xi$ to $g^*\xi$
covering $Y'\stackrel{h}{\to} Y$.}
\]
Note that $F_\xi (id_X) = id_X^*\xi = \xi$, so by the discussion above
there is a natural transformation $\alpha: F\Rightarrow F_\xi$.  Hence
$\Theta$ is essentially surjective.

It remains to prove that $\Theta$ is fully faithful.  Suppose
$(\gamma: \xi'\to \xi)\in \sfD(X)_1$ is an arrow.  We want to find a
natural transformation $\alpha_\gamma: F_{\xi'}\Rightarrow F_\xi$ with
$\Theta (\alpha_\gamma) = \gamma$ and prove that such a natural
transformation is unique.

Given $(Y\stackrel{f}{\to}X)\in \under{X}_0$ define
\[
\alpha_\gamma (Y\stackrel{f}{\to}X)
= (f^*\xi'\stackrel{f^*\gamma}{\to}f^*\xi).
\]
Then $\alpha_\gamma$ is a natural transformation from $F_{\xi'}$ to
$F_\xi$ with $\alpha_\gamma (id_X) = id_X^*\gamma = \gamma$.
Moreover $\alpha_\gamma$ is unique: if $\beta:\under{X}_0 \to \sfD_1$ is another natural transformation from $F_{\xi'}$ to
$F_\xi$ then for any $(Y\stackrel{f}{\to}X)\in \under{X}_0$ the diagram
\begin{equation}\label{eq**}
\xy
(-8,6)*+{f^*\xi'= F_{\xi'}(f)}="1"; (-8,-6)*+{f^*\xi = F_\xi (f)}="2";
(12,6)*+{\xi'} ="3";(12,-6)*+{\xi} ="4";
{\ar@{->}^{\beta(f)} "1";"2"};
{\ar@{->} "2";"4"};{\ar@{->} "1";"3"};{\ar@{->}_{\gamma} "3";"4"};
\endxy
\end{equation}
commutes in $\sfD$.  Since $\beta(f) \in \sfD(Y)_1$, $\pi_\sfD
(\beta(f)) = id_Y$.  Therefore $\pi_D$ takes the diagram (\ref{eq**})  to
\[
\xy
(0,6)*+{Y}="1"; (0,-6)*+{Y}="2";
(12,6)*+{X} ="3";(12,-6)*+{X} ="4";
{\ar@{->}_{id_Y} "1";"2"};
{\ar@{->} "2";"4"};{\ar@{->} "1";"3"};{\ar@{->}^{id_X} "3";"4"};
\endxy .
\]
By construction $\pi_\sfD$ also maps $\alpha_\gamma (f):f^*\xi' \to f^*\xi$ to $id_Y$ and makes
\[
\xy
(0,6)*+{f^*\xi'}="1"; (0,-6)*+{f^*\xi}="2";
(12,6)*+{\xi'} ="3";(12,-6)*+{\xi} ="4";
{\ar@{->}_{\alpha_\gamma (f)} "1";"2"};
{\ar@{->} "2";"4"};{\ar@{->} "1";"3"};{\ar@{->}^{\gamma} "3";"4"};
\endxy
\]
commute.  By (\ref{def:CFG}) we must have
\[
\alpha_\gamma (f) = \beta (f).
\]
Therefore $\Theta $ is fully faithful.
\end{proof}

\subsection{Atlases}

One last idea that we would like to describe in this fast introduction
to stacks is a way of determining a condition for a stack to be
isomorphic to a stack $BG$ for some Lie groupoid $G$.  This involves
the notion of an atlas, which, in turn, depends on a notion of a fiber
product of categories fibered in groupoids.
\begin{definition}
Let $\pi_X:X\to \sfC$, $\pi_Y:Y\to \sfC$ and $\pi_Z: Z\to \sfC$ be
three categories fibered in groupoids over a category $\sfC$.
The {\bf 2-fiber product} $Z\times_{X}Y$ of the diagram
  $\vcenter{\xymatrix@=12pt@M=10pt{
                     &  Y \ar[d]^f  \\
            Z  \ar[r]_(0.47)g        &     X      }}$
is  the category with objects
\[
(Z\times _X Y)_0=\left\{
              (y,z,\alpha ) \in Y_0 \times Z_0\times X_1 \mid
                                      \pi_{Y}(y)=\pi_{Z}(z), \,
                       f(y) \stackrel{\alpha}{\to} g(z)
                                 \right\}
\]
and morphisms
\begin{align*}
 &\Hom_{Z\times_{X}Y}\big( (z_1,y_1,\alpha_1),(z_2,y_2,\alpha_2) \big)=\\
&\left\{ (z_1\stackrel{v}{\to } z_2, y_1\stackrel{u}{\to} y_2)
\quad \left| \quad
                     \pi_{Y}(u)=\pi_{Z}(v) \in \sfC_1,
        \vcenter{ \xymatrix@R=8pt@C=10pt{
         f(y_1) \ar[r]^{f(u)}  \ar[d]_{\alpha_1} \ar@{}[rd]|{\circlearrowleft}
                                              &  f(y_2) \ar[d]^{\alpha_2} \\
                            g(z_1)  \ar[r]_{g(v)}  &  g(z_2)   }}\in X_1
\right . \right\}
\end{align*}
together with the functor $\pi: Z\times_{X}Y \to \sfC$ defined by
\[
\pi ((z, y,\alpha)) = \pi_Z (z) = \pi_Y (y), \quad
\pi (v,u) = \pi_Z (v) = \pi_Y (u)
\]
\end{definition}
\begin{remark}
It is not hard but tedious to check that $Z\times _X Y\to \sfC$ is a category
fibered in groupoids.
\end{remark}

\begin{remark}
There are two evident maps of CFGs $\pr_1: Z\times _X Y \to Z$ and $\pr_2:
Z\times _X Y \to Y$, but the diagram
$\vcenter{\xy
(0,6)*+{Z\times _X Y}="1"; (0,-6)*+{Z}="4";
(19,6)*+{Y} ="2";(19,-6)*+{X} ="3";
{\ar@{->}^{\pr_2} "1";"2"};
{\ar@{->}^{\pr_1} "1";"4"};
{\ar@{->}^{g} "2";"3"};{\ar@{->}_{f} "4";"3"};
\endxy}
$ does not strictly speaking commute.  Rather there is a natural
isomorphism $g\circ\pr_2 \Rightarrow f\circ \pr_1$ which need
not be the identity.
\end{remark}
\begin{remark} The fiber product $Z\times_{f,X,g}Y$ is characterized by the
following universal property: For any category fibered in groupoids $W$, 
there is a  natural equivalence of categories
\begin{align*}
\Hom(W, Z\times_X Y) &\to \\
&\{ (u,v,\alpha) \mid u:W\to Z, v:W\to Y 
\textrm{functors, } \alpha: u \Rightarrow v \textrm{ natural isomorphism }\};
\end{align*}
it sends a functor $h:W \to Z\times_X Y$ to the pair of functor
$h\circ \pr_1$, $h\circ \pr_2$ and the natural isomorphism between
them.
\end{remark}

\begin{example}
Let $G$ be a groupoid and $p:\underline{G_0} \to BG$ be the map of
CFGs defined by the canonical principal $G$-bundle $t:G_1 \to G_0$
($G$ acts on $G_1$ by multiplication on the right).  Then for any map
$f:\underline{M}\to BG$ from (the stack defined by) a manifold $M$ to
the stack $BG$, the fiber product $\underline{M}\times_{f, BG, p}
\underline{G_0}$ is (isomorphic to) $\underline{P_f}$, where $P_f\to M$
is the principal $G$-bundle corresponding to the map $f$ by 2-Yoneda.

\begin{proof}
We sort out what the objects of $\underline{M}\times_{f, BG, p}
\underline{G_0}$ are leaving the morphism as an exercise to the reader.
Fix a manifold $Y$.  The objects of the fiber $\underline{M}\times_{f,
BG, p} \underline{G_0} (Y)$ are triples $(z,y, \alpha)$ where $z\in
\under{M}(Y)_0$, $y\in \under{G_0}(Y)$ and $\alpha$ is an arrow in
$BG(Y)$ from $f(z)$ to $p(y)$.  The objects of $\under{M}(Y)$ are maps
of manifolds $Y\stackrel{k}{\to}M$.  The image
$f(Y\stackrel{k}{\to}M)$ of such an object is a principal $G$-bundle
over $Y$.  By 2-Yoneda this bundle is $k^*P_f$ (recall that $P_f =
f(id_M)\in BG (M)$).  Similarly $p(Y\stackrel{\ell}{\to}G_0) = \ell^*
(G_1\to G_0)$.  Finally $\alpha: f(Y\stackrel{k}{\to}M) \to
p(Y\stackrel{\ell}{\to}G_0)$ is an arrow in the category $BG(Y)$.
That is, $\alpha: k^*P_f \to \ell^* (G_1\to G_0)$ is an isomorphism of
two principal $G$-bundles over $Y$.  Note that since $G_1\to G_0$ has
a global section, the pullback $\ell^*(G_1\to G_0)$ also has a global
section.  And the isomorphism $\alpha\inv: \ell^*(G_1\to G_0) \to
k^*P_f$ is uniquely determined by the image of this global section.
Hence the objects of $\underline{M}\times_{f, BG, p} \underline{G_0}
(Y)$ are pairs (pullback to $Y$ of $P_f\to M$, global section of the
pullback).  A global section of $k^*P_f \to Y $ uniquely determines a
map $\sigma :Y\to P_f$ making the diagram
\[
\xy
(0,0)*+{Y}="1"; (12,12)*+{P_f}="2";
(12,0)*+{M} ="3";
{\ar@{->}^{\sigma } "1";"2"};
{\ar@{->}_{k} "1";"3"};{\ar@{->} "2";"3"};
\endxy
\]
commute.  Therefore objects of $\underline{M}\times_{f, BG, p} \underline{G_0}
(Y)$ ``are'' maps from $Y$ to $P_f$.

Unpacking the definitions further one sees that
$\underline{M}\times_{f, BG, p} \underline{G_0}$ is isomorphic to
$\under{P_f}$ as a category fibered in groupoids, where by
``isomorphic'' we mean ``equivalent as a category.''
\end{proof}
\end{example}

\begin{remark}
The map of manifolds $P_f \to M$ in the construction above is a
surjective submersion.  Therefore we may think of $\under{G_0}
\stackrel{p}{\to} BG$ as a surjective submersion.
\end{remark}

\begin{remark}
To keep the notation from getting out of control we now drop the
distinction between a manifold $M$ and the associated stack
$\under{M}$.  We will also drop the distinction between stacks
isomorphic to manifolds and manifolds.  Thus, in the example above we
would say that for any Lie groupoid $G$, any manifold $M$ and any map
$M\to BG$ the fiber product $M\times _{BG} G_0$ is a manifold.
\end{remark}

\begin{definition}[Atlas of a stack]
Let $\sfD\to \Man$ be a stack over the category manifolds.  An {\em
atlas} for $\sfD$ is a manifold $X$ and a map $p:X\to \sfD$ such that
for any map $f:M\to \sfD$ from a manifold $M$ the fiber product
$M\times _{f,D,p} X$ is a manifold and the map $\pr_1:M\times _{f,D,p}
X \to M$ is a surjective submersion.
\end{definition}

\begin{remark}
A stack over manifolds which possesses an atlas is alternatively
refered to as a {\em geometric } stack, {\em differentiable } stack or
an {\em Artin} stack.
\end{remark}
\begin{example}
Let $M$ be a manifold $\cU = \bigsqcup U_i \to M$ be a cover by
coordinate charts.  Then the map of stacks $p: \under{\cU} \to
\under{M}$ is an atlas.
\end{example}
\begin{example}
For any Lie groupoid $G$ the canonical map $p:G_0 \to
BG$ sending $id_{G_0}$ to the principal $G$-bundle $G_1 \to G_0$ is an
atlas.
\end{example}

\begin{proposition} \label{prop:bg-from-atlas}
Given a stack with an atlas $p:X\to \sfD$ there is a Lie groupoid $G$
such that $\sfD$ is isomorphic to $BG$.  Moreover we may take $G_0 =
X$ and $G_1 = X\times_{p, \sfD,p} X$.
In other words any geometric  stack $\sfD$ is $BG$ for some Lie groupoid $G$.
\end{proposition}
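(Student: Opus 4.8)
The plan is to build the groupoid $G$ out of the atlas, produce a comparison $1$-morphism of stacks $\Phi\colon\sfD\to BG$, and show it is an equivalence of categories --- which, by Definition~\ref{def:mapCFG}, is exactly what ``isomorphism of stacks'' means. For the construction of $G$, set $G_0=X$ and $G_1=X\times_{p,\sfD,p}X$. Applying the atlas hypothesis to the map $p\colon X\to\sfD$ itself (i.e.\ with $M=X$, $f=p$) shows $G_1$ is a manifold and $\pr_1\colon G_1\to X$ is a surjective submersion; interchanging the two factors (and inverting the connecting $2$-cell of the fibre product) gives the same for $\pr_2$. Take $s=\pr_1$ and $t=\pr_2$; informally a point of $G_1$ is a triple $(x,x',\beta\colon p(x)\xrightarrow{\sim}p(x'))$. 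The unit $u\colon X\to G_1$ is the $1$-morphism $\under X\to\under X\times_\sfD\under X$ obtained from the universal property of the $2$-fibre product applied to the pair $(\mathrm{id}_{\under X},\mathrm{id}_{\under X})$ together with the identity $2$-isomorphism $\mathrm{id}_p\colon p\Rightarrow p$; concretely $x\mapsto(x,x,\mathrm{id})$. The multiplication $m\colon G_1\times_{s,X,t}G_1\to G_1$ (its domain is a manifold, being a fibre product of manifolds along the submersion $s$) is the $1$-morphism that composes the connecting $2$-cells, $((x',x'',\beta'),(x,x',\beta))\mapsto(x,x'',\beta'\circ\beta)$; the inversion sends $(x,x',\beta)$ to $(x',x,\beta\inv)$, which is legitimate since the fibres of $\sfD$ are groupoids. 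Each of these is a $1$-morphism between CFGs that are representable by manifolds, hence by Corollary~\ref{cor:maps-of-manifolds-as-stacks} a genuine smooth map, and the groupoid axioms reduce to associativity and unitality of composition of arrows in the fibres of $\sfD$. Thus $G$ is a Lie groupoid with the asserted $G_0$ and $G_1$.

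Next I would set up the comparison functor. Let $\xi_0:=p(\mathrm{id}_X)\in\sfD(X)_0$, so by the $2$-Yoneda lemma $p$ is the functor $h\mapsto h^*\xi_0$. For $\xi\in\sfD(M)_0$, corresponding to $f_\xi\colon\under M\to\sfD$, put
\[
\Phi(\xi):=M\times_{f_\xi,\sfD,p}X.
\]
By the atlas hypothesis this is a manifold and $\pr_1\colon\Phi(\xi)\to M$ is a surjective submersion. It carries an action of $G$ built from the composition law of $G_1$: informally a point of $\Phi(\xi)$ is a triple $(m',x,\alpha\colon f_\xi(m')\xrightarrow{\sim}p(x))$, with anchor $(m',x,\alpha)\mapsto x$, and an element $(x,x',\beta)$ of $G_1$ sends it to $(m',x',\beta\circ\alpha)$. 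This action is free and transitive on the fibres of $\pr_1$ (for two triples over the same point of $M$ the unique element carrying the first to the second has connecting isomorphism $\alpha_2\circ\alpha_1\inv$), so $\Phi(\xi)\to M$ is a principal $G$-bundle. A morphism $\xi\to\eta$ in $\sfD$ induces, by functoriality of the $2$-fibre product, a $G$-equivariant map $\Phi(\xi)\to\Phi(\eta)$ over the underlying map of manifolds, so $\Phi\colon\sfD\to BG$ is a functor commuting with the projections to $\Man$, i.e.\ a $1$-morphism in $\St$.

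It remains to prove $\Phi$ is an equivalence. Since $\Phi$ is a $1$-morphism of CFGs (hence preserves pullbacks), it suffices to show the induced functor on fibres $\Phi_M\colon\sfD(M)\to BG(M)$ is an equivalence of categories for every manifold $M$, and for this I would exhibit a quasi-inverse $\Psi_M$. Given a principal $G$-bundle $Q\xrightarrow{\pi}M$ with anchor $a\colon Q\to X=G_0$, the composite $p\circ\under a$ is by $2$-Yoneda the object $\zeta_Q:=a^*\xi_0\in\sfD(Q)_0$. The division map $d\colon Q\times_M Q\to G_1$ from the proof of Lemma~\ref{lem:princH-global} (characterized by $q_2=q_1\cdot d(q_1,q_2)$) satisfies $\pr_1\circ d=a\circ\pr_2$ and $\pr_2\circ d=a\circ\pr_1$ on $Q\times_M Q$; pulling back along $d$ the tautological isomorphism $\pr_1^*\xi_0\simeq\pr_2^*\xi_0$ on $G_1$ --- namely the connecting $2$-cell of the $2$-fibre product defining $G_1$ --- produces an isomorphism $\phi\colon\pr_2^*\zeta_Q\simeq\pr_1^*\zeta_Q$ on $Q\times_M Q$, whose cocycle condition over $Q\times_M Q\times_M Q$ follows from the multiplicativity $d(q_1,q_3)=d(q_1,q_2)\,d(q_2,q_3)$ of the division map together with associativity of composition in the fibres of $\sfD$. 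Since $Q\to M$ is a surjective submersion it admits local sections, so this descent datum restricts to one along an honest open cover of $M$; because $\sfD$ is a stack, $(\zeta_Q,\phi)$ descends to an object $\Psi_M(Q)\in\sfD(M)_0$, functorially in $Q$. Finally one checks $\Phi_M\circ\Psi_M\cong\mathrm{id}_{BG(M)}$ and $\Psi_M\circ\Phi_M\cong\mathrm{id}_{\sfD(M)}$: both round trips rebuild the same object equipped with the same gluing data, the first because the $G$-bundle structures on $\Phi_M(\Psi_M(Q))\to M$ and on $Q\to M$ are canonically matched by the division map, the second by uniqueness in the descent property of $\sfD$.

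I expect the main obstacle to be the last paragraph: first, verifying carefully that the $G$-action on $M\times_{\sfD}X$ is genuinely principal, i.e.\ that $(p,h)\mapsto(p,p\cdot h)$ is a diffeomorphism onto $\Phi(\xi)\times_M\Phi(\xi)$ --- this is precisely where the atlas hypothesis that $\pr_1$ is a surjective submersion (so that the relevant fibre products are manifolds) is used; and second, the bookkeeping of the descent argument for $\Psi_M$, in particular transporting descent along the surjective submersion $Q\to M$ to descent along an open cover and then confirming that $\Phi_M\Psi_M$ and $\Psi_M\Phi_M$ are naturally isomorphic to the identities. Everything else is formal manipulation with the $2$-Yoneda lemma and the universal property of the $2$-fibre product.
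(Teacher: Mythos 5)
Your construction of the Lie groupoid $G$ from the atlas and of the comparison functor $\Phi(\xi)=M\times_{f_\xi,\sfD,p}X$ coincides with the paper's, down to the use of the universal property of the $2$-fibre product for the structure maps, Corollary~\ref{cor:maps-of-manifolds-as-stacks} to recognize them as honest smooth maps, and the atlas hypothesis applied to $p$ itself to see that $G_1$ is a manifold with submersive source and target. Where you diverge is in the final step. The paper asserts that $\psi$ is fully faithful (explicitly waving its hands there) and then proves essential surjectivity by showing that the full subcategory $BG\triv$ of trivial bundles lies in the image and invoking the stack axiom to descend the local preimages of an arbitrary $P$. You instead build an explicit fibrewise quasi-inverse $\Psi_M$: pull $\xi_0=p(\mathrm{id}_X)$ back along the anchor $a\colon Q\to X$, use the division map of Lemma~\ref{lem:princH-global} to transport the tautological $2$-cell of $X\times_\sfD X$ into a descent datum on $Q\times_M Q$, push it down to an open cover via local sections of the submersion $Q\to M$, and descend. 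The ingredients are the same ($2$-Yoneda, the stack axiom, local triviality of principal $G$-bundles), but your packaging subsumes the full-faithfulness claim --- which the paper leaves unproved --- inside the verification that the two round trips are naturally isomorphic to the identities; the cost is that those round-trip checks, which you correctly flag as the main remaining work, now carry the burden the paper distributes between full faithfulness and essential surjectivity. Both arguments are sketches at a comparable level of rigor, and I see no step in yours that would fail.
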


It is relatively easy to produce the groupoid $G$ out of the atlas
$p:X\to \sfD$.  It is more technical to define a map of stacks
$\psi:\sfD \to BG $ and to prove that it is an isomorphism of stacks
(that is, prove that $\psi$ is an equivalence of categories commuting
the projections $\pi_{BG}:BG \to \Man$ and $\pi_\sfD:\sfD \to \Man$).
We will only sketch its construction and refer the reader to stacks
literature for a detailed proof.  The reader may consult, for example,
\cite{Metzler}[Proposition~70].

\begin{proof}[Sketch of proof of Proposition~\ref{prop:bg-from-atlas}]
We first construct a Lie groupoid out of an atlas on a stack.  Let
$\sfD$ be a stack over manifolds and $p:{G_0}\to \sfD$ an atlas.
Then the stack ${G_0}\times _{p, \sfD, p} {G_0}$ is a
manifold; call it $G_1$.  We want to produce the five structure maps:
source, target $s,t :G_1 \to G_0$, unit $u: G_0 \to G_1$, inverse
$i:G_1\to G_1$ and multiplication $m: G_1 \times _{G_0} G_1 \to G_1$
satisfying the appropriate identities.  We will produce five maps of
stacks.  By Corollary~\ref{cor:maps-of-manifolds-as-stacks} this is
enough.  We take as source and target the projection maps $\pr_1,
\pr_2:
{G_0}\times _{p, \sfD, p} {G_0} \to {G_0}$.  
Since the diagram 
\[
\xy (0,6)*+{{G_0}}="1"; (0,-6)*+{{G_0}}="2"; 
(12,6)*+{{G_0}} ="3";(12,-6)*+{\sfD} ="4"; {\ar@{->}_{id}
"1";"2"}; {\ar@{->}^{id} "1";"3"}; {\ar@{->}_{p}
"2";"4"};{\ar@{->}_{p} "3";"4"};
\endxy
\]
commutes, there is a unique map of stacks $u: {G_0} \to
{G_0}\times _{p, \sfD, p} {G_0}$.  Concretely, on objects,
it sends $x\in {G_0}$ to $(x,x, id_{p(x)})$.  We also have the
multiplication functor
\[
m: ({G_0}\times _{p, \sfD, p} {G_0})\times _{{G_0}}
({G_0}\times _{p, \sfD, p} {G_0} ) \to 
({G_0}\times _{p, \sfD, p} {G_0}),
\]
which on objects is given by composition:
\[
 m ((x_1, x_2, \alpha), (x_2, x_3, \beta)) = (x_1, x_3, \beta \alpha).
\]
It is easy to see that the multiplication is associative.
Finally the inverse map 
\[
inv :{G_0}\times _{p, \sfD, p} {G_0} \to 
{G_0}\times _{p, \sfD, p} {G_0}
\]
is given, on objects, by
\[
inv (x_1, x_2, \alpha) = (x_2, x_1, \alpha\inv).
\]

Note that the construction above does not use the descent properties
of $\sfD$.  That is, we could have just as well defined an atlas for a
category fibered in groupoids.  The construction would then still
produce a Lie groupoid.

Next we sketch a construction of a map $\psi: \sfD \to BG$ of CFGs.
It will turn out to be a fully faithful functor.  We will only need
the fact that $\sfD$ is a stack to prove that $\psi$ is essentially
surjective.

By 2-Yoneda, an object of $\sfD$ over a manifold $M$ is a map of CFGs
$f:M\to \sfD$.  Since $p:X\to \sfD$ is an atlas, the fiber product
$M\times _\sfD X$ is a manifold and the map $\pr_1:M\times _\sfD X \to
M$ is a surjective submersion.  There is a free and transitive action
of $G$ on the fibers of $\pr_1$ with respect to the anchor map $\pr_2
M\times _\sfD X \to X = G_0$ (once again we identify manifolds with
the corresponding stacks).  The right action of $G$ is given by the
``composition''
\begin{align*}
(M\times _\sfD X)\times _X (X \times _\sfD X)& \to M\times _\sfD X\\
((x_1, x_2, \alpha), (x_2, x_3, \beta ) &\mapsto (x_1, x_3, \beta \alpha)
\end{align*}
(following the tradition in the subject we only wrote out the map on objects).
It is free and transitive since the map
\begin{align*}
(M\times _\sfD X)\times _X (X \times _\sfD X)& \to (M\times _\sfD X) \times _M 
(M\times _\sfD X) \\
((x_1, x_2, \alpha), (x_2, x_3, \beta )) &\mapsto ((x_1,  x_2, \alpha), 
(x_2, x_3, \beta \alpha))
\end{align*}
is an isomorphism of stacks.  Thus 
\[
 \psi (f:M\to D) = (\pr_1: M\times _{f, \sfD, p} X \to M).
\]
Next we define $\psi$ on arrows.  An arrow from $f_1:M_1 \to \sfD$ to
$f_2: M_2 \to \sfD$ is a 2-commuting triangle $\vcenter{\xy
(-8,8)*+{M_1}="1"; (-8,-8)*+{M_2}="2"; (8,0)*+{\sfD}="3";
{\ar@{->}_{h} "1";"2"}; {\ar@{->}^{f_1} "1";"3"};{\ar@{->}_{f_2}
"2";"3"};
 {\ar@{=>}^<<<{} (-3,-2)*{};(0,1)*{}} ;
\endxy }$ (this can be proved more or less the same way as we proved 2-
Yoneda).  Since the diagram
$\vcenter{
\xy
(-8,8)*+{M_1 \times _\sfD X }="1"; (-8,-8)*+{M_1}="2"; (8,8)*+{X}
="3"; (8,-8)*+{\sfD}="4"; {\ar@{->}_{h\circ \pr_1 } "1";"2"};
{\ar@{->}^{} "1";"3"};{\ar@{->}_{f_1} "2";"4"}; {\ar@{->}^p "3";"4"}
;{\ar@{=>}^<<<{} (-2,-2)*{};(1,1)*{}} ;
\endxy}$
2-commutes, we get, by the universal property of the 2-fiber product, a map
\[
\tilde{h}: M_1 \times _\sfD X \to M_2 \times _\sfD X 
\]
making the diagram
\[
\xy
(-12,8)*+{M_1 \times _\sfD X }="1"; (-12,-8)*+{M_1}="2"; 
(12,8)*+{M_2 \times _\sfD X} ="3";
(12,-8)*+{M_2}="4";
{\ar@{->}_{ } "1";"2"}; {\ar@{->}^{\tilde{h}} "1";"3"};{\ar@{->}_{h}
"2";"4"}; {\ar@{->} "3";"4"} ;
\endxy
\]
2-commute. And since all the objects in the diagram are manifolds, it
actually commutes on the nose.  It is not hard to check that
$\tilde{h}$ is compatible with the action of $G$.  This defines $\psi$
on arrows and gives us a functor
\[
\psi : \sfD \to BG.
\]
One checks that $\psi$ is fully faithful (I am waving my hands here).

Next we argue that the full subcategory $BG\triv$ of $BG$ consisting
of the trivial bundles is in the image of $\psi$.  A trivial $G$
bundle on a manifold $M$ is the pull back of the unit $G$-bundle $G_1
\to G_0 = X$ by a map $k:M\to X$.  The diagram $\vcenter{\xy
(-8,0)*+{M}="1"; (8,8)*+{X}="2"; (8,0)*+{\sfD}="3"; {\ar@{->}^{k}
"1";"2"}; {\ar@{->}_{p\circ k} "1";"3"};{\ar@{->}^{p} "2";"3"}; \endxy
}$ commutes by definition.  Hence $\pr_1: M\times _\sfD X \to M$ has a
global section $\sigma $ with $\pr_2 \circ \sigma =k$.  Therefore
$M\times _\sfD X \to M$ is isomorphic to $k^* (G_1 \to G_0) \to M$.
That is,
\[
   \psi (p\circ k) \simeq k^*(G_1\to G_0).  
\]
Similarly if 
\[
\xy (-8,8)*+{M_1}="1"; (-8,-8)*+{M_2}="2";
(8,0)*+{X}="3"; {\ar@{->}_{h} "1";"2"}; {\ar@{->}^{ k_1}
"1";"3"};{\ar@{->}_{ k_2} "2";"3"}; 
\endxy
\]
is a commuting diagram of maps of manifolds, then 
\[
\xy (-8,8)*+{M_1}="1"; (-8,-8)*+{M_2}="2";
(8,0)*+{\sfD}="3"; {\ar@{->}_{h} "1";"2"}; {\ar@{->}^{p\circ k_1}
"1";"3"};{\ar@{->}_{p\circ k_2} "2";"3"}; 
\endxy
\]
is a commuting triangle of maps of CFGs, i.e., a map between two objects in $\sfD$.  One checks that $\psi (\vcenter{
\xy (-8,8)*+{M_1}="1"; (-8,-8)*+{M_2}="2";
(8,0)*+{\sfD}="3"; {\ar@{->}_{h} "1";"2"}; {\ar@{->}
"1";"3"};{\ar@{->} "2";"3"}; {\ar@{=>}^<<<{} (-3,-2)*{};(0,1)*{}} ;
\endxy})$ is the map $\tilde{h} : k_1^* (G_1 \to G_0) \to k_2^* (G_1 \to G_0)$.
Thus the image of $\psi $ includes the full subcategory $BG\triv$ of
trivial bundles.

Finally we use the fact that $\sfD$ is a stack to argue that $\psi$ is
essentially surjective.  If $P\to M$ is a principal $G$-bundle, then
$M$ has an open cover $\{U_i\to M\}$ so that the restrictions
$P|_{U_i}$ have global sections.  Then for each $i$ there is $\xi_i
\in D(U_i)_0$ with $\psi (\xi_i)$ isomorphic to $P|_{U_i}$.  The cover
also defines descent data $(\{P|_{U_i}\}, \{ \phi_{ij}\})$.  These
descent data really live in $BG\triv$.  Hence, since the image of
$\psi$ contains $BG\triv$ and since $\psi$ is fully faithful,
$(\{P|_{U_i}\}, \{ \phi_{ij}\})$ defines descent data $(\{ \xi_i\},
\{\psi\inv( \phi_{ij})\})$ in $\sfD$.  Since $\sfD$ is a stack, these
descent data defines an object $\xi$ of $\sfD (M)$.  Since $\psi$ is a
functor, $\psi (\xi)$ is isomorphic to $P$.  We conclude that
$\psi:\sfD \to BG$ is essentially surjective.
\end{proof}

\begin{remark}
Atlases of geometric stacks are not unique.  For example, if $p:X\to
\sfD$ is an atlas and $f:Y\to X$ is map of manifolds which  is a
surjective submersion, then $p\circ f:Y\to \sfD$ is also an atlas.
However, if $p:{G_0} \to \sfD$ and $q: {H_0}\to \sfD$ are two atlases,
then by Proposition~\ref{prop:bg-from-atlas}, the stacks $BG$ and $BH$
are isomorphic.  It is not hard to construct an invertible bibundle
$P:G\to H$ explicitly: $P$ is the fiber product $G_0 \times_{, \sfD,
q} H_0$.  The actions of $G$ and $H$ are defined as in the proof of
Proposition~\ref{prop:bg-from-atlas} and they are both principal.  

It is useful to think of these two atlases and of the two
corresponding Lie groupoids as two choices of ``coordinates'' on the
stack $\sfD$.
\end{remark}

\begin{remark}
In the light of the above remark it makes sense to say that {\bf a
geometric stack} $\sfD\to \Man$ is {\bf an orbifold} if there is an
atlas $p:X\to \sfD$ so that the corresponding groupoid $X\times _\sfD
X \toto X$ is a proper etale Lie groupoid.
\end{remark}

\end{document}